\documentclass[11pt]{amsart}
\usepackage{amsmath, amsthm, amssymb}
\usepackage{amsfonts}
\usepackage{mathtools}

\usepackage[cal=boondox]{mathalfa}
\usepackage{bbm}
\usepackage{mathrsfs}
\usepackage{stmaryrd}
\usepackage{esint}
\usepackage[T1]{fontenc}
\usepackage{lmodern}

\usepackage{graphicx}
\usepackage{tikz}
\usepackage{tikz-cd}
\usetikzlibrary{arrows}
\usepackage[all,cmtip]{xy}
\usepackage{amscd}
\usepackage{picinpar}

\usepackage{enumerate}
\usepackage{enumitem}
\usepackage{comment}
\usepackage{array}
\usepackage{extarrows}
\usepackage{verbatim}
\usepackage{calc}
\usepackage{xcolor}

\usepackage{hyperref}

\numberwithin{equation}{section}

\theoremstyle{plain}
\newtheorem{thm}{Theorem}
\numberwithin{thm}{subsection}
\newtheorem{cor}[thm]{Corollary}
\newtheorem{lem}[thm]{Lemma}
\newtheorem{prop}[thm]{Proposition}

\newtheorem{mainthm}{Theorem}

\theoremstyle{definition}

\theoremstyle{remark}
\newtheorem{rem}[thm]{Remark}

\DeclareMathOperator{\Id}{Id}

\DeclareMathOperator{\Sing}{Sing}

\DeclareMathOperator{\Spec}{Spec}

\DeclareMathOperator{\Crit}{Crit}

\def\bC{{\mathbb{C}}}
\def\bD{{\mathbb{D}}}

\def\bZ{{\mathbb{Z}}}

\def\bP{{\mathbb{P}}}

\def\cF{{\mathscr{F}}}
\def\cH{{\mathscr{H}}}

\def\cJ{{\mathscr{J}}}

\def\cM{{\mathscr{M}}}

\def\cO{{\mathscr{O}}}

\def\cR{{\mathscr{R}}}

\def\fm{{\mathfrak{m}}}

\title{Postcritically finite endomorphisms II: Fatou sets}

\author{Ruiran Sun}
 \address{School of Mathematical Sciences, Xiamen University, Xiamen 361005, China}
\email{ruiransun@xmu.edu.cn}
\author{Shengyuan Zhao}
\address{Universit\'e Paul Sabatier, Institut de Math\'ematiques de Toulouse, 118, route de Narbonne, F-31062 Toulouse, France}
\email{shengyuan.zhao@math.univ-toulouse.fr}

\begin{document}
\begin{abstract}
Let $f$ be a postcritically finite endomorphism of the complex projective space $\mathbb{P}^N$ of degree at least two. We prove that every Fatou component of $f$ is preperiodic to the basin of a superattracting periodic orbit.
\end{abstract}
\maketitle

\tableofcontents

\section{Introduction}
Let $N\geq 1$. Let $f:\bP^N\to\bP^N$ be a holomorphic endomorphism of degree larger than one. Its critical locus and postcritical set are
\[
\Crit(f):=\{x\in\bP^N\mid\operatorname{rank}(d_xf)<N\}, \quad P_f:=\bigcup_{n\geq1}f^n(\Crit(f)).
\]
We say that $f$ is \emph{postcritically finite}, or \emph{PCF}, if $P_f$ is algebraic, i.e.\ every irreducible component of the divisor $\Crit(f)$ is preperiodic. Examples include the coordinate power maps
\[
[z_0:\cdots:z_N]\longmapsto[z_0^d:\cdots:z_N^d],
\qquad d\geq 2,
\]
whose postcritical divisors are the coordinate hyperplanes. Further higher dimensional examples include symmetric products or suspensions of lower dimensional PCF maps (see \cite{Ueda1993,GauthierHutzKaschner2023}), Lattès maps (see \cite{BertelootLoeb2001,Dupont2003,Rong2010Lattes}), certain maps preserving fibrations or webs (\cite{Rong2008,DabijaJonsson2010}), a few examples by Crass~\cite{Crass2007Klein,Crass2011Valentiner} and a construction by Koch~\cite{Koch2013} based on Teichmüller theory that englobes many other special examples (see \cite{FornaessSibony1992,Ueno2007Symmetric,BelkKoch2010,BuffEpsteinKoch2012,Ingram2020MinimallyCritical})\footnote{The authors used AI to find a few new examples, see our personal pages}. Gauthier--Taflin--Vigny~\cite{GTV} proved that in the space of all endomorphisms of a fixed degree the PCF ones do not form a Zariski-dense subset. 

The \emph{Fatou set} $\cF(f)$ is the largest open subset of $\bP^N$ on which the iterates $(f^n)$ form a normal family. Its connected components are called the \emph{Fatou components}. Its complement is the \emph{Julia set} $\cJ(f)$. A Fatou component is \emph{preperiodic} if some forward image is periodic, and is \emph{wandering} otherwise. If a periodic point $p$ has minimal period $r$, its \emph{first-return map} is the germ of $f^r$ at $p$, and its \emph{multipliers} are the eigenvalues of $d_p(f^r)$. We call $p$ or the orbit of $p$ \emph{superattracting} if this differential is nilpotent, or equivalently if all its multipliers vanish. 

For a superattracting cycle $\mathcal O=\{p,f(p),\ldots,f^{r-1}(p)\}$, its \emph{basin} is
\[
\mathcal B(\mathcal O)
:=\{z\in\bP^N\mid\operatorname{dist}(f^n(z),\mathcal O)\longrightarrow0\},
\]
where the distance is taken with respect to the Fubini--Study metric. The \emph{immediate basin} at a point $p\in\mathcal O$ is the connected component containing $p$ of the set of points converging to $p$ under iteration of $f^r$.

A classical theorem in one-dimensional complex dynamics states that \emph{the Fatou set of a PCF rational function consists entirely of basins of superattracting cycles.} One way to obtain this statement is to apply Sullivan's classification of Fatou components~\cite{Sullivan} (see Milnor's book~\cite[Cor.~14.5, \S\,15, Thms.~16.1 and~16.4]{Milnor2006} for details). In particular, if there is no superattracting cycle, the Julia set is the whole Riemann sphere~\cite[Cor.~16.5]{Milnor2006}. We prove that the same description of the Fatou set holds \emph{in every dimension}:

\begin{mainthm}\label{mainthm:Fatou}
Let $f:\bP^N\to\bP^N$ be a PCF endomorphism of degree at least two. Then every Fatou component of $f$ is contained in the basin of a cycle of superattracting periodic points. Every Fatou component is preperiodic to an immediate basin of that cycle.
\end{mainthm}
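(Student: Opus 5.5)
The plan is to run a higher-dimensional version of the one-variable argument. First I show that every limit of iterates on a Fatou component takes values in the postcritical set $P_f$. Then I cut down along the periodic components of $P_f$ until the limit maps become constant. Finally I show that the resulting attracting cycles are superattracting.

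Let $U$ be a Fatou component, and let $h=\lim f^{n_k}|_U$ be a limit map; one exists by normality. The first step is to show $h(U)\subset P_f$. Since $f^{-1}(P_f)\supseteq P_f\cup\Crit(f)$, the map $f^n:\bP^N\setminus f^{-n}(P_f)\to\bP^N\setminus P_f$ is an unramified covering of degree $d^{Nn}$ for every $n$. Suppose $h(z_0)=w_0\notin P_f$. Choose a small ball $B\ni w_0$ disjoint from $P_f$. For $k$ large, $f^{n_k}$ maps a fixed neighbourhood $W$ of $z_0$ into $B$. Because $B$ is simply connected, each $f^{n_k}$ admits $d^{Nn_k}$ univalent inverse branches on $B$, and one of them, $g_k$, satisfies $g_k\circ f^{n_k}=\Id$ on $W$. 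The branches $g_k$ avoid $P_f$. I will use a Kobayashi-type argument in the style of Ueda and Fornaess--Sibony: the inverse branches form a normal family on $B$, and since the total volume $\operatorname{vol}(\bP^N)$ is shared among $d^{Nn_k}$ disjoint images, their volumes tend to zero. Any limit of the $g_k$ is therefore degenerate, which is incompatible with $g_k\circ f^{n_k}=\Id$ on the open set $W$. Hence $h(U)\subset P_f$, and in fact $h(U)\subset V$ for some irreducible component $V$ of $P_f$, which we may take periodic after replacing $f$ by an iterate.

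The second step is an induction on dimension along $V$. Take the normalization $\nu:\tilde V\to V$ and lift $f^m|_V$ to $\tilde f:\tilde V\to\tilde V$. The restriction is again ``postcritically finite'' in the sense that its branch locus together with $V\cap(\text{other components of }P_f)$ is a finite union of preperiodic subvarieties. The family $\{f^{n}\circ h\}$ is normal with values in $V$. I will rerun the covering/volume argument on $\tilde V$ using the orbifold structure induced by $P_f$ to push the limit values into lower-dimensional periodic strata. After at most $N$ steps the limits are constant. This is the step I expect to be the main obstacle. The reason is that $\tilde V$ is no longer $\bP^k$, so the volume/degree growth of $\tilde f$ must be controlled: the topological degree of $\tilde f$ on $V$ should be $d^{\dim V}$ by restriction of $\cO(1)$, and I would try to make this work with currents/volumes pulled back from $\bP^N$ instead of intrinsic geometry. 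If this works, every limit map on $U$ is constant, and its value is a periodic point $p$ lying in $P_f$. Moreover, the limit values of the family $\{f^{n}|_U\}$ form a finite cycle $\mathcal O$ and $\operatorname{dist}(f^n(z),\mathcal O)\to0$ on $U$. This gives containment in the basin, and it also shows that $f^n(U)$ eventually lands in the immediate basin, so $U$ is preperiodic to it.

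The third step shows that $p$ is superattracting. Since $p$ is an attracting point of $f^r$, all multipliers have modulus $<1$. Suppose some multiplier $\lambda\neq0$ exists. Restrict to the locally invariant direction where $d_p(f^r)$ is invertible, using the Poincar\'e--Dulac/Rosay--Rudin normal form on the non-nilpotent part. There, $f^r$ is locally a biholomorphism, so its iterates have a nonconstant germ of local inverses along an attracting sub-basin. Points in this part of the basin converge to $p$ but have backward orbits staying near $p$. Repeating the covering argument of step one near $p$, now applied to points outside $P_f$ in the attracting basin, contradicts the volume shrinking. Hence every direction is critical, that is, $d_p(f^r)$ is nilpotent, and the cycle is superattracting.
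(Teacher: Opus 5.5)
Your outline (limits lie in $P_f$, cut down to constants, then exclude nonzero multipliers) has the same overall shape as the paper's argument. However, each step has a gap, and the gaps in Steps~2 and~3 sit exactly where the paper needs its new ideas.

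\emph{Steps 1 and 2.} In Step~1 the volume count proves nothing. Disjointness of the $d^{Nn_k}$ images bounds only the \emph{sum} of their volumes, whereas your particular branch satisfies $g_k(B)\supset W$, so its volume stays bounded below. What the argument really gives is a limit $g$ with $h\circ g=\Id$ near $w_0$, hence $f^{n_{k+1}-n_k}\to\Id$ near $w_0$. Excluding such identity returns is the actual content, supplied by Ueda's Proposition~\ref{prop:Fatou-limit-point} or by Lemma~\ref{lem:no-identity-returns} with $S=\bP^N$, which continues the inverse branches over a connected Zariski-open set and contradicts $\deg f>1$. So this step is repairable by citation.

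Step~2 is not repairable as written. The Step~1 mechanism needs univalent inverse branches over a fixed ball disjoint from $P_f$, and these are unavailable near points of $V\subset P_f$. Moreover $f^{n_k}$ does not map into $V$, and $h(U)$ may have dimension $<\dim V$. So $h$ is not a limit of iterates of $\tilde f$ on an open subset of $\tilde V$, and there is nothing there to rerun the argument on. The paper instead lifts arcs to a log resolution of $(\bP^N,P_f)$ and uses Ueda's inverse normality. This shows that a rank-$r$ limit has image Zariski-dense in an $r$-dimensional periodic variety (Proposition~\ref{prop:log-rank}); rank drop then follows by excluding identity returns on that variety (Lemma~\ref{lem:rank-drop}). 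Even so, this only shows that accumulation points of $f^n\circ h$ have smaller rank, i.e.\ that $\cH_U$ \emph{contains} a periodic constant (Theorem~\ref{thm:periodic-constant}). It never shows that $h$ itself is constant. Your claims that every limit is constant and that $\operatorname{dist}(f^n(z),\mathcal O)\to0$ are therefore unsupported.

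\emph{Step 3.} Convergence of an open set to $p$ only excludes multipliers of modulus $>1$, via the center-stable manifold theorem; it does not exclude multipliers of modulus one. So ``all multipliers $<1$'' is unjustified. More seriously, nothing local can exclude a nonzero multiplier of modulus $\le1$; think of $(x,y)\mapsto(x/2,y^2)$. Three things block a local argument:
\begin{enumerate}
\item the invariant germ tangent to the non-nilpotent directions is a priori only formal (Lemma~\ref{lem:invariant-series});
\item $p\in P_f$, so the Step~1 covering argument is again unavailable near $p$;
\item even for a convergent germ, the inverses of a contracting germ are defined only on shrinking neighbourhoods and push backward orbits away from $p$, so a volume argument has nothing to contradict.
\end{enumerate}

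The missing idea is the algebraization theorem (Theorem~\ref{thm:algebraic-core}). The invariant germ is a smooth branch of a global periodic algebraic subvariety $W$, and on the normalization of $W$ the branch loci of all iterates lie in one proper algebraic subset. Only then do the inverse branches fixing $p$ extend to a common neighbourhood and form a normal family (Lemma~\ref{lem:fixed-inverses}). When all multipliers have modulus $\le1$, this forces $\lvert\det\rvert=1$ on the core and then identity returns on $W$, contradicting the degree bound (Proposition~\ref{prop:replling-direction}).

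Because constancy of all limits is not available, the paper also proves transverse superattraction $(f^a)^*\mathcal I_W\subset\mathcal I_W^2$ along the core, using symbolic powers and Ein--Lazarsfeld--Smith. This captures every limit in $W$. The normal inverse branches on the core then make all limits constant. Only at that point does the expanding eigenvalue from the dichotomy contradict the center-stable manifold theorem (Proposition~\ref{prop:superattracting-limit}).
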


In particular, \emph{a PCF endomorphism of projective space has no wandering Fatou components}. Its Fatou set is empty if and only if it has no superattracting periodic point.

The higher-dimensional problem has a long history. Forn\ae ss and Sibony studied PCF maps on $\bP^2$ and established the description of their Fatou components under additional hypotheses on the critical orbits and on hyperbolicity~\cite[Thms.~7.7 and~7.8]{FornaessSibony}. Jonsson~\cite{Jonsson1998} studied some recursive conditions of PCF maps. Ueda~\cite{Ueda1998} introduced a general method based on normality of inverse lifts. Among its consequences are the absence of rotation domains for PCF endomorphisms and the containment of every Fatou limit image in the eventual postcritical set
\[
E_f:=\bigcap_{n\geq0}f^n(P_f).
\]
Here a \emph{Fatou limit map} is a locally uniform limit of a sequence $f^{n_j}$ on a Fatou component, with $n_j\to\infty$. A \emph{rotation domain} is a Fatou component on which the identity map is such a limit. Combining the work of Forn\ae ss--Sibony and Ueda, Rong~\cite[Thm.~1.1]{Rong2008} proved Theorem~\ref{mainthm:Fatou} for $N=2$. In dimension two Zhuchao Ji~\cite{Ji} obtained finer information on one-dimensional Fatou directions and on Julia sets.

In arbitrary dimension, Astorg~\cite[Thm.~C]{Astorg2020} obtained the same basin description when the divisor $P_f$ satisfies some transversality conditions and $\bP^N\setminus P_f$ is Kobayashi hyperbolic. Astorg also introduced a recursive condition called all-the-way-down. Van Tu Le~\cite{Le2020} proved that every multiplier of a PCF endomorphism of $\bP^2$ is either zero or has modulus greater than one, and subsequently generalized this to higher dimension under Astorg's all-the-way-down condition~\cite{Le2023}. The superattracting-or-repelling dichotomy for periodic points arising in the work of Astorg and Le was a major inspiration for the present work (see Proposition~\ref{prop:replling-direction}), although we don't know whether the all-the-way-down condition holds in general.

\subsection{Strategy and plan}\label{subsec:plan}
Our proof strategy consists of establishing several comparison results in Serre's GAGA-style between dynamically defined a priori transcendental objects and algebraic varieties. The first one is based on Ueda's pioneer work \cite{Ueda1998}. By using Ueda's results, we show that the image of a rank-$r$ limit map on a Fatou component has Zariski closure of dimension precisely $r$. This algebraic control allows us to produce a periodic constant limit map on every Fatou component. 

Then the key difficulty is to show that such a constant limit is superattracting. For this purpose we introduce two new ingredients based on algebraic geometry:
an \emph{algebraization theorem} of formal invariant germs at a periodic point, and the use of \emph{symbolic
powers} of ideal sheaves. The algebraization theorem connects the local dynamics at a periodic point to a global periodic algebraic variety, which we call the \emph{algebraic core} of the periodic point (Theorem~\ref{thm:algebraic-core}). By studying the induced dynamics on the normalization of the algebrai core, we obtain a weak analogue of Astorg and Le's superattracting-or-repelling alternative:
every periodic point is superattracting or has an expanding multiplier
(Proposition~\ref{prop:replling-direction}). 

We remark that the proof of this algebraization theorem is quite different from several classical algebraicity criterion, like GAGA or the algebraicity criteria for formal germs due to Hartshorne \cite{hart68} and Bost \cite{bost01}. Instead of using positivity to kill higher order jets of the germ, we construct many dynamically defined analytic arcs to force its defining equations to vanish to arbitrarily high order along these arcs. Here the PCF condition is essentially used to furnish a fixed ramification boundary for every iterate $f^n$ so that we can transfer some inverse-lifting property from formal arcs to holomorphic arcs by preserving their contact orders with the boundary (see Lemma~\ref{lem:core-inverse-family}).

The second main technique extends infinitesimal contracting directions at a periodic point to its whole algebraic core. A specialization argument for formal inverse arcs shows that a suitable iterate map has zero conormal differential at the generic point of the algebraic core. This induces exponentially increasing dynamics on symbolic powers of ideal sheaves (Theorem~\ref{thm:transverse}). 
A remarkable theorem of Ein--Lazarsfeld--Smith~\cite{ELS2001} then transfers this directly to the level of ordinary ideal sheaves. As a consequence, the presence of one constant limit forces every limit image into the algebraic core (Corollary~\ref{cor:capture}). 

The paper is organized as follows. In Section~\ref{sec:preparation} we review Ueda's work and establish the algebraic control of Fatou limit maps. We construct the algebrai core in Section~\ref{sec:local-invariant} prove a superattracting-or-repelling dichotomy. In Section~\ref{sec:transverse} we prove transverse superattraction along the algebraic core. The proof of the main theorem is completed in Section~\ref{sec:convergence}. The appendix collects some technical lemmas in complex analysis used in our paper.

\section{Preparation}\label{sec:preparation}

\subsection{Notations and terminology}\label{subsec:notations}
For a reduced complex space $X$, we denote by $\Sing X$ its singular locus and define $X_{\rm reg}:=X\setminus\Sing X$.

For a finite morphism $G$ between smooth complex spaces of the same dimension, let $\Crit(G)$ be the locus where its differential is singular, and let $\operatorname{Br}(G):=G(\Crit(G))$ be its branch locus.

For a finite morphism $G:X\to Y$ between normal spaces, its branch locus $\operatorname{Br}(G)$ is by definition the set of points $y\in Y$ such that $G$ is not locally biholomorphic at some point of $G^{-1}(y)$.

We denote by $\deg$ the topological degree of a finite surjective map. Thus $\deg f=d_f^N$ for an endomorphism $f:\bP^N\to \bP^N$.
We call a point $x\in \bP^N$ a \emph{Fatou limit point} of $f$ if there exist a Fatou component $\Omega$ and a locally uniform limit $h$ of $f^{n_j}\vert_\Omega$ for some sequence $n_j\to\infty$ such that $x\in h(\Omega)$.

For a complex analytic germ $(X,p)$, let $\cO_{X,p}$ be the local
ring of holomorphic functions and let $\mathfrak m_{X,p}$ be its
maximal ideal. For a Noetherian local ring $A$, we denote by 
$\widehat A=\varprojlim_{j\geq1}A/\mathfrak m_A^j$ its maximal-ideal completion. A local homomorphism extends uniquely to a continuous homomorphism of completions. In particular, a holomorphic map germ $h:(X,p)\to(Y,q)$ induces $\widehat h^*:\widehat{\cO}_{Y,q}\to\widehat{\cO}_{X,p}$.

We denote by $\bC\{z\}$ and $\bC[[z]]$ the ring of convergent and formal power series in $z=(z_1,\ldots,z_d)$, respectively. For a nonzero element $u$ of a Noetherian local ring, $\operatorname{ord}u$
is the largest integer $j$ such that $u\in\mathfrak m^j$. We use the convention $\operatorname{ord}0=\infty$. 

If $X$ is a complex algebraic variety, we distinguish its algebraic local ring $\cO^{\rm alg}_{X,p}$ from the analytic local ring $\cO_{X,p}$. Their completions are canonically isomorphic. A continuous local $\bC$-algebra homomorphism $\widehat{\cO}_{X,p}\to\bC[[s]]$ restricts to a local homomorphism $\cO^{\rm alg}_{X,p}\to\bC[[s]]$ and hence defines an algebraic morphism $\Spec\bC[[s]]\to X$ whose closed point maps to $p$. 

A holomorphic self-map germ $g:(X,p)\to(X,p)$ is called \emph{superattracting} if $(g^a)^*(\mathfrak m_{X,p})\subset\mathfrak m_{X,p}^2$ for some $a\geq1$. When $X$ is smooth, this is equivalent
to nilpotence of $d_pg$.

The normalization morphism of a reduced complex space is finite. Its fiber over a point corresponds to the irreducible components of the analytic germ at that point \cite[\S~8.1, Th.~2]{GrauertRemmert}. In particular, the normalization of an irreducible reduced analytic germ has a unique point over its center.

\subsection{Fatou limit maps}\label{subsec:limits-iterates}

From now on let $N\geq 1$ and let $f:\bP^N\to\bP^N$ be a PCF
endomorphism with $d_f\geq 2$. We denote $U_f:=\bP^N\setminus P_f$. Every $f^n:f^{-n}(U_f)\to U_f$ is an unramified covering. For every integer $m\geq 1$, the map $f^m$ is PCF, $P_{f^m}=P_f$, and the Fatou sets of $f$ and $f^m$ coincide.

In the sequel we assume that $\Omega$ is a Fatou component of $f$. Let $\cH_\Omega$ be the set of locally uniform limits of $f^{n_j}\vert_\Omega$ with $n_j\to +\infty$. By definition of the Fatou set, the set $\cH_\Omega$ is nonempty and compact in the compact-open topology. It is forward invariant under the composition $H\mapsto f\circ H$.

For a positive iterate $g$ of $f$, we denote by $\cH_\Omega(g)$ the set of locally uniform limits of $g^{n_j}\vert_\Omega$ with $n_j\to+\infty$. Thus $\cH_\Omega=\cH_\Omega(f)$, and $\cH_\Omega(g)\subset\cH_\Omega$. 

The following lemma is a slight modification of \cite[Thm.~2.1]{Ueda1998} where $(\psi_j)$ is a constant sequence. The same proof works.
\begin{lem}[Ueda]\label{lem:inverse-normality}
Let $Z$ be a complex manifold, let $\phi_j,\psi_j:Z\to\bP^N$ be holomorphic maps, and let $n_j\geq 0$ be integers such that $f^{n_j}\circ \phi_j=\psi_j$. If the sequence of functions $(\psi_j)$ is normal, then the sequence $(\phi_j)$ is normal.
\end{lem}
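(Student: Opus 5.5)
Throughout, we may assume without loss of generality that $Z$ is connected, since normality is a local property. The idea is Ueda's: in $\bP^N$, normality of a family of maps is governed by Kobayashi-type hyperbolicity after lifting to $\bC^{N+1}\setminus\{0\}$. More convenient here is the orbifold structure. Since $f$ is PCF, there is an orbifold (ramification) structure $\nu$ on $\bP^N$ supported on $P_f$ such that $f:(\bP^N,\nu)\to(\bP^N,\nu)$ is an orbifold covering. Concretely, $\nu$ is the least common multiple of the local degrees of the iterates $f^n$ along each component of $P_f$. Finiteness of $\nu$ is exactly where the PCF hypothesis enters, and it is the main obstacle. I would not go that route, since $\nu$ may be infinite along some components. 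Instead I will follow Ueda's original argument, which uses the homogeneous lift and the Green function and does not require finite ramification.

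Let $F:\bC^{N+1}\to\bC^{N+1}$ be a homogeneous lift of $f$ of degree $d=d_f$, and let $G(z)=\lim d^{-n}\log\|F^n(z)\|$ be its Green function. Then $G(F(z))=dG(z)$, and $G-\log\|z\|$ is bounded on $\bC^{N+1}\setminus\{0\}$.

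\textbf{Step 1.} It suffices to prove normality near each point $z_0\in Z$ and along each subsequence. Pass to a subsequence on which $\psi_j\to\psi$ locally uniformly. Shrinking $Z$ to a small ball $B$ around $z_0$, we may assume all the $\psi_j(B)$ lie in a fixed small ball $V\subset\bP^N$ on which a section $s:V\to\bC^{N+1}\setminus\{0\}$ of the tautological projection exists.

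\textbf{Step 2.} Set $\tilde\psi_j=s\circ\psi_j$, normalized as follows: multiply by $e^{-G(s\circ\psi_j)}$ times a holomorphic correction, so that $G(\tilde\psi_j)$ is pluriharmonic. The point is that we want lifts lying on the level set $\{G=0\}$ up to a pluriharmonic function. Locally $G\circ s$ is a continuous psh function $u$ with $dd^c u$ equal to the Green current. Since only normality is needed, a cruder approach suffices: take any holomorphic lifts $\tilde\psi_j$ that converge uniformly on $B$ with values in a compact subset $K$ of $\bC^{N+1}\setminus\{0\}$.

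\textbf{Step 3.} Use the surjectivity of $F^{n_j}$ on the level of lifts. $F^{n_j}$ is a proper finite map of $\bC^{N+1}$, and its restriction over the complement of the lifted postcritical cone is a covering. Since $B$ is simply connected, we would like to lift $\tilde\psi_j$ through $F^{n_j}$ to a holomorphic $\tilde\phi_j$ with $F^{n_j}\circ\tilde\phi_j=\tilde\psi_j$ and $\pi\circ\tilde\phi_j=\phi_j$. This lift can be done directly: take any local lift $\hat\phi_j$ of $\phi_j$. Then $F^{n_j}\circ\hat\phi_j=\lambda_j\tilde\psi_j$ for some nonvanishing holomorphic $\lambda_j$ on $B$. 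Choose a $d^{n_j}$-th root $\mu_j$ of $\lambda_j$, which exists on simply connected $B$, and set $\tilde\phi_j=\mu_j^{-1}\hat\phi_j$. No covering theory is needed.

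\textbf{Step 4.} Bound the lifts. From $G(\tilde\phi_j)=d^{-n_j}G(\tilde\psi_j)$ and $\tilde\psi_j(B)\subset K$, the values $G(\tilde\psi_j)$ lie in a fixed interval $[-C,C]$. Therefore $|G(\tilde\phi_j)|\le C$ as well, and consequently $\|\tilde\phi_j\|$ is bounded above by $e^{C+C'}$ and below by $e^{-C-C'}$. In particular the $\tilde\phi_j$ form a uniformly bounded family of holomorphic maps $B\to\bC^{N+1}$, hence a normal family by Montel's theorem. Any limit $\tilde\phi$ satisfies $\|\tilde\phi\|\ge e^{-C-C'}>0$. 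Hence $\phi_j=\pi\circ\tilde\phi_j$ converges along a subsequence to $\pi\circ\tilde\phi$, which is a well-defined map into $\bP^N$.

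A diagonal argument over a countable cover of $Z$ concludes. This argument does not even use the PCF hypothesis. The only delicate point is Step 3, namely the existence of the root $\mu_j$, and that is handled by the simple connectivity of the ball $B$.
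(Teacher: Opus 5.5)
Your argument is correct and is essentially the homogeneous-lift proof of Ueda \cite[Thm.~2.1]{Ueda1998}, to which the paper simply defers. In both, you lift to $\bC^{N+1}\setminus\{0\}$, extract $d^{n_j}$-th roots on a ball so that $F^{n_j}\circ\tilde\phi_j=\tilde\psi_j$, and use $G\circ F=d\,G$ with $G-\log\lVert z\rVert$ bounded to confine the lifts to a fixed compact shell before applying Montel. One step should be stated explicitly: $\phi_j$ must admit a holomorphic lift $\hat\phi_j$ on the whole ball $B$, not merely locally, since $\phi_j(B)$ need not lie in an affine chart. This holds because $\phi_j^*\cO(-1)$ is trivial on a ball, or alternatively because the points $w$ over $\phi_j(z)$ with $F^{n_j}(w)=\tilde\psi_j(z)$ form a $d^{n_j}$-sheeted covering of the simply connected $B$. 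The orbifold and pluriharmonic-normalization detours can simply be deleted.
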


The following fact is the main starting point of our work.
\begin{prop}[Ueda~{\cite[Prop.~5.1]{Ueda1998}}]\label{prop:Fatou-limit-point}
Every Fatou limit point belongs to $\bigcap_{n\geq0}f^n(P_f)$.
\end{prop}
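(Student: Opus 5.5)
We must prove Ueda's Proposition: every Fatou limit point lies in $E_f=\bigcap_{n\ge0}f^n(P_f)$. Since $f(P_f)\subset P_f$, the sets $f^n(P_f)$ form a decreasing sequence of algebraic subsets, so it stabilizes. It therefore suffices to show that a Fatou limit point $x$ lies in $f^m(P_f)$ for every $m\ge0$. The plan has two steps, each argued by contradiction using Lemma~\ref{lem:inverse-normality}.

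\textbf{Step 1: $x\in P_f$.} Let $h=\lim f^{n_j}|_\Omega$ and $x=h(z_0)$, and suppose $x\notin P_f$. Choose a small ball $B\ni x$ disjoint from $P_f$. Since $B$ is simply connected and $f^{n}:f^{-n}(U_f)\to U_f$ is an unramified covering, there are holomorphic inverse branches $g_{n}:B\to\bP^N$ of $f^{n}$. Choose the branch through $f^{n_j-k}(z_0)$ when applied to $f^{n_j}$ composed appropriately. The key use of the lemma is with $\psi_j=\Id_B$ (constant, hence normal) and $\phi_j=g_{n_j}$. Then the branches $g_{n_j}$ form a normal family on $B$.

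On the other hand, $\deg f^{n}=d_f^{nN}$ and $(f^n)^*$ multiplies the Fubini--Study volume form in cohomology by $d_f^{nN}$. The images $g_{n}(B)$ are pairwise-disjoint-type pieces of the preimage of $B$, so their volumes tend to $0$. Any normal limit of $g_{n_j}$ must then be constant, since a limit of non-degenerate maps with shrinking image volume has zero Jacobian everywhere. Alternatively, one can use the expansion $\|d(f^n)\|\to\infty$ on average. The cleanest route is to take the limit $g$ of $g_{n_j}$, which is constant $=y$. Then pick the branch through points of $\Omega$: with $w_j=f^{n_j}(z)$ for $z$ near $z_0$, we have $g_{n_j}(w_j)=z$ for all $z$ in a neighborhood of $z_0$. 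This forces $z=y$ for all such $z$, which is a contradiction. Hence $x\in P_f$.

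\textbf{Step 2: $x\in f^m(P_f)$ for all $m$.} Fix $m$. Passing to a subsequence, $f^{n_j-m}|_\Omega$ converges to some $h'$, and $h=f^m\circ h'$. By Step 1, $h'(z_0)\in P_f$, so $x=f^m(h'(z_0))\in f^m(P_f)$. Intersecting over $m$ gives the claim.

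\textbf{Main obstacle.} The main difficulty is Step 1, namely making rigorous that the inverse branches selected to pass through the orbit points $z$ (rather than arbitrary branches) are genuinely single-valued holomorphic maps on a fixed ball $B$, and that their normal limit is degenerate. I would handle degeneracy via the volume argument: for distinct branches the images $g_n(B)$ are disjoint, so $\sum_{\text{branches}}\mathrm{vol}(g_n(B))\le\mathrm{vol}(\bP^N)$ with $d_f^{nN}$ branches. This must be combined with a Cauchy-estimate argument showing that the chosen branch cannot stay large. If it does stay large, I would instead use the identity $g_{n_j}\circ f^{n_j}=\Id$ near $z_0$ together with the fact that $f^{n_j}$ converges to $h$, whose image near $z_0$ has measure zero unless $h$ has full rank; full rank would make $f^{n_j}$ eventually injective near $z_0$ with bounded derivatives. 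That is incompatible with growth of $\deg f^{n}$ via the mass of $(f^{n})^*\omega^N$ on a neighborhood, which concentrates at $d_f^{nN}$ times the volume.
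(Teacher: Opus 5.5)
Your reduction and Step~2 are fine, but Step~1 has a genuine gap: the claim that a limit $g$ of the branches $g_{n_j}$ is constant is unjustified, and it is false under your contradiction hypothesis. The branch you need is the one with $g_{n_j}(f^{n_j}(z_0))=z_0$. If $V\ni z_0$ is a connected neighborhood with $f^{n_j}(V)\subset B$ for large $j$, then $g_{n_j}\circ f^{n_j}=\Id_V$. Hence $g_{n_j}(B)\supset V$, and its volume stays bounded below.

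Disjointness of the $d_f^{nN}$ sheets only bounds their \emph{average} volume, not the volume of this particular sheet. In fact, equicontinuity of $(g_{n_j})$ gives $g\circ h=\Id_V$ in the limit. So $g$ is nonconstant and $h$ is biholomorphic near $z_0$; your conclusion $z=y$ rests on exactly what fails.

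Your fallback fails for the same reason. On a set where $f^n$ is injective, the mass of $(f^n)^*\omega^N$ is at most $\operatorname{vol}(\bP^N)$, so degree growth says nothing locally. More decisively, the only input from postcritical finiteness in your Step~1 is the existence of inverse branches of all $f^n$ over a small ball. This also holds over a ball inside the product of Siegel disks of $[x:y:t]\mapsto[x^2+\lambda xt:y^2+\mu yt:t^2]$, with $|\lambda|=|\mu|=1$ chosen so that both one-variable factors have Siegel disks at $0$; the closure of the postcritical set misses that bidisk. There the identity is a Fatou limit map, and the branches you select converge to the identity. So no purely local argument of this kind can prove Step~1.

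What is missing is a global use of postcritical finiteness on the connected set $U_f$. Starting from $g\circ h=\Id_V$, argue as in the proof of Lemma~\ref{lem:rank-drop}. With $l_j=n_{j+1}-n_j$, you have $f^{l_j}=f^{n_{j+1}}\circ g_{n_j}$ on $B$. Since $g_{n_j}$ maps a small ball $B'\ni x$ into a fixed compact subset of $\Omega$ for large $j$, this converges to $h\circ g=\Id$ on $B'$.

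Then propagate this convergence. Every point of $U_f$ has a ball over which all inverse branches of all iterates exist. By Lemma~\ref{lem:inverse-normality} and the identity theorem, the set of points of $U_f$ near which $f^{l_j}\to\Id$ is open and closed in $U_f$, hence equals $U_f$. Thus $f$ is injective on $U_f$, contradicting $\deg f=d_f^N\geq 2$.

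This is precisely Lemma~\ref{lem:no-identity-returns} with $S=\bP^N$, whose proof does not use the statement under review; the paper itself only quotes Ueda for it. With Step~1 repaired this way, your Step~2 completes the proof.
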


In the sequel we will fix a log resolution of the pair $(\bP^N,P_f)$, i.e.\ a smooth projective variety $Y$ together with a morphism $\pi:Y\to\bP^N$ such that
\begin{enumerate}
\item $\pi$ is an isomorphism over $U_f\cup(P_f)_{\rm reg}=\bP^N\setminus\operatorname{Sing}(P_f)$;
\item the reduced total transform $D:=\pi^{-1}(P_f)_{\rm red}$ is a simple normal crossing divisor on $Y$;
\item[(iii)] the strict transform $\widetilde P_f:=\overline{\pi^{-1}\bigl((P_f)_{\rm reg}\bigr)}$ is smooth.
\end{enumerate}

Let $D_1,\ldots,D_l$ be the irreducible components of $D$. Let $Y_1,\ldots,Y_s$ be the nonempty closed subsets of $Y$ of the form
$D_{i_1}\cap\cdots\cap D_{i_k}$, where $1\leq k\leq l$ and
$1\leq i_1<\cdots<i_k\leq l$. Since $D$ has simple normal crossings, each $Y_j$ is smooth and projective, but need not be connected. We define
\begin{equation}\label{eq:resolution-exceptional-set}
E_\pi:=\bigcup_{i=1}^s
\pi\bigl(\{y\in Y_i\mid d_y(\pi\vert_{Y_i})=0\}\bigr).
\end{equation}

For $0\leq r\leq N-1$, we also define
\begin{equation}\label{eq:resolution-rank-loci}
E_{\pi,r}:=\bigcup_{i=1}^s
\pi\bigl(\{y\in Y_i\mid \operatorname{rank}d_y(\pi\vert_{Y_i})\leq r\}\bigr).
\end{equation}
Thus $E_{\pi,0}=E_\pi$.

Recall that the generic rank of a holomorphic map on a connected complex manifold is the maximum rank of its differential, which is realized on a dense subset.

\begin{prop}\label{prop:log-rank}
The set $E_\pi$ is finite. Every constant map in $\cH_\Omega$ has value in $E_\pi$. More generally, the set $E_{\pi,r}$ is algebraic and has dimension at most $r$. Every map in $\cH_\Omega$ of generic rank at most $r$ has image in $E_{\pi,r}$. If $H\in\cH_\Omega$ has generic rank $r$, then the Zariski closure of $H(\Omega)$ is an irreducible component of $E_{\pi,r}$ of dimension $r$, and has finite forward orbit under $f$.
\end{prop}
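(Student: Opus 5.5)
The plan has three parts: the elementary algebraic facts about $E_{\pi,r}$, a lifting step that puts every limit map into some stratum $Y_i$, and a counting argument for the last sentence.

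\textbf{Algebraic facts.} Each $Y_i$ is smooth and projective. The locus $\{y\in Y_i\mid \operatorname{rank} d_y(\pi\vert_{Y_i})\leq r\}$ is Zariski closed in $Y_i$. Its image under the proper map $\pi$ is therefore algebraic. To bound its dimension, apply generic smoothness (Sard) to each irreducible component $W$ of this locus. On a dense open subset of $W$, the rank of $d(\pi\vert_W)$ equals $\dim\pi(W)$. This rank is at most the rank of $d(\pi\vert_{Y_i})$ restricted to $TW$, hence at most $r$. So $\dim E_{\pi,r}\leq r$, and in particular $E_\pi=E_{\pi,0}$ is finite.

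\textbf{Lifting step.} Let $H\in\cH_\Omega$ have generic rank at most $r$. By Proposition~\ref{prop:Fatou-limit-point}, $H(\Omega)\subset P_f$. I would lift $H$ through $\pi$ as follows.
\begin{itemize}
\item On a small ball $B\subset\Omega$, write $H=\lim f^{n_j}$. Since $H(\Omega)\subset P_f$, the images $f^{n_j}(B)$ approach $P_f$, but they also meet $U_f$.
\item Use Ueda's inverse-normality (Lemma~\ref{lem:inverse-normality}) in the following form. Because $f^{n_j}$ are coverings over $U_f$, the maps $f^{n_j}\vert_B$ should lift through $\pi$ to maps $\tilde h_j:B\to Y$. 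Possibly after shrinking $B$ and passing to a subsequence, the $\tilde h_j$ converge to a holomorphic $\tilde H:B\to Y$ with $\pi\circ\tilde H=H$.
\item The limit $\tilde H$ lands in $D$. Choose the maximal set of components $D_{i_1},\dots,D_{i_k}$ containing $\tilde H(B)$ generically. Then $\tilde H$ maps a dense open subset of $B$ into the stratum $Y_i=D_{i_1}\cap\dots\cap D_{i_k}$.
\item The key claim is that $\tilde H$ has rank $0$ on $B$ \emph{transversally inside $Y_i$}, in the sense that $H$ has rank $\leq r$ and $\operatorname{rank} d(\pi\vert_{Y_i})\leq r$ at the points of $\tilde H(B)$. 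For the rank-$0$ case, the honest argument I expect is this: near a normal crossing point, the log-coordinates of $\tilde h_j$ tend to $0$ while their arguments are controlled. This forces $\tilde H$ to be constant along directions where $\pi\vert_{Y_i}$ is nondegenerate.
\end{itemize}
More simply, the conclusion I aim for is $\operatorname{rank} d_{\tilde H(z)}(\pi\vert_{Y_i})\leq\operatorname{rank} d_z H$ for generic $z$. This gives $H(B)\subset E_{\pi,r}$, and by the identity principle $H(\Omega)\subset E_{\pi,r}$. The constant case is then immediate.

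\textbf{Main obstacle.} The genuine difficulty is producing $\tilde H$ and the rank inequality. I would try to show, via Lemma~\ref{lem:inverse-normality} applied to local branches of $\pi^{-1}$, that if $d(\pi\vert_{Y_i})$ had rank $>r$ along $\tilde H(B)$, then $\pi$ would be a local immersion of $Y_i$ there. In that case, nearby sheets of $f^{n_j}(B)$ would produce limit rank $>r$, a contradiction.

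\textbf{Last sentence.} Suppose $H$ has generic rank exactly $r$. Then $\overline{H(\Omega)}^{\rm Zar}$ is irreducible, since $\Omega$ is connected, and has dimension $\geq r$. It is contained in $E_{\pi,r}$, which has dimension $\leq r$. Hence it equals an irreducible component of $E_{\pi,r}$ of dimension $r$. Finally, $f\circ H\in\cH_\Omega$ has the same generic rank $r$, because $f$ is finite. So $f(\overline{H(\Omega)})=\overline{(f\circ H)(\Omega)}$ is again one of the finitely many $r$-dimensional components of $E_{\pi,r}$. The forward orbit is therefore finite.
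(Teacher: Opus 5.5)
\textbf{There is a genuine gap in your lifting step, which is the heart of the proposition.} Your algebraic facts on $E_{\pi,r}$ (generic smoothness on components of the rank loci) are fine and agree with the paper, and so is your argument for the last sentence.

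The maps $\tilde h_j=\pi^{-1}\circ f^{n_j}\vert_B$ are in general only meromorphic. The map $\pi$ is an isomorphism only off $\operatorname{Sing}P_f$, and the indeterminacy loci $(f^{n_j})^{-1}(\operatorname{Sing}P_f)\cap B$ move with $j$, so you cannot shrink $B$ once and for all. More seriously, nothing makes the $\tilde h_j$ a normal family. Lemma~\ref{lem:inverse-normality} controls maps $\phi_j$ such that $f^{n_j}\circ\phi_j$ converges, i.e.\ lifts through iterates of $f$. It says nothing about lifts through the modification $\pi$, and the covering property of $f^{n_j}$ over $U_f$ does not help with lifting through $\pi$. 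If $H$ meets $\operatorname{Sing}P_f$ (for instance $H$ constant at a point blown up by $\pi$), the fiber of $\pi$ is positive-dimensional and the $\tilde h_j$ can oscillate in the exceptional directions, so $\tilde H$ need not exist.

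Even granting $\tilde H$, the inequality $\operatorname{rank}d_{\tilde H(z)}(\pi\vert_{Y_i})\leq\operatorname{rank}d_zH$ is the reverse of what the chain rule gives, and it is exactly what has to be proved. Convergence of the forward images $f^{n_j}(B)$ gives no lower bound on the rank of $H$. Your remarks about log-coordinates and nearby sheets do not supply one either.

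\textbf{The missing idea} is to apply Lemma~\ref{lem:inverse-normality} to inverse branches of $f^{n_j}$ along disks, pointwise, without lifting $H$ at all.
\begin{enumerate}
\item Fix $z\in\Omega$ whose forward orbit avoids $P_f$; such points are dense. Pass to a subsequence with $y_j:=\pi^{-1}(f^{n_j}(z))\to y$. Then $\pi(y)=H(z)\in P_f$, so $y\in D$.
\item Let $T$ be the intersection of all components of $D$ through $y$, and set $\ell:=\operatorname{rank}d_y(\pi\vert_T)$. Work in coordinates with $D=\{u_1\cdots u_k=0\}$ and $T=\{u_1=\cdots=u_k=0\}$ near $y$. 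Freeze $u_1,\ldots,u_k$ at their nonzero values at $y_j$ and move the remaining coordinates. This gives polydisks $\sigma_j:\bD^\ell\to Y\setminus D$ with $\sigma_j(0)=y_j$, converging to $\sigma:\bD^\ell\to T$ with $\operatorname{rank}d_0(\pi\circ\sigma)=\ell$.
\item Since $\bD^\ell$ is simply connected and $f^{n_j}$ is a covering over $U_f$, $\pi\circ\sigma_j$ lifts to $\tau_j$ with $f^{n_j}\circ\tau_j=\pi\circ\sigma_j$ and $\tau_j(0)=z$.
\item Lemma~\ref{lem:inverse-normality} gives a limit $\tau$ with $H\circ\tau=\pi\circ\sigma$. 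Differentiating at $0$ shows $\operatorname{im}d_zH\supset\operatorname{im}d_0(\pi\circ\sigma)$, so $\ell\leq\operatorname{rank}d_zH\leq r$.
\item Hence $H(z)=\pi(y)\in E_{\pi,r}$. Density of such $z$ and closedness of $E_{\pi,r}$ give $H(\Omega)\subset E_{\pi,r}$.
\end{enumerate}
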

\begin{proof}
For each $r$, the loci $C_{i,r}:=\{y\in Y_i\mid \operatorname{rank}d_y(\pi\vert_{Y_i})\leq r\}$ are algebraic. On the smooth locus of every irreducible component of $C_{i,r}$, the restriction of $\pi$ has rank at most $r$. Its image has dimension at most $r$. Since $\pi$ is proper, the set $E_{\pi,r}$ is algebraic and has dimension at most $r$. In particular, $E_\pi$ is finite.

Let $H$ be a locally uniform limit of $(f^{n_j}\vert_\Omega)$, with $n_j\to\infty$. The points in $\Omega$ whose forward $f$-orbit avoids $P_f$ are dense, since every $f^{-n}(P_f)$ is a proper algebraic hypersurface. Let $z\in\Omega$ be such a point.
By compactness of $Y$, up to passing to a subsequence, we may
assume that $y_j=\pi^{-1}(f^{n_j}(z))$ converges to a point $y\in Y$.
Then $\pi(y)=H(z)\in P_f$, so $y\in D$.

Let $T$ be the intersection of all irreducible components of $D$ passing through $y$, and let $r:=\operatorname{rank}d_y(\pi\vert_T)$. Then $T$ is one of the varieties $Y_i$.
Suppose $r>0$. We can find holomorphic maps $\sigma_j:\bD^r\to Y\setminus D$ defined on a polydisk satisfying $\sigma_j(0)=y_j$ for all sufficiently large $j$, converging locally uniformly to a holomorphic map $\sigma:\bD^r\to T$ with $\sigma(0)=y$ and $\operatorname{rank}(\mathrm d_0(\pi\circ\sigma))=r$.
Recall that each $f^{n_j}:f^{-n_j}(U_f)\to U_f$ is a unramified covering. Since $\bD^r$ is simply connected, the map $\pi\circ\sigma_j$ lifts through $f^{n_j}$ to a holomorphic map
$\tau_j:\bD^r\to\bP^N$ satisfying
\begin{equation}\label{eq:lifted-disc-identity}
f^{n_j}\circ\tau_j=\pi\circ\sigma_j,\quad \tau_j(0)=z.
\end{equation}
The maps $\pi\circ\sigma_j$ converge locally uniformly.
By Lemma~\ref{lem:inverse-normality}, up to passing to a subsequence, we may assume that $\tau_j$ converges locally uniformly to a holomorphic map $\tau$. We differentiate \eqref{eq:lifted-disc-identity} at zero and pass to the limit. Since $\tau_j(0)=z$, we obtain that $\mathrm d_zH\circ\mathrm d_0\tau =\mathrm d_0(\pi\circ\sigma)$. Hence $\operatorname{rank}(\mathrm d_zH)\geq r$.

Suppose now that $H$ has generic rank at most $q$. Then $r\leq\operatorname{rank}(\mathrm d_zH)\leq q$ by the above argument. Hence $H(z)\in E_{\pi,q}$. Recall that $z$ is an arbitrary point in $\Omega$ whose forward orbit avoids $P_f$. Since these points are dense in $\Omega$ and $E_{\pi,q}$ is closed, we conclude that $H(\Omega)\subset E_{\pi,q}$.

If $H$ has generic rank $q$, then the Zariski closure of its image is an irreducible component $X$ of $E_{\pi,q}$ of dimension $q$ because $\Omega$ is connected. Since $f$ is finite, each $f^n\circ H$ still has generic rank $q$ and belongs to $\cH_\Omega$. Therefore every $f^n(X)$ is an irreducible component of $E_{\pi,q}$ of dimension $q$. There are only finitely many such components, so the forward orbit of $X$ is finite.
\end{proof}

\subsection{Inverse germs}\label{subsec:inverse-germs}

Let $X\subset\bP^N$ be an irreducible algebraic subvariety.
Recall that we fixed the log resolution $\pi:Y\to\bP^N$ and the divisor $D=\pi^{-1}(P_f)_{\mathrm{red}}$. For an integer $n\geq1$ and an open subset $B\subset X_{\rm reg}$, a holomorphic section of $f^n$ over $B$ will mean a holomorphic map $\beta:B\to\bP^N$ such that $f^n(\beta(x))=x$ for every $x\in B$.

\begin{lem}\label{lem:inverse-extension}
Let $X\subset\bP^N$ be an irreducible algebraic subvariety. 
There exists a connected dense Zariski-open subset of $X_{\rm reg}$ such that every point has a analytic neiborhood $B$ in $X$ such that for every integer $n\ge1$, $f^{-n}(B)$ is the union of the images of finitely many holomorphic sections of $f^n$ over $B$. 
\end{lem}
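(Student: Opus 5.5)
We will take the Zariski-open set to be the complement in $X_{\rm reg}$ of the proper algebraic subset $X\cap\bigl(P_f\cup\Sing(P_f)\cup\dots\bigr)$ arranged as follows. If $X\not\subset P_f$, let $X^\circ:=X_{\rm reg}\setminus P_f$. This is a connected dense Zariski-open subset of $X_{\rm reg}$, because $X$ is irreducible and hence $X_{\rm reg}$ is connected. For $x\in X^\circ$, we choose a small simply connected neighborhood $B\subset X^\circ$ of $x$, say a ball in a chart. Since $f^n:f^{-n}(U_f)\to U_f$ is an unramified covering of degree $d_f^{nN}$ and $B\subset U_f$ is simply connected, $f^{-n}(B)$ is the disjoint union of $d_f^{nN}$ images of holomorphic sections over $B$. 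This settles the case $X\not\subset P_f$.

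The substantive case is $X\subset P_f$. We will stratify using the finitely many irreducible components of $P_f$. Let $P_1,\dots,P_k$ be the components of $P_f$ containing $X$. We take $X^\circ$ to be the set of points of $X_{\rm reg}$ that avoid the other components of $P_f$. We also remove the closed subset of $X$ where $X$ meets $\Sing(P_i)$ nontransversally in a suitable sense. The goal is that near a generic $x\in X$, the pair $(\bP^N,P_f)$ looks like $X\times(\text{germ of } P_f \text{ transverse to } X)$, analytically locally trivial along $X$. We would obtain this by generic local triviality (Whitney stratification, or generic equisingularity) of $P_f$ along the stratum containing the generic point of $X$. Concretely, we choose a Whitney stratification of $P_f$ refined so that $X$ is a union of strata closures. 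We then let $X^\circ$ be the open stratum inside $X_{\rm reg}$ minus lower strata. By Thom–Mather, there is a neighborhood $V\cong B\times T$ of $x$ with $P_f\cap V\cong B\times(P_f\cap T)$, where $T$ is a transversal slice and $B$ is a ball in $X$.

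Given this local product structure, the covering argument goes as follows. We use that $f^{-n}(P_f)\supset P_f$, which holds since $f(P_f)\subset P_f$ is not what we need. What we need is that over $V\setminus P_f\simeq B\times(T\setminus P_f)$, the map $f^n$ is an unramified covering. The fundamental group of $B\times(T\setminus P_f)$ equals $\pi_1(T\setminus P_f)$. Hence each connected component $W$ of $f^{-n}(V\setminus P_f)$ is a finite cover of the form $B\times W_T$, where $W_T\to T\setminus P_f$ is a finite cover. Indeed, the covering is pulled back from the slice via the projection, because $B$ is contractible. The closure of $W$ in $\bP^N$ is the image of a finite holomorphic map from $B\times\overline{W_T}$, where $\overline{W_T}$ is the normal completion over $T$ (Grauert–Remmert/Riemann extension for finite branched covers). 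The fiber over $x'\in B$ is $\{x'\}\times(\text{preimage of the center } 0\in T)$. The center has finitely many preimages in the normalization, $p_1,\dots,p_m$, and these give sections $x'\mapsto\Phi(x',p_i)$, where $\Phi: B\times\overline{W_T}\to\bP^N$ is the extended holomorphic map realizing the inverse branch. Every point of $f^{-n}(B)$ is a limit of points of $f^{-n}(V\setminus P_f)$, since $f^n$ is open and $f^{-n}(P_f)$ is nowhere dense. It therefore lies in one of these images. Finiteness for each $n$ follows from finiteness of the covering degrees.

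The main obstacle is making the product structure holomorphic rather than merely topological. Thom–Mather only yields a homeomorphism, which suffices for the $\pi_1$ argument and the covering-space splitting. We would then recover holomorphy of the sections as follows. The map $x'\mapsto\Phi(x',p_i)$ is continuous, and its graph is a branch of the analytic set $f^{-n}(B)$ restricted over $B$. Since $f^n|_{f^{-n}(B)}$ is a finite map onto the smooth $B$, a continuous section whose image is an irreducible analytic component mapping with degree one is holomorphic (Riemann extension on normal $B$). To guarantee degree one, we shrink $X^\circ$ further so that each component of $f^{-n}(X)$ is unramified over the locus; this is uniform in $n$ because of the product structure.
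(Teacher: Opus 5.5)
Your strategy, Thom--Mather triviality of $(V,P_f\cap V)$ along a generic stratum of $X$ in place of the paper's log resolution, is different from the paper's and can be made to work. Everything up to the splitting $W\cong B\times W_T$ of the \emph{unramified} part is fine; take an algebraic Whitney stratification so that $X^\circ$ is Zariski open. The gap is the step that carries this splitting to the fibers of $f^n$ over $B$ itself. Those fibers lie inside the branch locus $f^{-n}(P_f)$, and handling them uniformly in $n$ is precisely the content of the lemma. Since the trivialization $\Theta\colon B\times T\to V$ is only a homeomorphism (and in general cannot be chosen holomorphic, e.g.\ four planes through a line with varying cross-ratio), there is no finite holomorphic $\Phi\colon B\times\overline{W_T}\to\bP^N$. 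Your fallback claim that $x'\mapsto\Phi(x',p_i)$ is at least continuous presupposes exactly the extension across $f^{-n}(P_f)$ that must be proved. Without it you also do not know that the putative sheets are disjoint, hence that their images are open and closed in the analytic set $f^{-n}(B)$. The proposed cure is not admissible. Shrinking $X^\circ$ ``so that each component of $f^{-n}(X)$ is unramified'' depends on $n$, whereas the lemma demands one $B$ for all $n$, and justifying uniformity ``by the product structure'' is circular.

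What is missing is a local-irreducibility count. Let $\widetilde V$ be a connected component of $f^{-n}(V)$. It is smooth, $f^n\colon\widetilde V\to V$ is finite, proper and open, and $\widetilde V\setminus f^{-n}(P_f)$ is one of your $W$. Fix $b\in B$ and take connected product neighborhoods $N=\Theta(N_B\times N_T)$ of $b$, small enough. Then every connected component of $\widetilde V\cap f^{-n}(N)$ maps onto $N$, contains exactly one point of $f^{-n}(b)$, and stays connected after removing the hypersurface $f^{-n}(P_f)$. Through $W\cong B\times W_T$ these components correspond to the components of $W_T\cap f^{-n}(N_T\setminus P_f)$, a number not involving $b$. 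Comparing with $b=x$ shows that $\#\bigl(\widetilde V\cap f^{-n}(b)\bigr)$ is constant on $B$. Together with openness, this makes $\widetilde V\cap f^{-n}(B)\to B$ a finite covering, trivial since $B$ is simply connected. Each sheet is open and closed in an analytic set, so its inverse is continuous with analytic graph and is holomorphic by Riemann extension. No shrinking is needed, and $B$ works for all $n$ because $\Theta$ does not depend on $n$.

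The paper sidesteps all of this. It lifts $B$ by a holomorphic section of a log resolution into the deepest stratum $\{u_1=\cdots=u_k=0\}$ of an SNC chart, where the local monodromy is abelian. Lemma~\ref{lem:local-power-coordinates} then shows that the normalized fiber product is dominated by a power map which is the identity on that stratum, so the sections are holomorphic from the outset. The same local model is reused, via Lemma~\ref{lem:arc-transfer}, in Lemmas~\ref{lem:core-inverse-family} and~\ref{lem:formal-inverse-family}. Your route, once repaired, works directly in $\bP^N$ and copes with nonabelian local monodromy, at the cost of stratification theory and a topological counting argument.
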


\begin{proof}
Let $C$ be an irreducible component of $\pi^{-1}(X)$ dominating $X$. 
There is a dense Zariski-open subset of $C_{\rm reg}$ on which $\pi\vert_C$ takes values in $X_{\rm reg}$ and has surjective differential. We remove from this subset its intersections with the
components of $D$ that do not contain $C$. 
The remaining subset, denoted $C^\circ$, is still dense and Zariski open in $C$. Its image under $\pi$ thus contains a dense Zariski-open subset $X^\circ\subset X_{\rm reg}$. 

Let $x\in X^\circ$, and let $y\in \pi^{-1}(x)\subset C^\circ$. 
There is a small polydisk $P\subset Y$ around $y$, with coordinates $u_1,\ldots,u_N$, such that $D\cap P=\{u_1\cdots u_k=0\}$ for some $0\leq k\leq N$ because $D$ has simple normal crossing. We may assume also that every component of $D$ meeting $P$ contains $C$. By the holomorphic implicit function theorem, there are a neiborhodd $B\subset X^\circ$ of $x$ and a holomorphic map $s:B\to C^\circ\cap P$ such that $\pi(s(z))=z$ for every $z\in B$. Since every component of $D$ meeting $P$ contains $C$, we have $s(B)\subset\{u_1=\cdots=u_k=0\}$. 

Let $n\geq 1$, and let $W_n=P\times_{\bP^N,f^n}\bP^N$ be the fiber product, where the maps to $\bP^N$ are respectively $\pi\vert_P$ and $f^n$. In other words the points of $W_n$ are the pairs $(u,v)\in P\times\bP^N$ satisfying $\pi(u)=f^n(v)$. Let $p_n:W_n\to P$ and $q_n:W_n\to\bP^N$ be the projections. We have
\begin{equation}\label{eq:inverse-extension-projections}
\pi\circ p_n=f^n\circ q_n.
\end{equation}
Note that every irreducible component of $W_n$ dominates $P$. 
Let $\nu_n:Z_n\to W_n$ be the normalization of $W_n$. Define $\rho_n:=p_n\circ\nu_n:Z_n\to P$, which is a finite surjective map. Over $P\setminus D$, the map $p_n$ is an unramified covering because $f^n$ is unramified over $\bP^N\setminus P_f$. As the space $W_n$ is smooth over $P\setminus D$, and $\nu_n$ is an isomorphism there, $\rho_n$ is unramified over $P\setminus D$.

By Lemma~\ref{lem:local-power-coordinates}, the inverse image of $\{u_1=\cdots=u_k=0\}$ under $\rho_n$ is a finite disjoint union of copies of this coordinate subspace, each mapped isomorphically onto it by $\rho_n$. For each copy, let $\widetilde s:B\to Z_n$ be the
composition of $s$ with the inverse of the restriction of $\rho_n$ to that copy. Then $\rho_n\circ\widetilde s=s$. Define $\gamma=q_n\circ\nu_n\circ\widetilde s:B\to\bP^N$. By \eqref{eq:inverse-extension-projections}, we obtain $f^n\circ\gamma=\pi\circ\rho_n\circ\widetilde s =\pi\circ s$. Thus $f^n(\gamma(z))=z$ for every $z\in B$.

Let $z\in B$ and $v\in f^{-n}(z)$. Since $\pi(s(z))=z=f^n(v)$, the pair $(s(z),v)$ belongs to $W_n$. The map $\nu_n$ being surjective, there exists $\xi\in Z_n$ such that $\nu_n(\xi)=(s(z),v)$. We then have $\rho_n(\xi)=s(z)$. Hence $\xi$ lies in one of the copies of
$\{u_1=\cdots=u_k=0\}$, and the corresponding map $\widetilde s$ satisfies $\widetilde s(z)=\xi$. For its associated section $\gamma$, we therefore have $\gamma(z)=q_n(\nu_n(\xi))=v$. We deduce that the images of the sections cover $f^{-n}(B)$. The same neiborhood $B$ works for every $n\geq 1$.
\end{proof}

\begin{lem}\label{lem:normalization-boundary}
Let $Z$ be any irreducible algebraic subvariety of $\bP^N$ satisfying $f(Z)=Z$, and let $\nu:X\to Z$ be its normalization. 
Then $f$ induces a finite surjective holomorphic map $g:X\to X$ satisfying $\nu\circ g=f\circ\nu$. There is a proper forward-invariant algebraic subset $B\subset X$ containing $\Sing X$ and $\operatorname{Br}(g^n)$ for every $n\geq1$.
\end{lem}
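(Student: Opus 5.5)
The plan has three parts: lift $f$ to the normalization, then locate where each iterate $g^n$ can fail to be étale, and finally take $B$ to be the union of a few algebraic sets that do not depend on $n$.

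\emph{Step 1: the lift $g$.} The map $f\circ\nu:X\to Z$ is finite and surjective, and $X$ is normal. By the universal property of normalization for dominant maps, $f\circ\nu$ factors uniquely through $\nu$. This gives a morphism $g:X\to X$ with $\nu\circ g=f\circ\nu$. It is finite because $f\circ\nu$ is finite and $\nu$ is separated. Since $\dim X=\dim Z$, $g$ is also surjective. Uniqueness of the factorization gives $\nu\circ g^n=f^n\circ\nu$.

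\emph{Step 2: the candidate set.} Put
\[
B:=\bigcup_{k\geq0} g^k\bigl(\Sing X\cup \nu^{-1}(P_f)\bigr).
\]
First, $Z\not\subset P_f$ is not automatic, so it must be handled. If $Z\subset P_f$, I would replace $P_f$ by the relative branch structure of $f^n\vert_Z$, which is the delicate point (see below). Assume first that $Z\not\subset P_f$. Then $\nu^{-1}(P_f)$ is a proper algebraic subset. Since $f(P_f)\subset P_f$, we get $g(\nu^{-1}(P_f))\subset\nu^{-1}(P_f)$, so that part of the union is already forward invariant. For $\Sing X$, the images $g^k(\Sing X)$ are algebraic of dimension below $\dim X$. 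I would show that their union stabilizes by proving $g(\Sing X)\subset \Sing X\cup\nu^{-1}(P_f)$. Away from $\nu^{-1}(P_f)$, $f^n$ is a local biholomorphism of $\bP^N$ which carries $Z$ into $Z$ and has $Z$ locally as a component of $f^{-n}(Z)$. There $g$ is a local isomorphism, so it preserves the local analytic type and maps singular points to singular points. With this, $B=\Sing X\cup\nu^{-1}(P_f)\cup g(\Sing X)\cup\cdots$ is a finite union and forward invariant. I would also check that it contains $\operatorname{Br}(g^n)$. Over a point $x\notin B$, every preimage $x'$ of $x$ under $g^n$ also lies outside $\nu^{-1}(P_f)$, because $f^n(P_f)\subset P_f$ and $f^{-n}(P_f)\supset P_f$. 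Then $f^n$ is a biholomorphism near $\nu(x')$ onto a neighbourhood of $\nu(x)$ and restricts to the germ of $Z$. The branch of $Z$ through $\nu(x')$ maps onto a branch of $Z$ at $\nu(x)$. Normalization separates branches, so $g^n$ is a local biholomorphism at $x'$.

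\emph{Step 3: the main obstacle, the case $Z\subset P_f$.} Here $P_f$ gives no information, and this is where I expect the real work. I would use Lemma~\ref{lem:inverse-extension} applied to $Z$. It gives a dense Zariski-open $Z^\circ\subset Z_{\rm reg}$ over which each $f^{-n}$ splits into finitely many holomorphic sections, locally and uniformly in $n$. Over $Z^\circ$ the sections whose image lies in $Z$ give local inverses of $f^n\vert_Z$. Hence $f^n\vert_Z$ is unramified over $Z^\circ$ at all points of $Z$, so $\operatorname{Br}(g^n)\subset\nu^{-1}(Z\setminus Z^\circ)=:A$ for every $n$. This needs care: $f^n\vert_Z$ could be ramified at a point $p\in Z$ even though $f^n$ has a section through $p$. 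If the section is holomorphic and lands in $Z$ near $p$, though, it inverts $f^n\vert_Z$, so the restriction is a local biholomorphism there. The remaining task is to make $A$ forward invariant. I would take $B=\bigcup_k g^k(A\cup\Sing X)$. Finiteness of this union follows if $g$ maps $X\setminus A'$ into $X\setminus A'$ for a suitable $A'\supset A$. I would try to obtain this by enlarging $A$ to the set of points where some iterate fails to be étale, defined as $\bigcup_n\operatorname{Br}(g^n)$. This set lies inside $A$, so it is contained in a fixed proper algebraic set. Its Zariski closure is then a proper algebraic set, and by $\operatorname{Br}(g^{n+1})\supset g(\operatorname{Br}(g^n))$ it is forward invariant. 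The same union-of-images argument then absorbs $\Sing X$.
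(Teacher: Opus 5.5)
Your Step 3 is essentially the paper's proof: Lemma~\ref{lem:inverse-extension} confines every $\operatorname{Br}(g^n)$ to the fixed proper set $X\setminus\nu^{-1}(Z^\circ)$, and $B=\Sing X\cup\overline{\bigcup_n\operatorname{Br}(g^n)}^{\mathrm{Zar}}$ is then forward invariant because $g(\operatorname{Br}(g^n))\subset\operatorname{Br}(g^{n+1})$ and $g(\Sing X)\subset\Sing X\cup\operatorname{Br}(g)$. Since that argument works for every $Z$, the case split is unnecessary, and so is your $P_f$-based Step 2 (which is correct on its own). The one step to make explicit, as the paper does, is the one you flagged as needing care: each irreducible component of $Z\cap f^{-n}(V)$ is contained in the finite union of the connected $(\dim Z)$-dimensional manifolds $\beta_i(V)$, so it lies in one of them and, having the same dimension, equals it; hence the normalization over $V$ is a disjoint union of section images, each mapped isomorphically onto $\nu^{-1}(V)$ by $g^n$.
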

\begin{proof}
The existence of $g$ follows from functoriality of normalization. 

We apply Lemma~\ref{lem:inverse-extension} to $Z$, and obtain a dense Zariski-open subset $U\subset Z_{\rm reg}$ with the following
property: every point of $U$ has a connected analytic neighborhood
$V\subset U$ such that, for every $n\geq1$, the set $f^{-n}(V)$
is the union of the images of finitely many holomorphic sections
of $f^n$ over $V$.

Fix such a neighborhood $V$ and an integer $n\geq 1$. Every irreducible component of $Z\cap f^{-n}(V)$ is contained in one of the section images and has the same dimension, so they are equal. Consequently, the normalization of $Z\cap f^{-n}(V)$ is simply a disjoint union of these section images. Note that this normalization is $(g^n)^{-1}(\nu^{-1}(V))$, and that $g^n$ maps each of its components biholomorphically onto $\nu^{-1}(V)$. Thus $\operatorname{Br}(g^n)\subset X\setminus\nu^{-1}(U)$ for every $n\geq 1$. We now define
\[
B:=\Sing X\cup \overline{\bigcup_{n\geq1}\operatorname{Br}(g^n)}^{\mathrm{Zar}}.
\]
Since $\nu$ is an isomorphism over $U\subset Z_{\rm reg}$, the algebraic subset $B$ is contained in $X\setminus\nu^{-1}(U)$. 
Note that $g(\operatorname{Br}(g^n))\subset\operatorname{Br}(g^{n+1})$. Moreover, $g(\Sing X)\subset\Sing X\cup\operatorname{Br}(g)$ because a local biholomorphism preserves smoothness. Hence  $g(B)\subset B$.
\end{proof}

\begin{lem}\label{lem:fixed-inverses}
Under the same setup of Lemma~\ref{lem:normalization-boundary}, let $q\in X_{\rm reg}$ satisfy $g(q)=q$, and assume that $g$ is locally biholomorphic at $q$. Then there is a connected neighborhood $U\subset X_{\rm reg}$ of $q$ such that, for every $n\geq0$, the inverse germ of $g^n$ fixing $q$ extends to a holomorphic map $b_n:U\to X$ satisfying \begin{equation}\label{eq:fixed-inverse-sections}
g^n\circ b_n=\Id_U,\quad b_n(q)=q,\quad
g\circ b_{n+1}=b_n,
\end{equation}
and that the family $(b_n)$ is normal. Moreover, for every neighborhood $V$ of $q$, there is a neighborhood $U_0\subset U$ of $q$ such that $b_n(U_0)\subset V$ for every $n\geq 0$.
\end{lem}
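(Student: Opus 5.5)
The plan is to construct the $b_n$ from a single inverse branch, obtain normality from Lemma~\ref{lem:inverse-normality} through $\nu$, and then deduce the uniform smallness from equicontinuity. Since $g$ is locally biholomorphic at the fixed point $q$, every iterate $g^n$ is also locally biholomorphic at $q$. There is therefore a unique inverse germ $b_n$ of $g^n$ with $b_n(q)=q$, and as germs $b_n=b_1^{\circ n}$. This gives $g\circ b_{n+1}=b_n$ and $g^n\circ b_n=\Id$ as soon as the $b_n$ are defined on a common domain. The whole difficulty is to extend these germs to one neighborhood $U$ that does not shrink with $n$. For this I would use the forward-invariant algebraic set $B\supset\Sing X\cup\bigcup_n\operatorname{Br}(g^n)$ of Lemma~\ref{lem:normalization-boundary}: all iterates have branch locus inside the fixed set $B$. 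This is exactly the mechanism used in Lemma~\ref{lem:inverse-extension}.

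\emph{Step 1 (uniform domain).} Choose a small coordinate ball $U\subset X_{\rm reg}$ centered at $q$. For each $n$, form the fiber product $W_n=U\times_{X}X$, with maps $\mathrm{incl}$ and $g^n$, and let $Z_n$ be its normalization, with finite map $\rho_n:Z_n\to U$. By the choice of $B$, the map $\rho_n$ is unramified over $U\setminus B$. The point $(q,q)$ is a smooth point of $W_n$ at which $\rho_n$ is a local isomorphism. I would take the irreducible component $Z_n^0$ through $(q,q)$ and show that $\rho_n\colon Z_n^0\to U$ is an isomorphism; then $b_n$ is the second projection composed with $\rho_n^{-1}$.

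If $q\notin B$, this is immediate once $U\cap B=\emptyset$: the restriction $\rho_n$ is a finite covering of a simply connected ball, so every sheet is a section. If $q\in B$, I would mimic Lemma~\ref{lem:inverse-extension}. Using the invariance of $B$ and the biholomorphic local action of $g$ at $q$, I would argue that every component of $Z_n$ over $U$ is unramified at the points lying over $q$. Purity of the branch locus, applied to the normal space $Z_n^0$ over the smooth ball $U$, then forces the sheet through $(q,q)$ to map isomorphically onto $U$. This step, controlling ramification near $q$ when $q\in B$ uniformly in $n$, is the main obstacle. If purity alone does not suffice, I would try a log-resolution and local power-coordinate argument in the spirit of Lemma~\ref{lem:local-power-coordinates}.

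\emph{Step 2 (normality).} On $U$ we have $f^n\circ(\nu\circ b_n)=\nu\circ g^n\circ b_n=\nu|_U$, which is a constant sequence. Lemma~\ref{lem:inverse-normality} then makes $(\nu\circ b_n)$ a normal family of maps into $\bP^N$. Since $\nu$ is finite, and an isomorphism near the points where it matters after shrinking $U$, this normality lifts to $(b_n)$. Concretely, a limit of $\nu\circ b_{n_j}$ has connected image in $Z$, and the $b_{n_j}$ stay in one sheet of $\nu$ over it by continuity from $b_n(q)=q$. The identities in~\eqref{eq:fixed-inverse-sections} pass to $U$ by the identity principle, since $U$ is connected.

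\emph{Step 3 (smallness).} A normal family of maps into a compact space is equicontinuous on compact subsets. Take a compact ball $K\subset U$ around $q$. Given a neighborhood $V$ of $q$, equicontinuity of $(b_n)$ at $q$ together with $b_n(q)=q$ provides $\delta>0$ such that $b_n$ maps the $\delta$-ball $U_0$ around $q$ into $V$ for all $n$.
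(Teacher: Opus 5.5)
Your Step 1 has a genuine gap in the case $q\in B$, and this case is the whole content of the lemma. To make $g^n$ biholomorphic from the component $Z_n^0$ of $(g^n)^{-1}(U)$ through $q$ onto $U$, uniformly in $n$, you must know that $Z_n^0$ contains no point of $(g^n)^{-1}(q)$ other than $q$. The degree of $Z_n^0\to U$ is then the local degree at $q$, namely $1$. Your argument does not establish this, for three reasons.

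First, the claim that every component is unramified at the points over $q$ is false. For the PCF map $g(z)=z^2-2$ on $\bP^1$, the repelling fixed point $q=2$ lies in $\operatorname{Br}(g^2)\subset B$, because $0$ is a critical point of $g^2$ with $g^2(0)=2$.

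Second, purity of the branch locus only says that the branch locus of $Z_n^0\to U$ is empty or a hypersurface inside $B\cap U$. It does not exclude a second, ramified point $q'\in Z_n^0$ over $q$: $q$ would simply lie in the branch locus, and nothing is contradicted.

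Third, $U$ cannot be an arbitrary small ball. Suppose some component of $B\cap U$ missed $q$. A double cover of $U$ branched only along that component would give a connected sheet through $q$ of degree $2$, with both points over $q$ unramified.

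The missing ingredient is a conic-neighbourhood statement adapted to $B$ at $q$, which the paper takes from \cite[Lem.~3.8]{Ueda1998}. Since $q$ is a smooth point, choose a hypersurface germ at $q$ containing $B$. Because $X$ is normal and $\operatorname{Br}(g^n)\subset B$ for all $n$, one gets a connected neighbourhood $U\subset X_{\rm reg}$ of $q$ such that, for every $n\geq1$, every connected component of $(g^n)^{-1}(U)$ contains exactly one point of $(g^n)^{-1}(q)$. The degree-one conclusion follows at once.

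Your fallback via a log resolution and Lemma~\ref{lem:local-power-coordinates} gives this only in simple normal crossing charts. Bringing it back down to $X$ at $q$ would need a further argument, such as connectedness of the fibre of the resolution over $q$ and a retraction of its preimage in the cover, which you do not supply.

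Once $U$ is available, Steps 2 and 3 agree with the paper, with one correction. The lift of normality through $\nu$ should be done by Lemma~\ref{lem:finite-normality}. Your ``one sheet'' argument is not justified as stated, because $\nu$ need not be injective over $\Sing Z$ and nothing yet keeps $b_n(U)$ near $q$.
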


\begin{proof}
Let $B\subset X$ be as in Lemma~\ref{lem:normalization-boundary}. As $q$ is a smooth point, we can find a germ of analytic hypersurface containing $B$ at $q$. This allows us to apply \cite[Lem.~3.8]{Ueda1998}. Since $X$ is normal and $\operatorname{Br}(g^n)\subset B$ for every $n\geq 1$, \cite[Lem.~3.8]{Ueda1998} implies that there is a connected neighborhood $U\subset X_{\rm reg}$ of $q$ such that every connected component of $(g^n)^{-1}(U)$ contains exactly one point of $(g^n)^{-1}(q)$, for every $n\geq 1$.

Let $U_n$ be the connected component of $(g^n)^{-1}(U)$ containing $q$. Then the finite map $g^n:U_n\to U$ is biholomorphic. Let $b_n$ be its inverse and let $b_0:=\Id_U$. As $f^n\circ\nu\circ b_n=\nu\vert_U$, the family $(\nu\circ b_n)$ is normal by
\cite[Thm.~2.1]{Ueda1998}. Thus $(b_n)$ is normal by Lemma~\ref{lem:finite-normality}. Since $b_n(q)=q$ for every $n$, the last assertion follows from equicontinuity at $q$.
\end{proof}

\subsection{Rank reduction and constant limits}\label{subsec:rank-reduction}

Let $S\subset\bP^N$ be a positive-dimensional irreducible algebraic subvariety satisfying $f(S)=S$.

\begin{lem}\label{lem:no-identity-returns}
There is no nonempty analytic open subset $V\subset S_{\rm reg}$ and sequence of positive integers $a_j$ such that $f^{a_j}\vert_V$ converges to $\Id_V$ locally uniformly.
\end{lem}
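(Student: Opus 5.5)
The plan is to argue by contradiction and turn the near-identity behaviour on $V$ into a global single-valued inverse of $f^{a_j}\vert_S$. That is impossible for a degree reason. Let $k:=\dim S\geq1$ and $d:=d_f$. Since $f^*\cO(1)=\cO(d)$ and $f(S)=S$, the finite map $f\vert_S:S\to S$ has topological degree $d^k\geq 2$. Hence $f^{a}\vert_S$ has degree $d^{ka}\geq2$ for all $a\geq1$.

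Suppose $f^{a_j}\vert_V\to\Id_V$ locally uniformly. Then the convergence also holds in $C^1$ on compacta, by Cauchy estimates in local charts of $S_{\rm reg}$. Fix a point $x_0\in V$, chosen generic in the sense of Lemma~\ref{lem:inverse-extension} applied to $X=S$. Also fix a small closed ball $\overline{W}\subset V$ around $x_0$. For $j$ large, $f^{a_j}$ is injective on $\overline W$ and $f^{a_j}(W)$ contains a fixed smaller ball $W'\ni x_0$. So there is a holomorphic section $\gamma_j:W'\to W$ of $f^{a_j}$ with $\gamma_j\to\Id$ on $W'$, and $\gamma_j$ is the unique section taking values in $W$.

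Next we continue $\gamma_j$ analytically. Let $S^\circ\subset S_{\rm reg}\setminus P_f$ be a dense Zariski-open subset contained in the set produced by Lemma~\ref{lem:inverse-extension}; over it every branch of $f^{-a_j}$ restricted to $S$ is given by local holomorphic sections. The fundamental group $\pi_1(S^\circ,x_0)$ is finitely generated, being that of a quasi-projective variety. Fix finitely many loops $\ell_1,\ldots,\ell_m$ based at $x_0$ generating it, each covered by a finite chain of small disks $B_0=W',B_1,\ldots,B_M=W'$ in $S^\circ$. Continue $\gamma_j$ along each chain as sections of $f^{a_j}$, which is possible by Lemma~\ref{lem:inverse-extension}. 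On each disk the continued sections form a normal family by Lemma~\ref{lem:inverse-normality}, applied with $\psi_j$ the fixed inclusion $B_i\hookrightarrow\bP^N$.

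Any subsequential limit of the continuations on $B_1$ agrees with $\Id$ on $B_0\cap B_1$, hence equals the inclusion on $B_1$ by the identity theorem. Inductively, along the chain every limit is the identity. Hence for $j$ large the continuation of $\gamma_j$ around $\ell_i$ returns with values inside $W$. Since the chains are finite and fixed, one threshold $j_0$ works for all $i$. By uniqueness of the section with values in $W$, the monodromy of $\gamma_j$ around each $\ell_i$ is trivial. Thus for $j\geq j_0$ the germ $\gamma_j$ extends to a single-valued holomorphic section $\Gamma_j:S^\circ\to S$ of $f^{a_j}$.

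Finally, $\Gamma_j(S^\circ)$ is a $k$-dimensional irreducible constructible-analytic piece of $S$, and it contains an open subset of $W\subset S$. So its closure is all of $S$, and $f^{a_j}$ restricted to it is injective. Therefore $f^{a_j}\vert_S$ has degree one, contradicting $d^{ka_j}\geq2$.

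The main obstacle is the middle step. We must obtain uniformity in $j$ of the trivial monodromy, and justify that all continuations along the fixed chains remain in a normal family whose limits are forced to be the identity. The plan is to control this with the finite generation of $\pi_1$ together with the identity-theorem propagation along fixed finite chains. A secondary point is to check that the section images really are open pieces of $S$ rather than of another component of $f^{-a_j}(S)$. This holds on the disk $W'$, where $\gamma_j$ maps into $S$, and then propagates by analytic continuation, since $S$ is closed and irreducible.
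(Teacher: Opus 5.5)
Your strategy is sound and differs from the paper's, but two steps need repair.

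\textbf{First repair: the set $S^\circ$.} You require $S^\circ\subset S_{\rm reg}\setminus P_f$. In the case that matters, this set is empty. Lemma~\ref{lem:rank-drop} applies the statement to the Zariski closure of the image of a Fatou limit map. By Proposition~\ref{prop:log-rank} this closure is a component of $E_{\pi,r}\subset P_f$. So there $S\subset P_f$, and your base point $x_0$ and your chains of disks do not exist.

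Fortunately nothing in your argument uses avoidance of $P_f$. Lemma~\ref{lem:inverse-extension} works at every point of its connected dense Zariski-open set, including points of $P_f$. Each such point has a neighborhood $B$ over which $f^{-n}(B)$ is covered by finitely many sections. Any section germ at a point of $B$ coincides with one of them, since the coincidence loci are analytic and cover. So the germs of sections of $f^{a_j}$ form a covering space over that set. Take $S^\circ$ to be exactly that set.

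\textbf{Second repair: the final inference.} Your last step is not valid as written. Density of $\Gamma_j(S^\circ)$ together with injectivity of $f^{a_j}$ on it does not bound the degree; a degree-two map is injective on suitable countable dense sets. What you need is $\Gamma_j(S^\circ)=S\cap f^{-a_j}(S^\circ)$. This holds for three reasons:
\begin{enumerate}
\item $\Gamma_j(S^\circ)$ is the locus in $S\cap f^{-a_j}(S^\circ)$ where $\Gamma_j\circ f^{a_j}$ equals the identity, so it is a closed analytic subset.
\item It has dimension $\dim S$, because $\Gamma_j$ is injective.
\item $S\cap f^{-a_j}(S^\circ)$ is a nonempty Zariski-open subset of the irreducible $S$, hence an irreducible analytic space.
\end{enumerate}
Hence every point of $S^\circ$ has a unique preimage in $S$, which gives degree one.

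\textbf{Comparison with the paper.} With these repairs your proof works. The paper uses the same local ingredients: extension of inverse branches by Lemma~\ref{lem:inverse-extension}, normality by Lemma~\ref{lem:inverse-normality}, and the identity theorem forcing every limit to be $\Id$. The difference is that it propagates the \emph{forward} convergence rather than the inverse sections. It shows that the set $\cR$ of points near which $f^{a_j}\to\Id$ is open and closed in the connected set of Lemma~\ref{lem:inverse-extension}, by inverting the extended branches $\beta_j$ on a neighborhood $B$.

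Because $f^{a_j}$ is single-valued, no monodromy has to be controlled. Finite generation of $\pi_1$ and the uniform threshold $j_0$ are therefore unnecessary. The contradiction is also immediate: $f(x)=f(y)$ gives $f^{a_j}(x)=f^{a_j}(y)$ for all $j$, hence $x=y$ in the limit. So $f$ itself is injective on a dense Zariski-open subset of $S$.

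Your chain argument already shows that the continued sections converge to the identity near every point of $S^\circ$. Inverting them gives the paper's conclusion directly, so the monodromy step and the global section $\Gamma_j$ are superfluous. What your route buys is a stronger intermediate statement: a single-valued holomorphic inverse branch of one iterate $f^{a_j}\vert_S$ over a Zariski-open set. The cost is the extra covering-space and fundamental-group input.
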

\begin{proof}
Suppose for contradiction that such a sequence exists. By Lemma~\ref{lem:inverse-extension} there is a connected dense Zariski-open subset $X\subset S_{\rm reg}$ such that every point has a neighborhood over which the inverse images under all iterates are holomorphic section images. Let $\cR\subset X$ consist of the points having a neighborhood on which $f^{a_j}$ converges locally uniformly to the identity. It is open and contains $V\cap X$.

Let $x\in\overline{\cR}\cap X$, and let $B\Subset X$ be a sufficiently small neiborhood such that the sections in Lemma~\ref{lem:inverse-extension} are defined near $\overline B$. Let $y\in B\cap\cR$. For large $j$, there are inverse branches $\beta_j$ of $f^{a_j}$ on a fixed neighborhood of $y$, with $\beta_j\to\Id$ locally uniformly. Each $\beta_j$ agrees with one section near $y$. Thus $\beta_j$ extends holomorphically near $\overline B$. 

By Lemma~\ref{lem:inverse-normality}, the sequence $(\beta_j)$ is normal. Every subsequential limit equals $\Id$ near $y$, hence on $\overline B$. Therefore $\beta_j\to\Id$ locally uniformly on $\overline B$. For large $j$, $\beta_j^{-1}$ is defined on $B$ and $f^{a_j}\vert_B=\beta_j^{-1}\vert_B\to\Id_B$ locally uniformly. Thus $x\in\cR$. Consequently $\cR$ is closed in $X$. As $X$ is connected, we have $\cR=X$.

If $x,y\in X$ satisfy $f(x)=f(y)$, then $f^{a_j}(x)=f^{a_j}(y)$ for every $j$, thus $x=y$ by passing to the limit. Hence $f\vert_X$ is injective. Thus $\deg(f\vert_S)=1$, contradiction to Lemma~\ref{lem:restriction-degree} which says $\deg(f\vert_S)=d_f^{\dim S}>1$. 
\end{proof}

\begin{lem}\label{lem:rank-drop}
Let $\varphi\in\cH_\Omega$ have generic rank $r\geq1$. Every accumulation point of $f^n\circ\varphi$ as $n\to\infty$ has generic rank at most $r-1$. 
\end{lem}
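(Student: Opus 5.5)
Let $\varphi\in\cH_\Omega$ have generic rank $r\ge1$ and set $X$ to be the Zariski closure of $\varphi(\Omega)$. By Proposition~\ref{prop:log-rank}, $X$ is irreducible of dimension $r$ with finite forward orbit, so some iterate $f^m$ satisfies $f^m(X)=X$. The plan is to argue by contradiction. Suppose some accumulation point $\psi$ of $(f^n\circ\varphi)$ has generic rank $r$; it cannot exceed $r$, since $f^n\circ\varphi$ takes values in $f^n(X)$, which has dimension $r$. Passing to a subsequence $n_j$ and splitting by residues mod $m$, we may assume $n_j=c+m k_j$. Replacing $\varphi$ by $f^c\circ\varphi$ (still in $\cH_\Omega$, with image closure $f^c(X)$, again $f^m$-invariant) and $f$ by $g:=f^m$, we reduce to the case $f(X)=X$ and $g^{k_j}\circ\varphi\to\psi$ with $\psi$ of generic rank $r$. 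Since $\psi(\Omega)\subset X$ and $\psi$ has rank $r=\dim X$, $\psi$ is an open map onto an open subset of $X$ near generic points.

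The key step is to produce identity returns on an open subset of $X_{\rm reg}$, contradicting Lemma~\ref{lem:no-identity-returns} applied to $S=X$ (which is positive-dimensional and $g$-invariant, and $g$ is PCF with the same Fatou set). Choose a point $z_0\in\Omega$ where both $\varphi$ and $\psi$ have rank $r$ and take values in the good Zariski-open set of Lemma~\ref{lem:inverse-extension}. Locally near $z_0$ pick an $r$-dimensional complex submanifold $\Sigma\ni z_0$ such that $\varphi|_\Sigma$ and $\psi|_\Sigma$ are biholomorphisms onto open sets $V\subset X_{\rm reg}$ and $W\subset X_{\rm reg}$. Then on $\Sigma$ we have $g^{k_j}\circ\varphi\to\psi$, i.e.\ $g^{k_j}|_V\to \psi\circ(\varphi|_\Sigma)^{-1}=:\theta$ locally uniformly on $V$, where $\theta:V\to W$ is biholomorphic. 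Next take differences of exponents: set $a_j:=k_{j+1}-k_j\to\infty$ after thinning. Since $g^{k_{j+1}}=g^{a_j}\circ g^{k_j}$, on $\theta$-preimages of a small ball $W'\Subset W$ we get $g^{a_j}\circ g^{k_j}\to\theta$ while $g^{k_j}\to\theta$; to conclude $g^{a_j}|_{W'}\to\Id$ I use equicontinuity: $\{g^n|_X\}$ restricted to the images $g^{k_j}(V)$ converging to $W$ — more cleanly, use that $\{g^{a}\circ\varphi\}$ is normal on $\Omega$, so after a further subsequence $g^{a_j}$ converges on $\psi(\Sigma)=W$ via $g^{a_j}\circ\psi=\lim_i g^{a_j+k_i}\circ\varphi$. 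Concretely, I will instead pick the subsequence so that $g^{k_{j+1}-k_j}\circ\psi$ converges (normality on $\Omega$) to some $\chi$, and show $\chi=\psi$ on $\Sigma$ by a diagonal argument using $g^{k_{j+1}}\circ\varphi\to\psi$ and uniform continuity of $g^{a_j}$ near $\psi(\Sigma)$ — this uniform continuity is exactly what normality of $g^{a_j}\circ\psi$ on $\Omega$, together with $\psi|_\Sigma$ being a biholomorphism onto $W$, provides. Then $g^{a_j}|_W=(g^{a_j}\circ\psi)\circ(\psi|_\Sigma)^{-1}\to\Id_W$.

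The main obstacle is this last transfer: one only knows equicontinuity of iterates on $\Omega$ through $\varphi$ or $\psi$, not on $X$ itself, and need $g^{a_j}(g^{k_j}\varphi(z))$ close to $g^{a_j}(\psi(z))$. I expect to handle it by the standard trick: since $\psi(\Sigma)$ sits inside Fatou-limit images, choose indices $k_{j}$ so that $g^{k_j}\circ\varphi$ is within $\epsilon_j$ of $\psi$ and $a_j$ large with $g^{a_j}\circ\psi\to\chi$; equicontinuity of $\{g^n\}$ on a neighborhood of $\psi(\Sigma)$ in $X$ can alternatively be obtained from Lemma~\ref{lem:inverse-normality} using the holomorphic sections of Lemma~\ref{lem:inverse-extension} (inverse branches are normal, and $g^{k_j}|_V$ are inverses of sections converging to $\theta$), giving $g^{a_j}=\beta\circ\ldots$ expressions that converge. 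Once $g^{a_j}|_W\to\Id_W$, Lemma~\ref{lem:no-identity-returns} gives the contradiction.
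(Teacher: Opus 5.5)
Your argument is correct and reaches the same contradiction as the paper: identity returns on an open subset of $X_{\rm reg}$, which Lemma~\ref{lem:no-identity-returns} rules out. The difference is in how you control the difference iterates $g^{a_j}$, $a_j=k_{j+1}-k_j$.

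\textbf{The paper's route.} The paper inverts the approximating maps. It takes right inverses $\alpha_j$ of $f^{m_j}\circ\varphi$ on a common polydisk $V\subset X_{\rm reg}$, converging to a right inverse $\alpha$ of $\psi$. It then writes
\[
f^{m_{j+1}-m_j}\vert_V=(f^{m_{j+1}}\circ\varphi)\circ\alpha_j\to\psi\circ\alpha=\Id_V,
\]
so no equicontinuity on $X$ is ever needed.

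\textbf{Your route.} You invert only the limit $\psi\vert_\Sigma$ and use compactness and forward invariance of $\cH_\Omega$.
\begin{enumerate}
\item The family $\{g^a\circ\psi\}_{a\geq0}$ is normal, so $\{g^a\vert_W\}=\{(g^a\circ\psi)\circ(\psi\vert_\Sigma)^{-1}\}$ is normal on $W=\psi(\Sigma)$. Hence it is uniformly equicontinuous on compact subsets of $W$.
\item The maps $g^{k_j}\circ\varphi$ take values in $X$; this is where your reduction to $g(X)=X$ is used. Since $W$ is open in $X$, for $z$ in a compact $K\subset\Sigma$ the points $g^{k_j}\varphi(z)$ and $\psi(z)$ eventually lie in one compact subset of $W$.
\item Therefore $g^{a_j}\circ\psi$ and $g^{k_{j+1}}\circ\varphi$ have the same limit $\psi$ on $K$, which gives $g^{a_j}\vert_W\to\Id_W$.
\end{enumerate}
So the ``main obstacle'' you flag is already resolved by the normality argument you state. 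The biholomorphy of $\varphi\vert_\Sigma$, the good Zariski-open set of Lemma~\ref{lem:inverse-extension}, and the alternative via inverse sections are not needed.

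\textbf{Comparison.} The paper's route is shorter: one composition identity, with no Ascoli step and no extra extraction. Yours avoids a uniform inverse function theorem for the varying maps $f^{m_j}\circ\varphi$.

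\textbf{A small slip.} ``Finite forward orbit'' only makes $X$ preperiodic, so it does not by itself give $f^m(X)=X$. Either choose the shift $c$ large enough that $f^c(X)$ is periodic, or do as the paper does. There, one passes to a subsequence along which all $f^{n_j}\circ\varphi$ have the same image closure; that closure is then automatically periodic.
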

\begin{proof}
Rank cannot increase under locally uniform convergence. Suppose for contradiction that a subsequence $(f^{m_j}\circ\varphi)$ converges to $\psi$ and $\psi$ has generic rank $r$. By Proposition~\ref{prop:log-rank}, up to a taking further a subsequence, all $f^{m_j}\circ\varphi$ take values in one irreducible variety $X$ of dimension $r$ such that $f^q(X)=X$ and $m_j\equiv m_1\pmod q$ for some $q\geq 1$. By the implicit function theorem, there are a nonempty polydisk $V\subset X_{\rm reg}$ and right inverses $\alpha_j:V\to\Omega$ of $f^{m_j}\circ\varphi$ converging locally uniformly to a right inverse $\alpha$ of $\psi$. Let $a_j=(m_{j+1}-m_j)/q\geq1$. Then, locally uniformly on $V$,
\begin{equation*}
(f^q)^{a_j}\vert_V =(f^{m_{j+1}}\circ\varphi)\circ\alpha_j
\to\psi\circ\alpha=\Id_V. \end{equation*}
This contradicts Lemma~\ref{lem:no-identity-returns}.
\end{proof}

By compactness, every nonempty compact subset of $\cH_\Omega$ which is forward invariant under $H\mapsto f\circ H$ contains a minimal nonempty forward invariant compact subset.

\begin{thm}\label{thm:periodic-constant}
Every minimal nonempty compact subset of $\cH_\Omega$ which is forward invariant under $H\mapsto f\circ H$ is a periodic orbit of constant maps. In particular, $\cH_\Omega$ contains a constant map whose value is periodic and belongs to $E_\pi$.
\end{thm}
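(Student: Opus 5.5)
Let $\mathcal M\subset\cH_\Omega$ be a minimal nonempty compact subset that is forward invariant under $T:H\mapsto f\circ H$. The plan is to show first that every element of $\mathcal M$ is constant, using Lemma~\ref{lem:rank-drop}. Then we use finiteness of $E_\pi$ (Proposition~\ref{prop:log-rank}) to conclude that $\mathcal M$ is a finite periodic orbit.

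\emph{Step 1: all elements of $\mathcal M$ are constant.} Let $r$ be the minimum of the generic ranks of elements of $\mathcal M$, and pick $\varphi\in\mathcal M$ of generic rank $r$. Since $f$ is finite, every $f^n\circ\varphi$ also has generic rank $r$. The set $\omega(\varphi)$ of accumulation points of $(f^n\circ\varphi)_{n\geq0}$ is nonempty (by compactness of $\cH_\Omega$), compact, forward invariant, and contained in $\mathcal M$. By minimality, $\omega(\varphi)=\mathcal M$. If $r\geq1$, Lemma~\ref{lem:rank-drop} shows that every element of $\omega(\varphi)$ has generic rank at most $r-1$. This contradicts the choice of $r$ as the minimal rank on $\mathcal M$. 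Hence $r=0$. Applying the same lemma to any $\psi\in\mathcal M$ with generic rank $\geq1$ would give $\mathcal M=\omega(\psi)$ consisting of maps of rank $<\operatorname{rank}\psi$. This does not immediately exclude higher-rank elements, so instead I argue directly. Since $r=0$, $\mathcal M$ contains a constant map $c_x$. By minimality, $\mathcal M$ is the closure of the forward orbit $\{c_{f^n(x)}\}$, which consists only of constants, because a limit of constants is constant. So every element of $\mathcal M$ is constant.

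\emph{Step 2: periodicity.} By Proposition~\ref{prop:log-rank}, every constant map in $\cH_\Omega$ takes its value in the finite set $E_\pi$. Hence $\mathcal M=\{c_y : y\in A\}$ for a finite set $A\subset E_\pi$ with $f(A)\subset A$. Then $x\in A$ has a finite forward orbit, so it is preperiodic, and $f^m(x)$ is periodic for some $m$. The periodic orbit $\{c_{f^{m+k}(x)}\}_k$ is a closed, forward invariant subset of $\mathcal M$. By minimality it equals $\mathcal M$, so $\mathcal M$ is a periodic orbit of constant maps.

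\emph{Step 3: the ``in particular''.} $\cH_\Omega$ itself is nonempty, compact and forward invariant, so by the remark preceding the theorem it contains a minimal such subset. By Steps 1–2 this subset consists of constant maps with periodic values in $E_\pi$.

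The main obstacle is Step 1. One must make sure that the rank-drop lemma applies to an element of minimal rank inside $\mathcal M$, which requires $\omega(\varphi)\subset\mathcal M$ and minimality forcing $\omega(\varphi)=\mathcal M$. Once $r=0$ is established, the rest is the finiteness of $E_\pi$.
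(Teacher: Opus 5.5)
Your proof is correct and follows the paper's approach: minimality forces $\omega(\varphi)=\mathcal M$, Lemma~\ref{lem:rank-drop} then forces constancy, and finiteness of $E_\pi$ together with minimality gives a periodic orbit. The detour in Step~1 is unnecessary, and your remark that the lemma "does not immediately exclude higher-rank elements" is mistaken: any $\psi\in\mathcal M$ lies in $\omega(\psi)=\mathcal M$, so the lemma applied to $\psi$ directly makes $\psi$ constant, which is exactly how the paper argues.
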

\begin{proof}
Let $\cM$ be such a minimal subset, and let $\varphi\in\cM$. The accumulation set of $(f^n\circ \varphi)$ is a nonempty compact
forward-invariant subset of $\cM$, so it equals $\cM$. In particular, $f^{m_j}\circ \varphi\to \varphi$ for some $m_j\to\infty$. By Lemma~\ref{lem:rank-drop}, $\varphi$ is constant. By Proposition~\ref{prop:log-rank}, all forward images of this constant
map have values in the finite set $E_\pi$. By minimality, $\cM$ is a periodic orbit of constant maps.
\end{proof}

\section{Germs, formal series, invariant direction and algebraization}\label{sec:local-invariant}

In this section, we establish the algebraization theorem of formal germs arising from the local dynamics near a periodic point $p$. More precisely, the tangent directions corresponding to nonzero multipliers of $p$ determine a unique invariant formal germ. We algebraize it and call it the \emph{algebraic core} (see Theorem~\ref{thm:algebraic-core}). From this we obtain the "superattracting or repelling" dichotomy statement unconditionally (cf. Proposition~\ref{prop:replling-direction}).

\subsection{Formal invariant direction}\label{subsec:formal}

Let $G:(\bC^n,0)\to(\bC^n,0)$ be a finite holomorphic map
germ whose differential has a nonzero eigenvalue. Let $r$ be the dimension of the sum of the generalized eigenspaces
of $d_0G$ for nonzero eigenvalues. Then $1\leq r\leq n$. Let $b\geq 1$ be an integer such that $(d_0G)^b$ vanishes on the generalized eigenspace for zero, and let $(x,y)\in\bC^r\times\bC^{n-r}$ be linear coordinates adapted to these two subspaces. Let $A$ be the invertible matrix of $(d_0G)^b$ on the first subspace. We also denote by $x$ and $y$ the coordinate projections, and define $G_1:=x\circ G^b$ and $G_2:=y\circ G^b$, so 
\[
G^b(x,y)=(G_1(x,y),G_2(x,y))
\]
and the coordinate functions of $G_1(x,y)-Ax$ and $G_2(x,y)$ vanish to order at least two.

Let $(x)\subset\bC[[x]]$ be the ideal generated by $x_1,\ldots,x_r$.
We seek formal series
$\widehat\varphi=(\widehat\varphi_1,\ldots,\widehat\varphi_{n-r})$,
with each $\widehat\varphi_i\in(x)^2$, satisfying
\begin{equation}\label{eq:series-identities}
G_2(x,\widehat\varphi(x))
=\widehat\varphi\bigl(G_1(x,\widehat\varphi(x))\bigr).
\end{equation}
Geometrically such $\widehat \varphi$ defines a $r$-dimensional $G$-invariant formal subspace. For each choice of such series $\widehat \varphi$, we define
\begin{equation}\label{eq:formal-h-definition}
\widehat h(x):=G_1(x,\widehat\varphi(x)), \quad
\widehat h^*:\begin{cases}
\bC[[x]]\to\bC[[x]]\\
u\mapsto u\circ\widehat h
\end{cases}
,\quad
\theta:\begin{cases}
\bC[[x,y]]\to\bC[[x]]\\
u\mapsto u(x,\widehat\varphi(x))
\end{cases}.
\end{equation} 
These homomorphisms are continuous in the maximal-ideal topologies. We denote by $\widehat{(G^b)^*}$ the continuous extension of $(G^b)^*$ from $\bC\{x,y\}$ to $\bC[[x,y]]$.
Let $I:=\ker\theta\subset\bC[[x,y]]$, and denote by $\fm=(x,y)$ the maximal ideal of $\bC[[x,y]]$.

\begin{lem}\label{lem:invariant-series}
Equation~\eqref{eq:series-identities} has a unique solution.
For this solution, $\widehat h^*$ is an automorphism, $\theta$ is surjective, and 
\begin{equation}\label{eq:substitution-homomorphism}
\ker\theta
=\bigl(y_1-\widehat\varphi_1(x),\ldots,y_{n-r}-\widehat\varphi_{n-r}(x)\bigr), \quad
\theta\circ\widehat{(G^b)^*}=\widehat h^*\circ\theta.
\end{equation}
Moreover, for every $k\geq 1$, we have 
\begin{equation}\label{eq:core-contraction}
\widehat{(G^{bk})^*}I\subset\fm^k I.
\end{equation}
\end{lem}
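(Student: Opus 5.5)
Existence and uniqueness of the solution come from solving \eqref{eq:series-identities} degree by degree in the grading of $\bC[[x]]$. Write $\widehat\varphi=\sum_{k\geq2}\varphi^{(k)}$ with $\varphi^{(k)}$ homogeneous of degree $k$. Since $G_1(x,y)=Ax+O(|(x,y)|^2)$, $G_2=O(|(x,y)|^2)$, and $\widehat\varphi\in(x)^2$, we compare the degree-$k$ parts of both sides.

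On the right-hand side, $\widehat\varphi(G_1(x,\widehat\varphi))$ has degree-$k$ part $\varphi^{(k)}(Ax)$ plus terms involving only $\varphi^{(j)}$ with $j<k$. On the left-hand side, $G_2(x,\widehat\varphi(x))$ has degree-$k$ part depending only on $\varphi^{(j)}$ for $j<k$. The reason is that $G_2$ has no linear part, because $(d_0G)^b$ kills the $y$-subspace and preserves the decomposition. Hence $y\mapsto(d_0G^b)y$ is zero, and a quadratic term evaluated at $\widehat\varphi$ only sees lower-order pieces of $\widehat\varphi$.

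So at degree $k$ we must solve $\varphi^{(k)}\circ A=R_k$, where $R_k$ is determined by earlier terms. Since $A$ is invertible, the operator $P\mapsto P\circ A$ is invertible on homogeneous polynomials of degree $k$, and we get $\varphi^{(k)}=R_k\circ A^{-1}$ uniquely. This is easier than the usual invariant-manifold problem: there are no resonance obstructions, because the competing linear term $(d_0G^b)|_y$ is zero.

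For the algebraic statements, $\widehat h(x)=Ax+O(2)$ has invertible linear part, so $\widehat h^*$ is an automorphism of $\bC[[x]]$. The map $\theta$ is surjective because it restricts to the identity on $\bC[[x]]$. For the kernel, write $u(x,y)$ as a series in $y_i-\widehat\varphi_i(x)$ by the change of variables $y'=y-\widehat\varphi(x)$, which is a formal automorphism of $\bC[[x,y]]$. Then $\theta$ becomes $y'\mapsto0$, and its kernel is $(y')$. The intertwining identity $\theta\circ\widehat{(G^b)^*}=\widehat h^*\circ\theta$ is checked on the generators $x_i,y_j$, after which it extends by continuity and multiplicativity:
\begin{itemize}
\item on $x_i$ it reads $G_{1,i}(x,\widehat\varphi)=\widehat h_i$, which is the definition of $\widehat h$;
\item on $y_j$ it reads $G_{2,j}(x,\widehat\varphi)=\widehat\varphi_j(\widehat h)$, which is exactly \eqref{eq:series-identities}.
\end{itemize}
In particular $\widehat{(G^b)^*}I\subset I$.

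Finally, \eqref{eq:core-contraction} follows by induction on $k$. The main point is the case $k=1$, namely $\widehat{(G^b)^*}(y_j-\widehat\varphi_j)\in\fm I$. Put $v_j=y_j-\widehat\varphi_j$; then $\widehat{(G^b)^*}v_j=G_{2,j}(x,y)-\widehat\varphi_j(G_1(x,y))$, which lies in $I$. In the coordinates $(x,y')$, expand this element to first order in $y'$. Its $y'$-linear coefficients are obtained by differentiating $G_2(x,y)-\widehat\varphi(G_1(x,y))$ in $y$ and then restricting to $y=\widehat\varphi(x)$. At the origin this derivative equals $\partial_yG_2(0)-d\widehat\varphi(0)\,\partial_yG_1(0)=0$, since $\partial_yG_2(0)=0$ and $d\widehat\varphi(0)=0$. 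So every coefficient lies in $(x)\subset\fm$. Higher-order terms in $y'$ lie in $I^2\subset\fm I$. Hence the image lies in $\fm I$. For general $k$, we use that $\widehat{(G^b)^*}$ is a local homomorphism with $\widehat{(G^b)^*}\fm\subset\fm$, and iterate:
\[
\widehat{(G^{b(k+1)})^*}I\subset \widehat{(G^b)^*}(\fm^kI)\subset\fm^k\cdot\fm I.
\]
The step I expect to need the most care is precisely this first-order computation showing that the conormal action of $G^b$ along the invariant formal germ vanishes at the origin. It relies on the linear part of $G^b$ being zero on the $y$-directions, and it must be carried out cleanly in the $(x,y')$ coordinates.
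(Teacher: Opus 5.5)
Your proposal is correct and follows essentially the same route as the paper. It uses the degree-by-degree recursion $\varphi^{(k)}=R_k\circ A^{-1}$, which meets no resonance obstruction because $G^b$ has no linear $y$-part. It obtains the kernel through the change of variables $y'=y-\widehat\varphi(x)$. It then proves \eqref{eq:core-contraction} from the vanishing at the origin of the first-order conormal coefficients of $\widehat{(G^b)^*}(y_j-\widehat\varphi_j)$, followed by induction using locality of $\widehat{(G^b)^*}$.

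The differences are only cosmetic. You check the intertwining identity on the generators $x_i,y_j$ and extend by continuity, while the paper computes it for arbitrary $u$. You read off the linear coefficients from a Taylor expansion in $y'$, while the paper writes $e_i\circ G^b=\sum_j a_{ij}e_j$ and deduces $a_{ij}(0)=0$ from $e_i\circ G^b\in\fm^2$.
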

\begin{proof}
We will write $\widehat\varphi=\sum_{m\geq2}\varphi_m$, where the
coordinates of $\varphi_m$ are homogeneous polynomials of degree $m$.
Suppose that $\varphi_2,\ldots,\varphi_{m-1}$ have been determined so that \eqref{eq:series-identities} holds in every degree less than $m$. We denote $\varphi_{<m}=\sum_{j=2}^{m-1}\varphi_j$, with the convention $\varphi_{<2}=0$. For a series $u$, we note by $[u]_m$ its
homogeneous part of degree $m$. 

We now try to find $\varphi_m$. By our choice of coordinates, every monomial involving $y$ in $G_1$ or $G_2$ has degree at least two. Therefore we get
\begin{equation}\label{eq:series-substitution-order}
G_i(x,\varphi_{<m}(x)+\varphi_m(x))
\equiv G_i(x,\varphi_{<m}(x))
\pmod{(x)^{m+1}},\qquad i=1,2.
\end{equation}

We denote $\psi(x):=G_1(x,\varphi_{<m}(x))$ and $\widetilde\psi(x):=G_1(x,\varphi_{<m}(x)+\varphi_m(x))$. We have
\begin{equation}\label{eq:series-composition-taylor}
\begin{aligned}
\varphi_{<m}(\widetilde\psi)-\varphi_{<m}(\psi)
&=\sum_{1\leq\lvert\alpha\rvert\leq m-1}
\frac{(\partial^\alpha\varphi_{<m})(\psi)}{\alpha!} (\widetilde\psi-\psi)^\alpha,
\\
\varphi_m(\widetilde\psi)-\varphi_m(Ax)
&=\sum_{1\leq\lvert\alpha\rvert\leq m}
\frac{(\partial^\alpha\varphi_m)(Ax)}{\alpha!} (\widetilde\psi-Ax)^\alpha,
\end{aligned}
\end{equation}
where $\alpha$ ranges over multi-indices in $\bZ_{\geq0}^r$.
By \eqref{eq:series-substitution-order}, every term in the first
sum in \eqref{eq:series-composition-taylor} belongs to $(x)^{m+1}$. 
Since $G_1(x,y)-Ax$ vanishes to order at least two and $\varphi_{<m}+\varphi_m$ vanishes at zero, we have that $\widetilde\psi-Ax\in(x)^2$.
For $1\leq\lvert\alpha\rvert\leq m$, the polynomial map $\partial^\alpha\varphi_m$ is homogeneous of degree $m-\lvert\alpha\rvert$, or is zero. Consequently $(\partial^\alpha\varphi_m)(Ax)\in(x)^{m-\lvert\alpha\rvert}$.
Therefore every term in the second sum in \eqref{eq:series-composition-taylor} belongs to $(x)^{m-\lvert\alpha\rvert}(x)^{2\lvert\alpha\rvert}\subset(x)^{m+1}$. We conclude that
\begin{equation}\label{eq:series-composition-degree}
\begin{aligned}
\varphi_{<m}(\widetilde\psi(x)) &\equiv\varphi_{<m}(\psi(x))
\pmod{(x)^{m+1}},\\
\varphi_m(\widetilde\psi(x)) &\equiv\varphi_m(Ax) \pmod{(x)^{m+1}}.
\end{aligned}
\end{equation}

By \eqref{eq:series-substitution-order} and \eqref{eq:series-composition-degree}, the equation in degree $m$ that we are trying to solve is therefore
\begin{equation}\label{eq:series-recursion}
\varphi_m(Ax)=
\Big[G_2(x,\varphi_{<m}(x))-
\varphi_{<m}\bigl(G_1(x,\varphi_{<m}(x))\bigr)\Big]_m.
\end{equation}
Note that the right-hand side depends only on the already constructed $\varphi_2,\ldots,\varphi_{m-1}$. Since $A$ is invertible, we can solve this equation and determine $\varphi_m$ uniquely by composing the right side with $A^{-1}$. By induction we thus proved that  \eqref{eq:series-identities} has a unique solution.

Since the linear part of $\widehat h$ is $Ax$, the formal inverse function theorem applies, and $\widehat h^*$ is an automorphism. The homomorphism $\theta$ is continuous because $\theta((x,y)^k)\subset(x)^k$ for every $k\geq 1$, and is surjective because it fixes $\bC[[x]]$.

For $u\in\bC[[x,y]]$, by definition of $\theta$ we have $\theta(u)=0$ exactly when $u(x,y+\widehat\varphi(x))$ belongs to $(y_1,\ldots,y_{n-r})$. We apply the inverse substitution $y\mapsto y-\widehat\varphi(x)$ and obtain the expression for $\ker\theta$ in \eqref{eq:substitution-homomorphism}. 
For every $u\in\bC[[x,y]]$, we have
\begin{align*}
\theta\bigl(\widehat{(G^b)^*}(u)\bigr)
&=u\bigl(G_1(x,\widehat\varphi(x)),G_2(x,\widehat\varphi(x))\bigr)\\
&=u\bigl(\widehat h(x), \widehat\varphi(\widehat h(x))\bigr)=\widehat h^*(\theta(u)).
\end{align*}
Thus $\theta\circ\widehat{(G^b)^*} =\widehat h^*\circ\theta$.

Finally, let $e_i=y_i-\widehat\varphi_i(x)$. By \eqref{eq:series-identities}, we have $e_i\circ G^b\in I$. Since $I=\ker\theta=(e_1,\ldots,e_{n-r})\subset\bC[[x,y]]$, there are $a_{ij}\in\bC[[x,y]]$ such that
\[
e_i\circ G^b=\sum_{j=1}^{n-r}a_{ij}e_j.
\]
We have $e_i\circ G^b=G_{2,i}-\widehat\varphi_i\circ G_1\in\fm^2$ and $e_j\equiv y_j\pmod{\fm^2}$. Comparing linear terms, we obtain $\sum_j a_{ij}(0)y_j=0$, so $a_{ij}(0)=0$ for every $j$. Hence $e_i\circ G^b\in\fm I$, and $u\circ G^b\in\fm I$ for every $u\in I$. Since $G^b(0)=0$, composition with $G^b$ preserves $\fm$. By induction, we obtain $u\circ G^{bk}\in\fm^kI$ for every $u\in I$ and $k\geq1$.
\end{proof}

\subsection{Algebraicity of the invariant direction}\label{subsec:algebraic-core}

We continue to use $f$ to denote a PCF endomorphism as in Section~\ref{sec:preparation}. 
Let $p$ be a periodic point of $f$ of minimal period $a\geq 1$, and let $E\subset T_p\bP^N$ be the sum of the generalized eigenspaces of $d(f^a)_p$ corresponding to its nonzero eigenvalues. Assume that $r:=\dim E>0$. 
\begin{thm}\label{thm:algebraic-core}
There are a positive integer $m$ divisible by $a$ and an irreducible $f^m$-invariant algebraic subvariety $W\subset\bP^N$ of dimension $r$. The analytic germ $(W,p)$ has a smooth irreducible component $M$ with $T_pM=E$, and $f^m$ induces an automorphism of $M$. On the normalization of $W$, the component $M$ corresponds to a smooth
fixed point of the map induced by $f^m$, and the normalization map
has injective differential there.
\end{thm}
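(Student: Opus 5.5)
The plan is to start from the unique formal invariant germ of Lemma~\ref{lem:invariant-series}, applied to $G=f^a$ in affine coordinates at $p$ (finite since $f$ is). This gives a formal smooth germ $\widehat M=\{y=\widehat\varphi(x)\}$ of dimension $r$, tangent to $E$. It satisfies $\theta\circ\widehat{(G^b)^*}=\widehat h^*\circ\theta$ with $\widehat h^*$ an automorphism. The goal is to show that $\widehat M$ is the formal completion of a smooth analytic branch of an algebraic variety of dimension $r$. Then $W$ is the Zariski closure of that branch, and $m$ is a suitable multiple of $ab$.

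\emph{Step 1: algebraicity of the ideal.} Let $\mathfrak p:=I\cap\cO^{\rm alg}_{\bP^N,p}$, a prime ideal, and let $W_0$ be the irreducible variety it defines. Then $\dim W_0\geq r$, since $\theta$ embeds $\cO^{\rm alg}/\mathfrak p$ into $\bC[[x]]$. We must show $\dim W_0=r$; equivalently, $\widehat M$ is not Zariski dense in any larger variety. This is the main obstacle. My approach follows the strategy announced in the introduction.

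Suppose $\dim W_0>r$, and take a polynomial $P$ in the defining ideal of an $r$-dimensional candidate. Alternatively, argue by counting: the vector space of polynomials of degree $\leq k$ restricted to $\widehat M$ grows like $k^{\dim W_0}$. To produce enough vanishing, use the PCF structure. Take formal arcs $\widehat\gamma:\Spec\bC[[s]]\to\widehat M$ and their images under the expanding-invertible map $\widehat h$. Inverse branches of $f^{abk}$ along $\widehat M$ exist formally because $\widehat h^*$ is an automorphism. The inclusion $\widehat{(G^{bk})^*}I\subset\fm^kI$ says these inverse arcs stay on $\widehat M$ with increasing contact.

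The delicate point is to realize these arcs by genuine holomorphic arcs. For this, lift holomorphic discs $\sigma:\bD\to\bP^N$ through $f^{n}$ via the unramified covering over $U_f$, as in the proof of Proposition~\ref{prop:log-rank}, or via Lemma~\ref{lem:inverse-extension} on the log resolution $\pi:Y\to\bP^N$. One prescribes the contact orders of $\sigma$ with the simple normal crossing divisor $D$, so that the lifts have a fixed ramification pattern. By Lemma~\ref{lem:inverse-normality} these lifts form a normal family. Passing to the limit yields holomorphic arcs through $p$ agreeing with $\widehat M$ to arbitrarily high order.

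A polynomial vanishing on $W_0$ restricted to these arcs then gives the vanishing needed to force every $u\in\mathfrak p$-complement relation to vanish on an $r$-dimensional analytic family. Hence $\widehat M$ is contained in the completion of an $r$-dimensional algebraic set, and $\dim W_0=r$.

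\emph{Step 2: analytic branch and invariance.} Since $\dim W_0=r=\dim\widehat M$, the completion of $(W_0,p)$ has $\widehat M$ as a minimal prime component. By Artin approximation/analytic-formal correspondence of irreducible components (completion of analytic local ring is faithfully flat, branches correspond to minimal primes of the completion up to normalization), there is an analytic irreducible component $M$ of $(W_0,p)$ whose completion is $\widehat M$. Hence $M$ is smooth with $T_pM=E$. Invariance: $(f^{ab})(W_0)$ is irreducible of dimension $r$ and contains the germ $f^{ab}(M)\supset M$ (formally $\widehat{(G^b)^*}I\subset I$), so $f^{ab}(W_0)\supset W_0$, equality by dimension. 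Set $W=W_0$, $m=ab$. Then $f^m$ maps $M$ into $M$ with differential $A$ invertible, giving an automorphism of the germ $M$.

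\emph{Step 3: normalization.} Let $\nu:X\to W$ be the normalization and $g$ the map induced by $f^m$ (Lemma~\ref{lem:normalization-boundary}). The branch $M$ corresponds to a unique point $q\in\nu^{-1}(p)$ whose germ normalizes $M$. Since $M$ is smooth, hence normal, $\nu$ restricted near $q$ is an isomorphism onto $M$. Therefore $q$ is a smooth point and $d_q\nu$ is injective. Because $f^m(M)=M$, we have $g(q)=q$, so $q$ is a smooth fixed point.
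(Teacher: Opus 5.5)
\textbf{Step 1 of your proposal has a genuine gap.} Your Steps~2 and~3 are acceptable modulo routine justifications. Once $\dim W_0=r$ is known, $I$ is a minimal prime of $\widehat{\cO}_{W_0,p}$. Since analytic local rings are excellent and Henselian, $I$ comes from an analytic branch $M$ with completion $\bC[[x]]$, so $M$ is smooth, and the normalization is an isomorphism near the point over this normal branch. This is a legitimate alternative to the paper's route via Zariski's normality of $\widehat{\cO}_{X,q}$ and surjectivity of $\widehat\kappa^*$.

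The gap is Step~1, which is the actual content of the theorem. Your sketch does not prove it, and in fact points the wrong way. Arcs through $p$ that agree with $\widehat M$ to high order are easy to produce and prove nothing. Elements of $\mathfrak p$ vanish on $\widehat M$ by definition. What must be shown is that $W_0$ is no bigger than $\widehat M$.

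The mechanism (Lemma~\ref{lem:inverse-arc-rank}) uses \eqref{eq:core-contraction} the other way round from your reading. It does not say that inverse arcs stay close to $\widehat M$. It says the following: if an arc $\widehat J$ centered at $p$ has, for every $n$, a formal lift $\widehat\beta_n$ through $F^n$ \emph{again centered at $p$}, then $e(\widehat J)=(e\circ F^n)(\widehat\beta_n)\in(t)^{n+1}$ for all $e\in I$, so $\widehat J$ lies on $\widehat M$. What you need is therefore a holomorphic family of arcs \emph{inside $W_0$}, of generic rank $\dim W_0$, each liftable through every $F^n$ with center $p$. Such a family forces $\dim W_0\leq r$.

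Producing that family (Lemma~\ref{lem:core-inverse-family}) is where postcritical finiteness enters, and the tools you name cannot do it. In the nontrivial case $r<N$ one has $p\in P_f$, since otherwise $d_pf^a$ would be invertible.
\begin{itemize}
\item The discs in the proof of Proposition~\ref{prop:log-rank} avoid $P_f$, so they cannot pass through $p$.
\item An arc through $p$ cannot be lifted through the covering over $U_f$. Even a lift on the punctured disc may fail because of monodromy, or may close up at another preimage of $p$.
\item Lemma~\ref{lem:inverse-extension} only applies at general points of a subvariety, not at $p$.
\item Normal limits via Lemma~\ref{lem:inverse-normality} play no role here.
\end{itemize}

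The paper instead works on the normalization $\nu:X\to W_0$.
\begin{itemize}
\item The formal graph lifts to a dominant $\kappa:\Spec\bC[[x]]\to X$ with $g\circ\kappa=\kappa\circ\Spec(\widehat h^*)$.
\item Restricting $(\widehat h^*)^{-n}\circ\widehat\kappa^*$ to a generic line $x=tv$ gives formal arcs $\gamma_n$ with $g^n\circ\gamma_n=\gamma_0$, all centered at the fixed point $q$.
\item $\gamma_0$ generically avoids the single proper subset $B$ which, by Lemma~\ref{lem:normalization-boundary}, contains the branch loci of all $g^n$ simultaneously.
\item The valuative criterion lifts $\gamma_0$ to a log resolution of $(X,B)$, and $\gamma_n$ to the normalized fiber products $Y_n$. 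This records the contact orders $a_i$ with the snc divisor.
\item Lemma~\ref{lem:arc-transfer} then lifts every holomorphic arc with the same contact orders to the point $\xi_n$ over $q$.
\item The family $u_i=\epsilon(c_i-\zeta_i)t^{a_i}$ for $i\leq k$ and $u_i=\epsilon c_it$ for $i>k$ has generic rank $\dim W_0$.
\end{itemize}

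Your alternative counting idea would need Bost--Hartshorne type vanishing estimates, which you do not supply. As written, the key inequality $\dim W_0\leq r$ is not established.
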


Let us explain the idea of Theorem~\ref{thm:algebraic-core}. If a formal arc through the fixed point admits inverse lifts through every iterate, all centered at that point, then \eqref{eq:core-contraction} forces every defining equation of the graph to vanish to arbitrarily high order along the arc. Thus the arc lies on the graph. Conversely, every formal arc on the graph admits such lifts because the induced formal map is invertible. For a holomorphic family of arcs with this lifting property, the formal graph equations imply that its generic rank is at most $r$. Postcritical finiteness supplies such a family whose generic rank equals the dimension of the Zariski closure of the graph, so this closure has dimension $r$.

When $r<N$, fix an affine chart of $\bP^N$ containing $p$. By translating $p$ to the origin and applying an invertible linear change of coordinates, we obtain affine coordinates $(x,y)\in\bC^r\times\bC^{N-r}$ adapted to $E$ and the generalized eigenspace of $d(f^a)_p$ for zero, as in Subsection~\ref{subsec:formal}. Let $G$ be the germ of $f^a$ in these coordinates, and let $b,\widehat\varphi,\widehat h$ be as in Lemma~\ref{lem:invariant-series}, with $n=N$. Let $m=ab$ and $F=f^m$. We use the homomorphism $\theta(u)=u(x,\widehat\varphi(x))$, the ideal $I=\ker\theta$, and the maximal ideal $\fm=(x,y)$ from Subsection~\ref{subsec:formal}.

The ring $\bC[x,y]$ is the coordinate ring of the above chosen affine chart. Let $W\subset\bP^N$ be the Zariski closure of the common zero set, in the affine chart $(x,y)$, of all polynomials in $I\cap\bC[x,y]$. Note that $I\cap\bC[x,y]\neq \bC[x,y]$ because $\theta (1)=1$. Let $s=\dim W$. Let $\sigma:\Spec\bC[[x]]\to W$ be the morphism of $\bC$-schemes induced by the restriction $\theta\vert_{\bC[x,y]}$. Let $\nu:X\to W$ be the normalization. 

\begin{lem}\label{lem:core-closure}
Assume that $r<N$.
The variety $W$ is irreducible, the morphism $\sigma$ is dominant, and $F(W)=W$. Moreover, $s\geq r$, and $\sigma$ has a unique lift through $\nu$.
\end{lem}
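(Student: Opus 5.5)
We work with the prime ideal $\mathfrak p:=I\cap\bC[x,y]=\ker(\theta\vert_{\bC[x,y]})$. Since $\bC[[x]]$ is a domain, $\mathfrak p$ is prime. It is also proper, because $\theta(1)=1$. Its zero set in the chart is therefore irreducible, and so is its Zariski closure $W$. By construction $\sigma$ factors through $\Spec(\bC[x,y]/\mathfrak p)$, and the induced map $\bC[x,y]/\mathfrak p\to\bC[[x]]$ is injective. Hence the generic point of $\Spec\bC[[x]]$ maps to the generic point of $W$, and $\sigma$ is dominant. For the bound $s\geq r$, we use that $\theta$ restricted to $\bC[x]\subset\bC[x,y]$ is the inclusion $\bC[x]\hookrightarrow\bC[[x]]$. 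Thus $\mathfrak p\cap\bC[x]=0$, and $x_1,\dots,x_r$ are algebraically independent in the function field of $W$. This gives $s=\operatorname{trdeg}\geq r$.

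For the invariance $F(W)=W$ we use the intertwining relation \eqref{eq:substitution-homomorphism}, namely $\theta\circ\widehat{(G^b)^*}=\widehat h^*\circ\theta$. Work in the affine chart near $p$. A polynomial $u$ vanishing on $F(W)$ near $p$ pulls back to $u\circ F$, which is a rational function regular at $p$ and vanishes on $W$. The cleaner route is to argue with the generic point: the composite $F\circ\sigma$ corresponds to the homomorphism $u\mapsto\theta(u\circ F)=\widehat h^*(\theta(u))$. Since $\widehat h^*$ is an automorphism of $\bC[[x]]$, we get $\ker(\theta\circ F^*)=\ker\theta$ on the local algebraic ring at $p$. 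Therefore $F\circ\sigma$ also sends the generic point to the generic point of $W$. Because $F$ is a morphism of $\bP^N$, the variety $F(W)$ is the closure of the image of the generic point of $W$, and this equals the closure of the image of $F\circ\sigma$, which is $W$ by dominance. One technical point is that $F^*u$ is only a regular function near $p$, not a polynomial. We handle this by using $\cO^{\rm alg}_{\bP^N,p}$ and the fact, recalled in the notation section, that $\theta$ extends to $\widehat\cO_{\bP^N,p}$. The kernel of $\theta$ on $\cO^{\rm alg}_{\bP^N,p}$ is the prime ideal of $W$ localized at $p$.

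For the lift through $\nu$, let $K$ be the function field of $W$. Dominance gives an injection $K\hookrightarrow\operatorname{Frac}\bC[[x]]$ extending $\bC[x,y]/\mathfrak p\hookrightarrow\bC[[x]]$. Take an affine open of $X$ over the chart; its coordinate ring is the integral closure $A'$ of $A:=\bC[x,y]/\mathfrak p$ in $K$. Elements of $A'$ are integral over $A$, so their images in $\operatorname{Frac}\bC[[x]]$ are integral over $\bC[[x]]$. Since $\bC[[x]]$ is a regular local ring, it is integrally closed, and these images lie in $\bC[[x]]$. This yields a homomorphism $A'\to\bC[[x]]$ extending $\theta\vert_{\bC[x,y]}$, which is a lift $\widetilde\sigma:\Spec\bC[[x]]\to X$ with $\nu\circ\widetilde\sigma=\sigma$.

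Uniqueness follows from the same picture. Any lift is determined on the generic point by the field embedding $K\to\operatorname{Frac}\bC[[x]]$, which is forced by $\sigma$. The homomorphism $A'\to\bC[[x]]$ is the restriction of that embedding, because $\bC[[x]]$ is a domain and $\nu$ is birational. The closed point of $\Spec\bC[[x]]$ then lands in $\nu^{-1}(p)$, at the point corresponding to the maximal ideal pulled back from $\bC[[x]]$.

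I expect the main obstacle to be the invariance step. The difficulty is passing rigorously between the formal intertwining relation and the algebraic statement $F(W)=W$, including the surjectivity of $F\vert_W$ and the handling of rational functions versus polynomials. The other steps are formal commutative algebra.
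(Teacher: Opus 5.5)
Your proposal is correct and follows essentially the same route as the paper: primality of $I\cap\bC[x,y]$ gives irreducibility, dominance and $s\geq r$; the relation $\theta(u\circ F)=\widehat h^{*}(\theta(u))$ on $\cO^{\rm alg}_{\bP^N,p}$ gives $F(W)=W$; and the universal property of normalization gives the lift. The differences are cosmetic: you conclude invariance by comparing kernels at the generic point instead of recording the identity of morphisms $F\circ\sigma=\sigma\circ\Spec(\widehat h^{*})$ (which the paper reuses in Lemma~\ref{lem:core-normalization}), and you unpack the universal property of normalization explicitly via the integral closedness of $\bC[[x]]$.
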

\begin{proof}
Since $\bC[[x]]$ is an integral domain, $I\cap\bC[x,y]$ is prime. Hence $W$ is irreducible, and its coordinate ring in the chosen affine chart embeds into $\bC[[x]]$. The generic point $(0)$ of $\Spec\bC[[x]]$ therefore maps to the generic point of $W$, so $\sigma$ is dominant. Since $\theta$ fixes $\bC[x_1,\ldots,x_r]$, we have $I\cap\bC[x_1,\ldots,x_r]={0}$. Hence the images of $x_1,\ldots,x_r$ in this coordinate ring are algebraically independent, so $s\geq r$. 

Recall that $p$ is a fixed point of $F$. The morphisms of schemes $F\circ\sigma$ and $\sigma\circ\Spec(\widehat h^{*})$ send the closed point of $\Spec\bC[[x]]$ to $p$. Since $\bC[[x]]$ is local, these morphisms are determined by their pullbacks on $\cO^{\rm alg}_{\bP^N,p}$. For $u\in\cO^{\rm alg}_{\bP^N,p}$, viewed as its own Taylor series in $\bC[[x,y]]$, the two pullbacks are $\theta(u\circ F)$ and $\widehat h^{*}(\theta(u))$, respectively. These are equal by \eqref{eq:substitution-homomorphism}. Hence 
\begin{equation}\label{eq:Fsigmasigmah}
F\circ\sigma=\sigma\circ\Spec(\widehat h^{*})
\end{equation} 
as morphims of $\bC$-schemes. Since $\Spec(\widehat h^{*})$ is an automorphism, $\sigma$ and $F\circ\sigma$ have the same image. The Zariski closure of the image of $\sigma$ being $W$, the Zariski closure of the image of $F\circ\sigma$ is $F(W)$. Hence $F(W)=W$.

Note that $\Spec\bC[[x]]$ is normal. The morphism $\sigma$ being dominant, it has a unique lift through $\nu$ by the universal property of normalization.
\end{proof}

Let $\kappa:\Spec\bC[[x]]\to X$ be the lift in Lemma~\ref{lem:core-closure}, and let $q$ be the image of the closed point. Let $g:X\to X$ be the finite surjective morphism induced by $F$, so $\nu\circ g=F\circ\nu$. See the commutative diagram below.
\[
\begin{tikzcd}
  & X \arrow[d, "\nu"] \arrow[loop right, "g"] \\
  \Spec\bC[[x]]
    \arrow[ur, "\kappa"]
    \arrow[r, "\sigma"']
    \arrow[loop left, "\Spec(\widehat h^{*})"]
  & W \arrow[loop right, "F"]
\end{tikzcd}
\]
Since $\kappa$ sends the closed point to $q$, the induced homomorphism $\kappa^{*}:\cO^{\rm alg}_{X,q}\to\bC[[x]]$ is local. As $\bC[[x]]$ is complete, it extends uniquely to a continuous local homomorphism
\begin{equation}\label{eq:core-completed-local-ring}
\widehat\kappa^{*}:\widehat{\cO}_{X,q}\longrightarrow\bC[[x]].
\end{equation}

\begin{lem}\label{lem:core-normalization}
Assume that $r<N$. The morphism $\kappa$ is dominant, $\nu(q)=p$, and $g(q)=q$. The homomorphism $\widehat\kappa^{*}$ is surjective.
\end{lem}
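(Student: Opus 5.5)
Each of the four assertions will be read off the diagram above, using Lemma~\ref{lem:core-closure} together with the facts that $\nu$ is finite and birational and $\widehat h^*$ is an automorphism.

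\emph{Dominance.} Since $\sigma=\nu\circ\kappa$ is dominant by Lemma~\ref{lem:core-closure}, the generic point of $\Spec\bC[[x]]$ maps to the generic point of $W$. The normalization $\nu$ is birational, and $X$ is irreducible because $W$ is. The only point of $X$ lying over the generic point of $W$ is the generic point of $X$. Hence $\kappa$ sends the generic point to the generic point of $X$, so $\kappa$ is dominant.

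\emph{The point $q$.} We have $\nu(q)=\sigma(\text{closed point})$. This is the point whose maximal ideal is $\theta^{-1}((x))\cap\bC[x,y]=(x,y)$, namely $p$.

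\emph{Fixedness of $q$.} Consider the two morphisms $g\circ\kappa$ and $\kappa\circ\Spec(\widehat h^*)$ from $\Spec\bC[[x]]$ to $X$. Composing either with $\nu$ gives the same morphism, by \eqref{eq:Fsigmasigmah}:
\[
\nu\circ g\circ\kappa=F\circ\sigma=\sigma\circ\Spec(\widehat h^*)=\nu\circ\kappa\circ\Spec(\widehat h^*).
\]
Both morphisms are dominant, since $g$ is finite surjective and $\Spec(\widehat h^*)$ is an automorphism. By the uniqueness of lifts of dominant morphisms from a normal scheme through the normalization (the argument already used in Lemma~\ref{lem:core-closure}), the two morphisms coincide. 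Evaluating at the closed point, which $\Spec(\widehat h^*)$ fixes, gives $g(q)=q$.

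\emph{Surjectivity of $\widehat\kappa^*$.} The composite $\widehat\kappa^*\circ\widehat\nu^*:\widehat\cO_{\bP^N,p}=\bC[[x,y]]\to\widehat\cO_{X,q}\to\bC[[x]]$ is the continuous extension of $\sigma^*$. By uniqueness of continuous extensions, this composite equals $\theta$. Lemma~\ref{lem:invariant-series} says $\theta$ is surjective, since it fixes every $x_i$. Hence the image of $\widehat\kappa^*$ contains $\bC[[x]]$, and $\widehat\kappa^*$ is surjective.

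For this last step, the main thing to verify is that $\widehat\nu^*$ makes sense at the completed level. Here $\nu$ is finite, so $\nu^*:\cO^{\rm alg}_{W,p}\to\cO^{\rm alg}_{X,q}$ is a local homomorphism and extends continuously to the completions. Equally, the map $\cO^{\rm alg}_{\bP^N,p}\to\cO^{\rm alg}_{X,q}$ is local and extends continuously. The identification of the composite with $\theta$ then holds on $\cO^{\rm alg}_{\bP^N,p}$ by construction of $\sigma$. It extends to all of $\bC[[x,y]]$ by density and continuity.

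I expect this bookkeeping to be the only delicate point. The rest is formal, apart from the uniqueness-of-lifts argument for $g(q)=q$, which requires the two compared morphisms to be dominant.
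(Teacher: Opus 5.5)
Your proposal is correct and follows essentially the same route as the paper. Dominance comes from $\nu$ being an isomorphism over the generic point of $W$, $\nu(q)=p$ from $\nu\circ\kappa=\sigma$, $g(q)=q$ from uniqueness of lifts of the dominant morphism $F\circ\sigma$ through the normalization, and surjectivity from $\widehat\kappa^*\circ\widehat\nu^*=\theta$ together with the surjectivity of $\theta$; your explicit check that $\sigma$ sends the closed point to $p$ and your density/continuity justification of that identity only spell out steps the paper leaves implicit.
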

\begin{proof}
Since $\nu\circ\kappa=\sigma$ and $\nu$ is an isomorphism over the generic point of $W$, the generic point of $\Spec\bC[[x]]$ maps under $\kappa$ to the generic point of $X$. In other words $\kappa$ is dominant. The maps $\sigma$ and $\kappa$ send the closed point of
$\Spec\bC[[x]]$ to $p$ and $q$, respectively. Since $\nu\circ\kappa=\sigma$, we have $\nu(q)=p$. Recall that $F\circ\sigma=\sigma\circ\Spec(\widehat h^{*})$ by \eqref{eq:Fsigmasigmah}. Together with $\nu\circ g=F\circ\nu$ and $\nu\circ\kappa=\sigma$, we deduce that $g\circ\kappa$ and $\kappa\circ\Spec(\widehat h^{*})$ both lift the dominant morphism $F\circ\sigma$ through $\nu$. Since $\Spec\bC[[x]]$ is normal, we obtain $g\circ\kappa=\kappa\circ\Spec(\widehat h^{*})$ by uniqueness of the lift. Then $g(q)=q$ because the automorphism $\Spec(\widehat h^{*})$ fixes the closed point.

Let $\widehat g^{*}:\widehat{\cO}_{X,q}\to\widehat{\cO}_{X,q}$ and
$\widehat\nu^{*}:\bC[[x,y]]\to\widehat{\cO}_{X,q}$ be the completions
of the local pullbacks induced at $q$ respectively by $g$ and by the composition $X\xrightarrow{\nu}W\hookrightarrow\bP^N$. 
We regard $\widehat{\cO}_{\bP^N,p}$ as $\bC[[x,y]]$ using the
coordinates $(x,y)$. The homomorphism $\cO^{\rm alg}_{\bP^N,p}\to\bC[[x]]$ induced by $\Spec\bC[[x]]\xrightarrow{\sigma}W\hookrightarrow\bP^N$ extends continuously to $\theta:\bC[[x,y]]\to\bC[[x]]$ by the definition of $\sigma$. From $g\circ\kappa=\kappa\circ\Spec(\widehat h^{*})$ and
$\nu\circ\kappa=\sigma$, we deduce that
\begin{equation}\label{eq:normalized-core-equivariance}
\widehat\kappa^{*}\circ\widehat g^{*}
=\widehat h^{*}\circ\widehat\kappa^{*},\qquad
\widehat\kappa^{*}\circ\widehat\nu^{*}=\theta.
\end{equation}
As $\theta$ is surjective by Lemma~\ref{lem:invariant-series}, \eqref{eq:normalized-core-equivariance} implies that $\widehat\kappa^{*}$ is surjective.
\end{proof}

Let $\Delta\subset\bC$ be a disk centered at zero, with coordinate $t$, and let $U\subset\bC^s$ be a nonempty polydisk with coordinates $c=(c_1,\ldots,c_s)$. Let $J:\Delta\times U\to\bP^N$ be a holomorphic map with image in the affine chart $(x,y)$ and $J(0,c)=p$ for every $c\in U$. 

For each fixed $c\in U$, consider the Taylor series at $t=0$ of
the functions $t\mapsto x_i(J(t,c))$, for $i=1,\ldots,r$, and those of $t\mapsto y_j(J(t,c))$, for $j=1,\ldots,N-r$. Let $\widehat J(t,c)$ be the ordered tuple of these $N$ formal power series, i.e.\ a vector valued formal series. Since $J(0,c)=p$ and the coordinates $(x,y)$ vanish at $p$, every series has zero constant term. Thus $\widehat J(t,c)\in(t)\bC[[t]]^N$. 

For $n\geq1$ and $\widehat\beta\in(t)\bC[[t]]^N$, we define the composition $F^n\circ\widehat\beta$ by substituting $\widehat\beta$ into the Taylor series of $F^n$ at $p$ in the coordinates $(x,y)$.

\begin{lem}\label{lem:inverse-arc-rank}
Assume that $r<N$.
If, for every $n\geq1$ and $c\in U$, the equation $F^n\circ\widehat\beta=\widehat J(t,c)$ has a solution $\widehat\beta$ in $(t)\bC[[t]]^N$, then $\operatorname{rank}(dJ)\leq r$ on $\Delta\times U$.
\end{lem}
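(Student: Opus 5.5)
The plan is to show that every arc $\widehat J(t,c)$ lies formally on the invariant graph $\{y=\widehat\varphi(x)\}$, and then to read off the rank bound from the graph equations.

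\emph{Step 1: every arc lies on the graph.} Recall that $F=f^m=f^{ab}$, which in the coordinates $(x,y)$ is the germ $G^b$. By \eqref{eq:core-contraction}, for every $u\in I$ and every $k\geq1$ we have $\widehat{(F^k)^*}u=u\circ F^k\in\fm^kI\subset\fm^k$.

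Fix $c\in U$. By hypothesis, for each $k$ there is $\widehat\beta_k\in(t)\bC[[t]]^N$ with $F^k\circ\widehat\beta_k=\widehat J(t,c)$. Since $\widehat\beta_k$ has no constant term, substitution of $\widehat\beta_k$ is a continuous homomorphism $\bC[[x,y]]\to\bC[[t]]$ that sends $\fm^k$ into $(t)^k$. Substitution is also associative: $u\circ(F^k\circ\widehat\beta_k)=(u\circ F^k)\circ\widehat\beta_k$. Hence
\[
u(\widehat J(t,c))=(u\circ F^k)(\widehat\beta_k)\in(t)^k \quad\text{for every } k,
\]
so $u(\widehat J(t,c))=0$ for every $u\in I$.

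Applying this to $u=e_j=y_j-\widehat\varphi_j(x)$ gives the formal identities
\[
y_j\circ\widehat J(t,c)=\widehat\varphi_j\bigl(x\circ\widehat J(t,c)\bigr),\qquad j=1,\ldots,N-r,
\]
for every $c\in U$.

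\emph{Step 2: promote to a parametrized identity.} I would regard $x\circ J$ and $y\circ J$ as elements of $\cO(U)[[t]]$, that is, as $t$-expansions whose coefficients are holomorphic in $c$. Since $x\circ J$ lies in $t\,\cO(U)[[t]]$, the composition $\widehat\varphi_j(x\circ J)$ is a well-defined element of $\cO(U)[[t]]$. Indeed, each $t$-coefficient involves only finitely many coefficients of $\widehat\varphi_j$, polynomially in the holomorphic coefficients of $x\circ J$. Evaluating at each $c$ recovers Step 1, so the identity $y_j\circ J=\widehat\varphi_j(x\circ J)$ holds in $\cO(U)[[t]]$.

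The derivations $\partial_t$ and $\partial_{c_i}$ act on $\cO(U)[[t]]$ and satisfy the formal chain rule. Applying them gives, for each coordinate $w\in\{t,c_1,\ldots,c_s\}$,
\[
\partial_w(y_j\circ J)=\sum_{k=1}^r\bigl(\partial_{x_k}\widehat\varphi_j\bigr)(x\circ J)\,\partial_w(x_k\circ J).
\]

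\emph{Step 3: conclude the rank bound.} By Step 2, the rows of the Jacobian matrix of $J$ in the coordinates $(x,y)$ corresponding to the $y_j$ are $\cO(U)[[t]]$-linear combinations of the $r$ rows corresponding to the $x_k$. Since $\cO(U)[[t]]$ is a commutative ring, every $(r+1)\times(r+1)$ minor of this matrix vanishes in $\cO(U)[[t]]$.

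Each such minor is a holomorphic function on $\Delta\times U$ whose expansion in $t$ at $t=0$ is identically zero. It therefore vanishes on $\{|t|<\epsilon\}\times U$, and hence on all of $\Delta\times U$ by the identity theorem, since $\Delta\times U$ is connected. Thus $\operatorname{rank}(dJ)\leq r$ everywhere on $\Delta\times U$.

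The main point needing care is Step 2. The series $\widehat\varphi$ may diverge, so one cannot simply say that $J$ maps into an analytic submanifold. The argument must stay in the ring $\cO(U)[[t]]$ and justify both the composition and the chain rule formally. I expect this to be routine, because $x\circ J\in t\,\cO(U)[[t]]$ and everything can be checked modulo $t^M$ for each $M$.
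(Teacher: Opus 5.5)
Your proof is correct and follows the same route as the paper. You use \eqref{eq:core-contraction} to force $e(\widehat J(t,c))=0$ for all $e\in I$, promote this to the identity $y\circ\widehat J=\widehat\varphi(x\circ\widehat J)$ in $\cO(U)[[t]]$, and conclude by formal differentiation; your Step~3 (the $(r+1)\times(r+1)$ minors vanish in $\cO(U)[[t]]$, hence on $\Delta\times U$) just spells out what the paper compresses into ``doing formal differentiation we get the rank bound.''
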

\begin{proof}
By Lemma~\ref{lem:invariant-series}, we have $e\circ F^n\in\fm^nI\subset\fm^{n+1}$ for every $e\in I$ and $n\geq1$. For $n\geq1$ and $c\in U$, let $\widehat\beta_{n,c}\in(t)\bC[[t]]^N$ satisfy $F^n\circ\widehat\beta_{n,c}=\widehat J(t,c)$. Then for every $e\in I$, we have
\[
e(\widehat J(t,c))=(e\circ F^n)(\widehat\beta_{n,c}(t))\in(t)^{n+1}.
\]
This holds for every $n\geq 1$, so $e(\widehat J(t,c))=0$.

Let $\cO(U)$ be the ring of holomorphic functions on $U$.
Since $J$ is holomorphic on $\Delta\times U$, every coefficient in the $t$-series defining $\widehat J(t,c)$ is a holomorphic function of $c\in U$. Let $\widehat J\in(t)\cO(U)[[t]]^N$ denote the tuple of series with these coefficient functions. 
For $e\in I$, the series $e(\widehat J)$ is defined in $\cO(U)[[t]]$ because the coefficient of each power of $t$ involves only finitely many terms of $e$ and $\widehat J$. Since $e(\widehat J(t,c))=0$ for every $c\in U$, all coefficient functions (with respect to $t$) of $e(\widehat J)$ vanish on $U$. 
For $1\leq i\leq N-r$, the series $y_i-\widehat\varphi_i(x)$ belongs to $I$. Applying the identity $e(\widehat J)=0$ with $e=y_i-\widehat\varphi_i(x)$, we obtain $y_i\circ\widehat J=\widehat\varphi_i(x\circ\widehat J)$. Thus $y\circ\widehat J=\widehat\varphi(x\circ\widehat J)$. Doing formal differentiation we get the rank bound.
\end{proof}

\begin{lem}\label{lem:core-inverse-family}
Assume that $r<N$.
There exists a holomorphic map $J:\Delta\times U\to\bP^N$ of generic rank $s$, with image in the affine chart $(x,y)$ and such that $J(0,c)=p$ for every $c\in U$, satisfying the following property. For every $n\geq1$ and $c\in U$, there exists a holomorphic map $\beta_{n,c}:\Delta\to\bP^N$ such that $\beta_{n,c}(0)=p$ and $F^n(\beta_{n,c}(t))=J(t,c)$ for every $t\in\Delta$.
\end{lem}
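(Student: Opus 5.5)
The plan is to build $J$ on the normalization $X$ of $W$, near the fixed point $q$ of $g$, and set $J=\nu\circ\alpha$. Holomorphic inverse lifts will come from formal inverse lifts. The PCF hypothesis enters only through Lemma~\ref{lem:normalization-boundary}: there is one proper algebraic subset $B\subset X$, containing $\Sing X$ and $\operatorname{Br}(g^n)$ for every $n$. So each $g^n$ is an unramified covering over $X\setminus B$, and the ramification boundary $B$ does not depend on $n$.

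\emph{Step 1 (formal arcs have formal inverse lifts).} For $c\in\bC^r$, let $\lambda_c:\Spec\bC[[t]]\to\Spec\bC[[x]]$ be the linear arc $x=tc$. By the relation $g\circ\kappa=\kappa\circ\Spec(\widehat h^*)$ from Lemma~\ref{lem:core-normalization}, and since $\widehat h^*$ is an automorphism, the arc $\kappa\circ\lambda_c$ has the formal lift $\kappa\circ\Spec((\widehat h^{*})^{-n})\circ\lambda_c$ through $g^n$. This lift is centered at $q$. Since $\kappa$ is dominant and $B$ is proper, $\kappa\circ\lambda_c$ is not contained in $B$ for generic $c$. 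Hence it has finite contact order with $B$, and more precisely with each local analytic branch of $B$ at $q$.

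\emph{Step 2 (approximation keeping contact orders).} Choose $k$ larger than all these contact orders. By Artin approximation applied to the germ $(X,q)$, I would take holomorphic arcs $\alpha:\Delta\to X$ with $\alpha(0)=q$ whose $k$-jet equals that of a generic formal arc $\kappa\circ\lambda$. I want a whole $s$-parameter holomorphic family $\alpha(t,c)$, so the parameter must range over an $s$-dimensional space even though the $\lambda_c$ give only $r$ parameters. To get this, I would add free higher-order terms (orders $>k$) in a local embedding of $(X,q)$, and take the family to be the image of a map $\Delta\times U\to X$ that sweeps out an open subset of $X$ (of dimension $s$). This gives generic rank $s$. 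Because $\nu$ is finite and bimeromorphic, $J=\nu\circ\alpha$ also has generic rank $s$, and its image lies in the affine chart after shrinking $\Delta$.

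\emph{Step 3 (holomorphic lifting).} Fix $c$ and $n$. The punctured disc $\alpha(\Delta^*,c)$ lies in $X\setminus B$. So $\alpha$ lifts through $g^n$ over $\Delta^*$ if and only if the loop $\alpha(\{|t|=\varepsilon\},c)$ has trivial monodromy in the covering $g^n$ over $X\setminus B$. Any lift then extends across $t=0$ by normality and finiteness of $g^n$, and lands in $(g^n)^{-1}(q)$.

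The key claim is that the free homotopy class of this small loop in $X\setminus B$, taken near $q$, depends only on the $k$-jet of the arc, where $k$ is controlled by the contact orders with $B$ and hence is independent of $n$. Since $B$ is fixed, this follows from a local conic-structure and tubular-neighbourhood argument around the branches of $B$ at $q$. Granting the claim, the holomorphic arc $\alpha$ has the same monodromy as the formal arc $\kappa\circ\lambda$. The formal arc lifts, and its lift is centered at $q$, so $\alpha$ lifts to some $\widetilde\alpha_{n,c}$ with $\widetilde\alpha_{n,c}(0)$ in the same local sheet, namely $q$. Setting $\beta_{n,c}=\nu\circ\widetilde\alpha_{n,c}$ gives $\beta_{n,c}(0)=p$ and $F^n\circ\beta_{n,c}=J(\cdot,c)$.

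\emph{Main obstacle.} The hard part is making Step 3 rigorous: that equal sufficiently high jets give equal monodromy for every covering $g^n$ at once. It is also not yet justified that a formal arc defines a monodromy class; I would define it via its truncations, which are convergent arcs. The first thing I would try is to pass to a log resolution of $(X,B)$ near $q$, where $B$ becomes a simple normal crossing divisor. There the local fundamental group of the complement is abelian, generated by meridians, and the class of the loop is given by the vector of contact orders with the components. Arcs not contained in the exceptional locus lift uniquely to the resolution, and the lifted contact orders are still determined by a finite jet.
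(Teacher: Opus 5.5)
Your architecture matches the paper's: one ramification boundary $B$ from Lemma~\ref{lem:normalization-boundary}, formal lifts through every $g^n$ from invertibility of $\widehat h^*$, then transfer to holomorphic arcs with the same contact data. But there is a genuine gap exactly at the step you flag, and the remedy you suggest does not close it.

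\textbf{Step 3.} The loop half of your key claim is fine once you pass to a log resolution $Y$. There, the class of a small loop in $P\setminus D\cong(\Delta^*)^k\times\Delta^{s-k}$ is just the vector of contact orders. The problem is the formal side of the comparison. A formal arc has no monodromy, and its truncations are not arcs on $X$. If you replace them by convergent arcs on $X$ with the same $k$-jet, you still do not know that these lift through $g^n$ with center $q$; that is precisely what must be proved. What is missing is an algebraic mechanism that turns the formal lift centered at $q$ into a condition on contact orders. The paper does this as follows.
\begin{enumerate}
\item Lift $\gamma_0$ to $Y$ by the valuative criterion. This gives a center $y_0$ and contact orders $a_i$.
\item Lift the pair $(\delta,\gamma_n)$, again by the valuative criterion, to the normalization $Y_n$ of the dominant part of $Y\times_{\pi,X,g^n}X$. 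This gives a point $\xi_n$ with $\rho_n(\xi_n)=y_0$ and $\tau_n(\xi_n)=q$.
\item Over an SNC polydisk $P$ at $y_0$, each connected component of $\rho_n^{-1}(P)$ is a quotient of the power map by a subgroup $G\subset(\mu_m)^k$, and has a unique point over $y_0$ (Lemma~\ref{lem:local-power-coordinates}).
\item The formal arc at $\xi_n$ forces $(e^{2\pi\mathrm{i}a_1/m},\ldots,e^{2\pi\mathrm{i}a_k/m})\in G$, via the functions $\varphi_c$ with $\varphi_c^m=\rho_n^*\prod_i u_i^{c_i}$ (Lemma~\ref{lem:arc-transfer}).
\end{enumerate}
Hence every holomorphic arc centered at $y_0$, avoiding $D$ off $0$, with contact orders $a_i$, lifts into the component of $\xi_n$ with center $\xi_n$. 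So $\beta_{n,c}(0)=\nu(\tau_n(\xi_n))=p$. Your phrase ``the same local sheet, namely $q$'' silently uses this unique-point property, which is part of what must be proved.

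You could instead apply Artin approximation to the lifting system $g^n\circ\beta=\alpha$ for each $n$. That gives convergent lifts centered at $q$, but for arcs that depend on $n$. Comparing them with one fixed family again needs the unique-point statement on $Y$.

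\textbf{Step 2.} This also needs repair. Adding free higher-order terms in an ambient embedding of $(X,q)$ takes the arcs off $X$. You also give no construction of an $s$-parameter family of arcs on $X$ with fixed $k$-jet that avoid $B$ for $t\neq0$ uniformly in the parameter. Once you work on $Y$, neither jets nor Artin approximation are needed: only the center $y_0$ and the contact orders matter. The paper takes $u_i=\epsilon(c_i-\zeta_i)t^{a_i}$ for $i\leq k$ and $u_i=\epsilon c_i t$ for $i>k$, with $\lvert\zeta_i\rvert>\sup_U\lvert c_i\rvert$. These arcs miss $D$ for all $t\neq0$ and all $c$. For fixed $t\neq0$ the map is biholomorphic in $c$ onto an open subset of $P$, which gives generic rank $s$ for $J=\nu\circ\pi\circ\Psi$.
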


\begin{rem}\label{rem:core-inverse-family}
The constant maps $J\equiv p$ and $\beta_{n,c}\equiv p$ satisfy the lift property trivially, but they have rank zero. The hard requirement of Lemma~\ref{lem:core-inverse-family} is one family $J$ of generic rank $s$. 

The required lifting property is not trivial for a non constant map. For example, let $T:\bC^2\to\bC^2$ be defined by $T(z,w)=(2z,w^2)$, so $T^n(z,w)=(2^nz,w^{2^n})$.
For each integer $L\geq0$, the arc $t\mapsto(0,t^{2^L})$
has the holomorphic lift $t\mapsto(0,t^{2^{L-n}})$ through
$T^n$ whenever $1\leq n\leq L$.
A lift through $T^{L+1}$ would require its second coordinate
to have vanishing order $1/2$ at zero, which is impossible.
In particular, the arc $t\mapsto(0,t)$ has no holomorphic lift
through $T$.

\end{rem}

\begin{rem}\label{rem:core-inverse-family-proof}
As the proof of Lemma~\ref{lem:core-inverse-family} is notationally heavy, we give a sketch here. 
In the proof of Lemma~\ref{lem:core-inverse-family}, postcritical finiteness is used through Lemma~\ref{lem:normalization-boundary}: the branch loci of all $g^n$ lie in one proper algebraic subset of $X$. 
The dominance of $\kappa$ allows a formal arc centered at $q$ which does not go through this subset, while the invertibility of $\widehat h$ supplies formal lifts through every $g^n$, all centered at $q$. 
The formal arcs lift by the valuation criterion of properness.
By Lemma~\ref{lem:arc-transfer}, holomorphic arcs with the same
center and contact orders as the formal arc on the resolution,
and avoiding the divisor away from zero, lift to certain covers. We vary coefficients independently in the $s$ local coordinates on the resolution, which will lead to the desired rank property.
\end{rem}

\begin{proof}[{Proof of Lemma~\ref{lem:core-inverse-family}}]
We apply first Lemma~\ref{lem:normalization-boundary} to $F$ and $W$,
and obtain a proper forward-invariant algebraic subset $B\subset X$ containing $\Sing X$ and the branch loci of all $g^n$, $n\geq 1$. 
Let $\pi:Y\to X$ be a projective log resolution morphism which is an
isomorphism outside $B$, such that $D=\pi^{-1}(B)$ a simple normal crossing divisor. Let $\mu:=\nu\circ\pi$. 
\[
\begin{tikzcd}
  D \arrow[r, hook] \arrow[d]
  & Y \arrow[d, "\pi"] \arrow[dr, "\mu"] & \\
  B \arrow[r, hook]
  & X \arrow[r, "\nu"'] & W
\end{tikzcd}
\]
Let $\eta\in\cO^{\rm alg}_{X,q}$ be a nonzero function which vanishes on $B$ near $q$. Since $\kappa$ is dominant, $\kappa^*(\eta)\neq 0$.
The first nonzero homogeneous term of $\kappa^*(\eta)$ is a
nonzero polynomial on $\bC^r$. Let $v\in\bC^r\setminus\{0\}$ be a point where this polynomial does not vanish, and let $\chi_v:\bC[[x]]\to\bC[[t]]$, where $t$ is a single variable, to be the continuous homomorphism defined by $x_i\mapsto tv_i$. 

For $n\geq 0$, the restriction of $\chi_v\circ(\widehat h^{*})^{-n}\circ\widehat\kappa^*$ to $\cO^{\rm alg}_{X,q}$ is a local homomorphism. Let $\gamma_n:\Spec\bC[[t]]\to X$ be the induced morphism. Then each $\gamma_n$ sends the closed point of $\Spec\bC[[t]]$ to $q$. We have $g^n\circ\gamma_n=\gamma_0$ by \eqref{eq:normalized-core-equivariance}.
\[
\begin{tikzcd}[column sep=large]
  \Spec\bC[[t]]
    \arrow[d, "\Spec(\chi_v)"']
    \arrow[ddr, bend left=20, "\gamma_n"] & \\
  \Spec\bC[[x]]
    \arrow[d, "{\Spec((\widehat h^{*})^{-n})}"'] & \\
  \Spec\bC[[x]] \arrow[r, "\kappa"']
  & X \arrow[loop right, "g^n"]
\end{tikzcd}
\]
Since $\chi_v(\kappa^*(\eta))\neq 0$ by construction, the morphism $\gamma_0$ sends the generic point $\Spec\bC((t))$ of $\Spec\bC[[t]]$ to somewhere outside $B$. The restriction of $\gamma_0$ to $\Spec\bC((t))$ therefore lifts uniquely through $\pi$. By the valuative criterion for properness~\cite[Th.~II.4.3]{Hartshorne},  this lift extends to a morphism $\delta:\Spec\bC[[t]]\to Y$ such that $\pi\circ\delta=\gamma_0$. Let $y_0\in Y$ be the image under $\delta$ of the closed point of $\Spec\bC[[t]]$, and let $\widehat\delta^*:\widehat{\cO}_{Y,y_0}\to\bC[[t]]$ be the continuous extension of
$\delta^*:\cO^{\rm alg}_{Y,y_0}\to\bC[[t]]$.
\[
\begin{tikzcd}
  & Y \arrow[d, "\pi"] \\
  \Spec\bC[[t]] \arrow[ur, "\delta"]
    \arrow[r, "\gamma_0"'] & X
\end{tikzcd}
\]
Let $P$ be an analytic polydisk neiborhood around $y_0$, with coordinates $u_1,\ldots,u_s$ centered at $y_0$, such that $D\cap P=\{u_1\cdots u_k=0\}$ for some $0\leq k\leq s$ and $\mu(P)$ lies in the affine chart $(x,y)$ of $\bP^N$. For $1\leq i\leq k$, let $a_i:=\operatorname{ord}_t\widehat\delta^*(u_i)$. These integers are positive and finite because a local algebraic equation of $D$, which is a unit multiple of $u_1\cdots u_k$ in $\cO_{Y,y_0}$, has nonzero image under $\delta^*$.

For $n\geq 1$, Define the fiber product $Q_n:=Y\mathbin{\times}_{\pi,X,g^n}X$ by using $\pi:Y\to X,g^n:X\to X$. Let
$Y_n$ be the normalization of the union of the irreducible
components of $Q_n$ dominating $Y$. Let $\rho_n:Y_n\to Y$ and $\tau_n:Y_n\to X$ be induced respectively by the first and second projections of $Q_n$, so that $\pi\circ\rho_n=g^n\circ\tau_n$.
The fiber product $Q_n$ is finite étale over the smooth variety $Y\setminus D$, thus unchanged by normalization. In other words $\rho_n$ is unramified outside $D$. Note that $Y_n$ has pure dimension $s$, so any irreducible component $Z$ of $\rho_n^{-1}(P)$ has
dimension $s$ and satisfies $\rho_n(Z)=P$.
\[
\begin{tikzcd}[column sep=large, row sep=small]
  Y_n \arrow[dr]
    \arrow[drr, bend left=15, "\rho_n"]
    \arrow[ddr, bend right=15, "\tau_n"'] & & & \\
  & Q_n \arrow[r] \arrow[d]
  & Y \arrow[d, "\pi"] & P \arrow[l, hook'] \\
  & X \arrow[r, "g^n"'] & X &
\end{tikzcd}
\]
The pair $(\delta,\gamma_n)$ defines a morphism $\Spec\bC[[t]]\to Q_n$ whose restriction to $\Spec\bC((t))$ lies over $Y\setminus D$ and therefore lifts in a unique way to $Y_n$. Then the lift extends to
$\lambda_n:\Spec\bC[[t]]\to Y_n$ by the valuative criterion for properness. We have $\rho_n\circ\lambda_n=\delta$ and
$\tau_n\circ\lambda_n=\gamma_n$.
Let $\xi_n$ be the image under $\lambda_n$ of the closed point of
$\Spec\bC[[t]]$. Then $\rho_n(\xi_n)=y_0$ and $\tau_n(\xi_n)=q$.
Let $\widehat\lambda_n^*:\widehat{\cO}_{Y_n,\xi_n}\to\bC[[t]]$
be the continuous extension of the local homomorphism induced
by $\lambda_n$ on $\cO^{\rm alg}_{Y_n,\xi_n}$. For $1\leq i\leq k$, we have $\widehat\lambda_n^*(u_i\circ\rho_n)=\widehat\delta^*(u_i)$,
so $\operatorname{ord}_t\widehat\lambda_n^*(u_i\circ\rho_n)=\operatorname{ord}_t\widehat\delta^*(u_i)=a_i$.
\[
\begin{tikzcd}[column sep=large, row sep=small]
  \Spec\bC[[t]] \arrow[dr, "\lambda_n"]
    \arrow[drr, bend left=15, "\delta"]
    \arrow[ddr, bend right=15, "\gamma_n"'] & & \\
  & Y_n \arrow[r, "\rho_n"] \arrow[d, "\tau_n"']
  & Y \arrow[d, "\pi"] \\
  & X \arrow[r, "g^n"'] & X
\end{tikzcd}
\]
For $1\leq i\leq k$, let $\zeta_i\in\bC$ satisfy $\lvert\zeta_i\rvert>\sup_{c\in U}\lvert c_i\rvert$. We fix a sufficiently small $\epsilon>0$ so that we can define $\Psi:\Delta\times U\to P$ by 
\begin{equation}\label{eq:weighted-core-arc}
u_i(\Psi(t,c)):= 
\begin{cases}
\epsilon(c_i-\zeta_i)t^{a_i},&1\leq i\leq k,\\
\epsilon c_i t,&k<i\leq s.
\end{cases}
\end{equation}
Let $J:=\mu\circ\Psi$. Then $J(0,c)=p$ for every $c\in U$.
\[
\begin{tikzcd}[column sep=large, row sep=small]
  \Delta\times U \arrow[r, "\Psi"]
    \arrow[ddrr, bend right=20, "J"']
  & P \arrow[r, hook] \arrow[ddr, "\mu|_P"]
  & Y \arrow[d, "\pi"] \\
  & & X \arrow[d, "\nu"] \\
  & & W
\end{tikzcd}
\]
Fix $n\geq 1$ and $c\in U$. By \eqref{eq:weighted-core-arc}, we have $\Psi(t,c)\notin D$ for $t\ne0$ and $\operatorname{ord}_{t=0}u_i(\Psi(t,c))=a_i$ for $1\leq i\leq k$.
By Lemma~\ref{lem:arc-transfer}, applied to
$\rho_n^{-1}(P)\to P$ and $\widehat\lambda_n^*$, there exists a
holomorphic map $\widetilde\Psi_{n,c}:\Delta\to Y_n$ such that
$\rho_n\circ\widetilde\Psi_{n,c}=\Psi(\,\cdot\,,c)$ and $\widetilde\Psi_{n,c}(0)=\xi_n$.
If we take $\beta_{n,c}:=\nu\circ\tau_n\circ\widetilde\Psi_{n,c}$, then $\beta_{n,c}(0)=\nu(q)=p$ and $F^n\circ\beta_{n,c} =\mu\circ\Psi(\,\cdot\,,c)=J(\,\cdot\,,c)$, as desired.
\[
\begin{tikzcd}[column sep=60pt, row sep=large]
  \Delta \arrow[dr, "\widetilde\Psi_{n,c}"]
    \arrow[ddr, "{\Psi(\,\cdot\,,c)}"', pos=0.7]
    \arrow[drrr, bend left=18, "\beta_{n,c}"]
    \arrow[ddrrr, out=270, in=200, looseness=1.3,
      "{J(\,\cdot\,,c)}", pos=0.6] & & & \\
  & Y_n \arrow[r, "\tau_n"] \arrow[d, "\rho_n"]
  & X \arrow[r, "\nu"] \arrow[d, "g^n"]
  & W \arrow[d, "F^n"] \\
  & Y \arrow[r, "\pi"']
  & X \arrow[r, "\nu"'] & W
\end{tikzcd}
\]
Finally we remark that, for each fixed $t\in\Delta\setminus\{0\}$, the map $c\mapsto\Psi(t,c)$ is biholomorphic onto an open subset of $P$. As $\mu$ has generic rank $s$ on $P$, the map $c\mapsto J(t,c)$ has generic rank $s$. The generic rank cannot be larger than $s$ because $J$ factors through $P$, thus is $s$.
\end{proof}

\begin{proof}[Proof of Theorem~\ref{thm:algebraic-core}]
If $r=N$, then let $m=a$, $W=\bP^N$, $M=(\bP^N,p)$ and we obtain the conclusions immediately. 

Assume now that $r<N$, and let $J$ and $\beta_{n,c}$ be as in
Lemma~\ref{lem:core-inverse-family}. For every $n\geq1$ and $c\in U$, the tuple of Taylor series, i.e.\ the vector-valued series, at $t=0$ of $(x,y)\circ\beta_{n,c}$ belongs to $(t)\bC[[t]]^N$ and solves the equation $F^n\circ\widehat\beta=\widehat J(t,c)$. 
By Lemma~\ref{lem:inverse-arc-rank}, the generic rank $s$ of $J$
is at most $r$. By Lemma~\ref{lem:core-closure}, the variety $W$ is irreducible, $F(W)=W$, and $s\geq r$. Hence $\dim W=s=r$.

The variety $X$ being normal, the completed local ring $\widehat{\cO}_{X,q}$ is normal by \cite{Zariski_normal}. By Lemma~\ref{lem:core-normalization}, the homomorphism $\widehat\kappa^*:\widehat{\cO}_{X,q}\to\bC[[x]]$ is surjective. A non-trivial integral quotient of $\widehat{\cO}_{X,q}$ would have Krull dimension less than $r$. Since $\dim\bC[[x]]=r$, the kernel is zero. Thus $\widehat\kappa^*$ is an isomorphism, so $\widehat{\cO}_{X,q}$ is regular and $X$ is smooth at $q$.

By \eqref{eq:normalized-core-equivariance}, we have $\widehat\kappa^*(x_i\circ\nu)=x_i$ for $1\leq i\leq r$. Therefore $x_1\circ\nu,\ldots,x_r\circ\nu$ form local analytic coordinates. This implies that $\nu$ has injective differential at $q$. In particular, the germ $(X,q)$ is mapped by $\nu$ biholomorphically to a smooth graph germ $M=\{y=\varphi(x)\}$ for some holomorphic germ $\varphi:(\bC^r,0)\to(\bC^{N-r},0)$. As $M\subset(W,p)$ and $\dim M=\dim W=r$, the germ $M$ is an irreducible component of $(W,p)$. The equality $\widehat\kappa^*\circ\widehat\nu^*=\theta$ in \eqref{eq:normalized-core-equivariance} implies that the Taylor series of $\varphi$ is $\widehat\varphi$. As $\widehat\varphi\in(x)^2\bC[[x]]^{N-r}$, we get $T_pM=E$.

In the coordinates $x_1\circ\nu,\ldots,x_r\circ\nu$, the Taylor
series of $g$ at $q$ is $\widehat h$, by the first identity in
\eqref{eq:normalized-core-equivariance}. Since the linear part of $\widehat h$ is invertible, $g$ is locally biholomorphic at $q$. 
As $g(q)=q$ and $\nu\circ g=F\circ\nu$, we obtain an induced automorphism of $M$.
\end{proof}

\begin{cor}\label{cor:core-independence}
The variety $W$ is the Zariski closure of the germ $M$. The germ $M$ and the variety $W$ in Theorem~\ref{thm:algebraic-core} are uniquely determined by $f$ and $p$ and are unchanged when $f$ is replaced by a
positive iterate.
\end{cor}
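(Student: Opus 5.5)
The plan is to prove first that $W$ is the Zariski closure of $M$, and then to deduce uniqueness and invariance under iteration from that characterization.

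\emph{Step 1: $W$ is the Zariski closure of $M$.} When $r=N$ we have $W=\bP^N$ and $M=(\bP^N,p)$, so there is nothing to prove. Assume $r<N$. By the proof of Theorem~\ref{thm:algebraic-core}, $M$ is the graph $\{y=\varphi(x)\}$, and the Taylor series of $\varphi$ is $\widehat\varphi$. Consequently, a polynomial $u\in\bC[x,y]$ vanishes on $M$ if and only if $u(x,\varphi(x))=0$ as a convergent series, which holds if and only if $\theta(u)=0$, i.e.\ $u\in I\cap\bC[x,y]$. So the ideal of polynomials vanishing on the germ $M$ is exactly $I\cap\bC[x,y]$. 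By the definition of $W$, the Zariski closure of $M$ in the affine chart is therefore the zero set of $I\cap\bC[x,y]$. Taking closures in $\bP^N$ gives $W=\overline{M}^{\rm Zar}$.

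\emph{Step 2: intrinsic characterization of $M$.} I will show that $M$ is the unique germ of a smooth complex submanifold through $p$ that is invariant under some iterate $f^{m'}$ fixing $p$ and satisfies $T_pM=E$. The subspace $E$ depends only on $d(f^a)_p$, and it is unchanged when $f^a$ is replaced by a power, since the generalized eigenspaces for nonzero eigenvalues are the same for $f^a$ and $(f^a)^k$.

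Let $M'$ be such a germ, invariant under $f^{m'}$. Replacing both $M$ and $M'$ by invariance under $f^{mm'}$, we may assume both are invariant under a common iterate $f^{k}$, with $k$ a multiple of $m$ and hence of $ab$. Since $T_pM'=E$ is transverse to the $y$-directions, $M'$ is a graph $\{y=\varphi'(x)\}$ with $\varphi'\in(x)^2$. Invariance of $M'$ under $G^{bk'}=f^{k}$, where $k'=k/(ab)$, gives equation~\eqref{eq:series-identities} for the germ $G^{k/a}$, whose power has the same block structure. Applying the uniqueness part of Lemma~\ref{lem:invariant-series} to $G^{k/a}$ would give $\varphi'=\varphi$.

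There is one point to check here. The series $\widehat\varphi$ constructed for $G$ with exponent $b$ must also solve the equation for the larger exponent $bk'$. This holds because invariance under $G^b$ implies invariance under $G^{bk'}$. Uniqueness is then applied with the fixed exponent $bk'$: both $\widehat\varphi$ and the Taylor series of $\varphi'$ solve that single equation, hence they coincide. (Lemma~\ref{lem:invariant-series} allows any $b$ for which $(d_0G)^b$ kills the nilpotent part, so $bk'$ is admissible.) This shows that $M$ is determined by $f$ and $p$, independently of the choices of coordinates, $b$, and $m$.

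\emph{Step 3: conclusion.} By Step 1, $W$ is then determined by $f$ and $p$ as well. If $f$ is replaced by an iterate $f^j$, the point $p$ has some period $a'$ dividing $a$'s multiples, and the nonzero-eigenvalue subspace is still $E$. Theorem~\ref{thm:algebraic-core} applied to $f^j$ yields a germ $M_j$ that is smooth, invariant under some iterate of $f$, and tangent to $E$. By Step 2, $M_j=M$, and by Step 1, $W_j=W$.

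The main obstacle I expect is in Step 2: making sure the uniqueness in Lemma~\ref{lem:invariant-series} applies to an arbitrary invariant graph, including ones that are a priori only convergent and invariant under a different iterate. The common-iterate reduction described above handles this.
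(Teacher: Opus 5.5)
Your proposal is correct. Steps 2 and 3 follow the paper's argument. You pass to a common iterate fixing $p$ that preserves both germs and kills the generalized zero eigenspace. You write both germs as graphs over $E$ with $\varphi_i\in(x)^2$, and you invoke the uniqueness part of Lemma~\ref{lem:invariant-series}. The paper applies it to $f^{m'}$ with $b=1$, which is the same as your $G$ with exponent $bk'$.

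Your check that $\widehat\varphi$ also solves the equation for the larger exponent is valid but unnecessary. $M$ is itself a convergent germ invariant under $f^k$, so its Taylor series solves that equation directly. The paper uses this, treating the two germs $M_1,M_2$ symmetrically.

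The genuine difference is in Step 1. The paper argues by dimension: $W$ is irreducible of dimension $r$ and contains the $r$-dimensional germ $M$, so no proper algebraic subset of $W$ can contain $M$. You instead identify the ideal of polynomials vanishing on $M$ with $I\cap\bC[x,y]$, using that the Taylor series of $\varphi$ is $\widehat\varphi$. This is also correct. It gives an explicit description of the ideal of $W$ without a dimension count at that step.

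Your version is tied to the specific construction of $W$. The paper's dimension count applies to any pair $(W,M)$ with the properties listed in Theorem~\ref{thm:algebraic-core}, so it gives uniqueness of $W$ among all such varieties at once. To get that stronger reading, add one line: any irreducible $r$-dimensional $W'$ whose germ at $p$ contains $M$ contains $\overline{M}^{\mathrm{Zar}}=W$, hence equals $W$.

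Also, the phrase about the period in Step 3 is garbled. The period of $p$ under $f^j$ is $a/\gcd(a,j)$, and the corresponding return differential is a power of $d(f^a)_p$, which is what you need to keep the same space $E$.
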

\begin{proof}
Every proper algebraic subset of $W$ has dimension less than $r$, whereas $M$ has dimension $r$. Thus $W$ is the Zariski closure of $M$. If $r=N$, then $M=(\bP^N,p)$ and $W=\bP^N$, so there is nothing to prove. Assume that $r<N$.

Let $M_1,M_2$ be germs obtained from Theorem~\ref{thm:algebraic-core} at $p$ for positive iterates of $f$. Positive powers of $d(f^a)_p$ have the same sum $E$ of generalized eigenspaces for nonzero eigenvalues, so both germs are tangent to $E$. Let $m'\geq1$ be such that $f^{m'}$ preserves both germs and $d(f^{m'})_p$ vanishes on the generalized eigenspace of $d(f^a)_p$ for zero. In the fixed adapted coordinates, each $M_i$ is a graph $y=\varphi_i(x)$ for a holomorphic germ $\varphi_i$ vanishing to order at least two. By uniqueness in Lemma~\ref{lem:invariant-series}, applied to the germ of $f^{m'}$ with $b=1$, the Taylor series of $\varphi_1$ and $\varphi_2$ coincide.
Thus $M_1=M_2$.
\end{proof}

\subsection{Repelling direction}\label{subsec:repelling-direction}
Astorg~\cite{Astorg2020} and Van Tu Le~\cite{Le2020,Le2023} proved some "superattracting or repelling" dichotomy statements for periodic points of a PCF endomorphism under various additional assumptions. 
Their results inspired the following consequence of Theorem~\ref{thm:algebraic-core}, where the conclusion is weaker than theirs but holds unconditionally. 

\begin{prop}\label{prop:replling-direction}
Let $p\in \bP^N$ be a periodic point of $f$ of minimal period $a\geq 1$. If $p$ is not superattracting, then $d(f^a)_p$ has an
eigenvalue of modulus greater than one.
\end{prop}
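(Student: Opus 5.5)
The plan is to work on the algebraic core given by Theorem~\ref{thm:algebraic-core}. I will show that if every eigenvalue of $d(f^a)_p$ has modulus at most one, then some iterates of $f$ converge to the identity on an open piece of $W_{\rm reg}$. This contradicts Lemma~\ref{lem:no-identity-returns}.

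\textbf{Setup.} Since $p$ is not superattracting, $d(f^a)_p$ is not nilpotent, so $r=\dim E>0$. Theorem~\ref{thm:algebraic-core} gives $m$, the $f^m$-invariant variety $W$ of dimension $r$, its normalization $\nu:X\to W$, the induced map $g$, and a smooth fixed point $q$ of $g$ at which $g$ is locally biholomorphic. Moreover $\nu$ maps the germ $(X,q)$ biholomorphically onto the component $M$ of $(W,p)$, with $T_pM=E$. Hence the eigenvalues of $d_qg$ are exactly the eigenvalues of $d(f^m)_p$ on $E$, namely the $(m/a)$-th powers of the nonzero eigenvalues of $d(f^a)_p$. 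Suppose for contradiction that all of them have modulus $\le 1$.

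\textbf{Step 1: normality forces unimodular, bounded linearization.} Apply Lemma~\ref{lem:fixed-inverses} to $g$ on $X$; its hypotheses hold because $W$ is irreducible, $F(W)=W$, and Lemma~\ref{lem:normalization-boundary} applies to $f^m$. This gives a normal family of inverse branches $b_n$ on a neighborhood $U$ of $q$, with $b_n(q)=q$ and $d_qb_n=(d_qg)^{-n}$. Normality bounds these derivatives uniformly. Since every eigenvalue of $(d_qg)^{-1}$ has modulus $\ge1$, all eigenvalues of $d_qg$ must be unimodular and $(d_qg)^{-1}$ must be power bounded. Hence $d_qg$ is diagonalizable with unimodular eigenvalues, and there are $n_j\to\infty$ with $(d_qg)^{n_j}\to\Id$.

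\textbf{Step 2: a sequence of iterates tends to the identity.} Pass to a subsequence so that $b_{n_j}\to b$ on $U$; then $d_qb=\Id$. Using the gaps $k_j:=n_{j+1}-n_j\to\infty$ and normality again, pass to a further subsequence with $b_{k_j}\to c$.

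\begin{itemize}
\item By the last assertion of Lemma~\ref{lem:fixed-inverses}, all $b_n$ map a fixed small neighborhood $U_0$ of $q$ into $U$.
\item On $U_0$ we have $b_{n_{j+1}}=b_{k_j}\circ b_{n_j}$, because both sides are the inverse germ of $g^{n_{j+1}}$ fixing $q$, and the relation propagates by the identity principle.
\item Letting $j\to\infty$ gives $b=c\circ b$ near $q$. Since $b$ is a local biholomorphism at $q$, this forces $c=\Id$ near $q$.
\item Consequently $b_{k_j}\to\Id$ locally uniformly near $q$, and by the inverse function theorem with uniform estimates, $g^{k_j}\to\Id$ on a small neighborhood of $q$.
\end{itemize}

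\textbf{Step 3: transfer to $W$.} Push the convergence forward by $\nu$, which is biholomorphic from this neighborhood onto an open piece of $M$. The analytic set $\Sing W\cap M$ is proper in the $r$-dimensional germ $M$. So there is a nonempty open $V\subset M\setminus\Sing W\subset W_{\rm reg}$ on which $(f^m)^{k_j}\to\Id_V$ locally uniformly. Apply Lemma~\ref{lem:no-identity-returns} to the PCF map $f^m$ and the positive-dimensional invariant subvariety $S=W$ to get a contradiction.

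\textbf{Main obstacle.} I expect Step 2 to be the delicate point. One must make sure the composition identity $b_{n+k}=b_k\circ b_n$ holds on a neighborhood independent of $n$ and $k$, which is why the uniform-smallness clause of Lemma~\ref{lem:fixed-inverses} is needed. One must also check that $b_{k_j}\to\Id$ upgrades to $g^{k_j}\to\Id$ on an open set. I would try the uniform inverse-function-theorem argument first: the $b_{k_j}$ are injective, close to the identity in $C^1$, and contain a fixed ball in their images for large $j$.
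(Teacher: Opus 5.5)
Your proof is correct and follows the paper's route. Normality of the inverse branches $b_n$ from Lemma~\ref{lem:fixed-inverses} on the normalized algebraic core bounds $(d_qg)^{-n}$, a subsequential limit $b$ is locally biholomorphic at $q$, iterates $g^{k}$ then converge to the identity near $q$, and pushing this forward by $\nu$ contradicts Lemma~\ref{lem:no-identity-returns}. The paper gets there more directly: it uses only $\lvert\det d_qg\rvert=1$ to make $b$ locally biholomorphic, and writes $g^{l_j}=b_{n_j}\circ b_{n_{j+1}}^{-1}\to b\circ b^{-1}$. Your diagonalizability/recurrence argument and the second extraction $b_{k_j}\to c$ are valid but not needed.
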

\begin{proof}
Suppose for contradiction that $p$ is not superattracting and that every eigenvalue of $d(f^a)_p$ has modulus at most one. Let $E$ be the sum of its generalized eigenspaces for nonzero eigenvalues. Let $m,W,M$ be as in Theorem~\ref{thm:algebraic-core}, and let $F=f^m$.
Let $\nu:X\to W$ be the normalization and $g:X\to X$ the map induced by $F$. The germ $M$ corresponds to a smooth fixed point $q$ of $g$ with $\nu(q)=p$, and $g$ is locally biholomorphic at $q$. Since $d\nu_q:T_qX\to E$ is an isomorphism and $\nu\circ g=F\circ\nu$, every eigenvalue of $d_qg$ has modulus at most one.

By Lemma~\ref{lem:fixed-inverses}, the inverse maps $b_n$ fixing $q$ form a normal family on a common neighborhood $U$ of $q$. Therefore the differentials $d_qb_n=(d_qg)^{-n}$ are uniformly bounded, and $\lvert\det d_qg\rvert\geq 1$. As every eigenvalue of $d_qg$ has modulus at most one, we have $\lvert\det d_qg\rvert=1$.

By normality, there is a subsequence $(b_{n_j})$ which converges locally uniformly on $U$ to a holomorphic map $b$. We have $b(q)=q$ and $\lvert\det d_qb\rvert=1$ by passing to the limit, so $b$ is locally biholomorphic at $q$.
Thus, there is a connected neighborhood $V$ of $q$ on which, for all large $j$, the inverses $b_{n_j}^{-1}$ are defined, take values in $U$, and converge locally uniformly to $b^{-1}$. 
Let $l_j=n_{j+1}-n_j\geq 1$, so that $g^{l_j}\circ b_{n_{j+1}}=b_{n_j}$ on $U$. Then we have locally uniform convergence
\begin{equation}\label{eq:core-identity-returns}
g^{l_j}\vert_V
=b_{n_j}\circ b_{n_{j+1}}^{-1}
\longrightarrow b\circ b^{-1}=\Id_V.
\end{equation}
Since $\nu$ is an isomorphism over $W_{\rm reg}$ and $\nu\circ g=F\circ\nu$, we deduce that $F^{l_j}\to\Id$ locally uniformly on the nonempty open subset $\nu(V)\cap W_{\rm reg}$ of $W_{\rm reg}$. This contradicts Lemma~\ref{lem:no-identity-returns}.
\end{proof}

\subsubsection{The one-dimensional case}\label{subsec:one-dimensional-case}

The previous results already suffice to give a short proof of
Theorem~\ref{mainthm:Fatou} for the $N=1$ case, i.e. the classical theorem in \cite{Milnor2006} stating that the Fatou set of a PCF rational function consists entirely of basins of superattracting cycles. Note that in this case, the postcritical set is finite,
and the algebraic core at every non-superattracting periodic
point is the whole projective line.

\begin{cor}\label{cor:pcf-P1}
Let $f:\bP^1\to\bP^1$ be a PCF endomorphism of degree at
least two. Every Fatou component of $f$ is preperiodic to
an immediate basin of a superattracting periodic orbit.
\end{cor}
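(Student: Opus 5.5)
Let $\Omega$ be a Fatou component. By Theorem~\ref{thm:periodic-constant}, $\cH_\Omega$ contains a constant map with value $p$, where $p$ is periodic of minimal period $a$. The plan is to show that $p$ is superattracting, and then conclude with standard one-variable arguments.

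\emph{Step 1: $p$ is superattracting.} Suppose it is not. The remark preceding the corollary says that in dimension one the algebraic core at any non-superattracting periodic point is all of $\bP^1$. Indeed, $E=T_p\bP^1$ has $r=1=N$, and Proposition~\ref{prop:replling-direction} gives $\lvert (f^a)'(p)\rvert>1$, so $p$ is repelling. I will then show that a repelling periodic point cannot be a constant Fatou limit value. Pick a subsequence with $f^{n_j}\vert_\Omega\to p$ locally uniformly. Passing to a further subsequence, we may assume $n_j\equiv n_0\pmod a$. Let $\Omega'$ be the Fatou component containing $f^{n_0}(\Omega)$. Then $f^{a k_j}\to p$ locally uniformly on $f^{n_0}(\Omega)\subset\Omega'$, and by the identity principle for normal limits this convergence holds on all of $\Omega'$, so $p\in\overline{\Omega'}$.

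On a small disk $V$ around $p$, the map $f^a$ is expanding: $\lvert f^a(z)-p\rvert\geq\lambda\lvert z-p\rvert$ with $\lambda>1$. Take $z\in\Omega'$ and a limit $f^{ak_j}(z)\to p$. Since $\lvert f^{ak_j}(w)-p\rvert\to0$ uniformly on a neighborhood of $z$, nearby points stay in $V$ for all subsequent iterates only if they are exactly $p$. Hence the iterates $f^{ak_j+\ell}$ must leave $V$ again before $k_{j+1}$. Rather than chase orbits, I will use the cleaner route: $p$ is a repelling periodic point, so $p\in\cJ(f)$. A constant Fatou limit $p\in\cJ(f)$ together with the local linearization of $f^a$ near $p$ gives $\lvert (f^{ak})'\rvert\to\infty$ along orbits converging to $p$. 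The limit $f^{ak_j}\to p$ is constant on $\Omega'$, so the derivatives tend to $0$ locally. Combining this with the linearizing coordinate $\phi$ gives $\phi\circ f^{ak_j}=\lambda^{k_j}\phi\circ(\text{earlier iterate})$ on the part of the orbit staying near $p$.

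I expect the cleanest argument to be: by Ueda's Lemma~\ref{lem:inverse-normality}, the local inverse branches $b_n$ of $f^{an}$ fixing $p$ on a disk form a normal family. The constant limit forces forward orbits of a small disk $K\subset\Omega'$ to come arbitrarily close to $p$ and remain in the domain $U$ of the $b_n$. Applying $b_{k}$ to $f^{ak}(K)$ recovers a subset of $\Omega'$ that shrinks to $p$, since $b_k\to p$ by $\lvert (b_k)'(p)\rvert=\lambda^{-k}\to0$ together with normality. If the orbit of $K$ lands in $U$ at time $k$, then either $f^{ak}(K)$ is contained in a branch different from $b_k(U)$ for all $k$, or $K=b_k(f^{ak}(K))$ shrinks to $p$, which is absurd. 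Ruling out the first alternative is \emph{the main obstacle}. I would try instead to invoke Proposition~\ref{prop:Fatou-limit-point}: Fatou limit points lie in $E_f=\bigcap f^n(P_f)$, which is a finite set of periodic postcritical points. A repelling postcritical periodic point $p$ of a PCF map has a critical point in its grand orbit that eventually maps onto its cycle. Classically, Fatou limits are attracting, parabolic or rotation-type, by Sullivan's theorem, which is allowed since $N=1$. By Ueda there are no rotation domains; parabolic and attracting non-superattracting cycles have multiplier of modulus $\leq1$ and are excluded by Proposition~\ref{prop:replling-direction}; repelling cycles never attract open sets. This leaves superattracting cycles.

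\emph{Step 2: conclusion.} Since $p$ is superattracting, some $f^{n_j}(\Omega)$ meets the immediate basin of $p$. Hence $f^{n_0}(\Omega)$ lies in a Fatou component containing points near $p$, which is the immediate basin itself, since the immediate basin is a Fatou component. Therefore $\Omega$ is preperiodic to an immediate basin of the superattracting cycle of $p$.
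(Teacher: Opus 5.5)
\textbf{Status.} Your Step~1 is complete only through the final fallback to Sullivan's theorem. The self-contained route you try first has a genuine gap, and it is exactly the point the paper's proof addresses.

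\textbf{The gap in the direct route.} You only have \emph{subsequential} convergence $f^{n_j}\vert_\Omega\to p$. Nothing then prevents the orbit of a small disk from leaving the expanding neighborhood of $p$ between times $n_j$ and $n_{j+1}$. This is your ``main obstacle''. The inverse-branch variant hits the same wall, since identifying a point with $b_k$ of its image needs the orbit to stay in $U$ at all intermediate times.

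\textbf{The missing idea.} Upgrade to convergence along the \emph{full} sequence, using the finiteness of $E=\bigcap_{n\geq0}f^n(P_f)$, which you mention but never exploit.
\begin{enumerate}
\item Pass to an iterate $F=f^m$ fixing every point of $E$; an iterate fixing only $p$ does not suffice.
\item By Proposition~\ref{prop:Fatou-limit-point}, every limit of $F^n\vert_\Omega$ takes values in the finite set $E$, hence is constant.
\item For $z_0\in\Omega$, the distance between $F^{n+1}(z_0)$ and $F^n(z_0)$ tends to $0$: along any subsequence with $F^{n_j}\to e$, both points tend to $e=F(e)$.
\item An orbit with shrinking steps and all accumulation points in a finite set converges to a single $p\in E$. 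Normality then gives $F^n\vert_\Omega\to p$ locally uniformly.
\end{enumerate}
With this, your expansion idea closes at once. If $F'(p)\neq0$, Proposition~\ref{prop:replling-direction} gives $\lvert F(z)\rvert\geq c\lvert z\rvert$ with $c>1$ near $p=0$. For a disk $A\Subset\Omega$, the open set $F^{n_0}(A)$ has its entire forward orbit in $\{\lvert z\rvert<\epsilon\}$, which is impossible for any $y\neq0$ in it.

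\textbf{Two minor slips.} Your claim $p\in\overline{\Omega'}$ is unjustified when $\Omega'$ is strictly preperiodic, though you do not use it. In Step~2, it is $f^{n_j}(\Omega)$ for large $j$, not $f^{n_0}(\Omega)$, that equals the immediate basin.

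\textbf{The Sullivan fallback.} The route you finally commit to is logically sound but genuinely different and much heavier. It uses Sullivan's no-wandering-domains theorem plus the classification of periodic Fatou components. Rotation domains are excluded by Ueda, and attracting or parabolic non-superattracting cycles by Proposition~\ref{prop:replling-direction}. This is essentially the classical proof recalled in the introduction, which the corollary is designed to bypass. Once Sullivan is admitted, Proposition~\ref{prop:replling-direction} is not even needed. A non-superattracting attracting or parabolic cycle attracts a critical point with infinite orbit, and the boundary of a rotation domain lies in the finite set $P_f$. The paper's argument instead uses only Ueda's Proposition~\ref{prop:Fatou-limit-point}, the finiteness of $E$, and the paper's own dichotomy, with no quasiconformal machinery.
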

\begin{proof}
The set $E:=\bigcap_{n\geq0}f^n(P_f)$ is finite and $f$
permutes its points. Choose $m\geq1$ such that $F:=f^m$
fixes every point of $E$. Let $\Omega$ be a Fatou component.
By Proposition~\ref{prop:Fatou-limit-point}, every map in
$\cH_\Omega(F)$ takes values in $E$. Since $\Omega$ is
connected, all these maps are constant.

Fix $z_0\in\Omega$. We denote by  $x_n=F^n(z_0)$ the orbits. By normality,
every accumulation point of $(x_n)$ belongs to $E$.
Since $F$ fixes $E$, the spherical distance between $x_{n+1}$
and $x_n$ tends to zero. Choose pairwise disjoint small
neighborhoods of the points of $E$ with disjoint closures.
It is clear that the sequence eventually stays in one neighborhood,
whose only possible accumulation point is its center
$p\in E$. Hence $x_n\to p$. Every map in
$\cH_\Omega(F)$ is therefore the constant $p$, and
normality gives $F^n\vert_\Omega\longrightarrow p$ locally uniformly.

It only remains to show that $F'(p)=0$. Suppose to the contrary that $F'(p)\ne0$. We apply Proposition~\ref{prop:replling-direction} to $F$ and obtain that $\lvert F'(p)\rvert>1$. In a local
coordinate with $p=0$, choose $c>1$ and $\epsilon>0$ such
that
\[
\lvert F(z)\rvert\geq c\lvert z\rvert
\qquad (\lvert z\rvert<\epsilon).
\]
Choose a nonempty disk $A\Subset\Omega$. By the convergence
just proved, there is $n_0$ such that
$F^{n_0+k}(A)\subset\{\lvert z\rvert<\epsilon\}$ for every
$k\geq0$. The set $F^{n_0}(A)$ is open, so it contains a
point $y\ne0$. Its forward orbit then satisfies
$\epsilon>\lvert F^k(y)\rvert\geq c^k\lvert y\rvert$ for
every $k$, a contradiction. Thus $F'(p)=0$. If $a$ is the
minimal period of $p$ under $f$, then $a\mid m$ and
$F'(p)=((f^a)'(p))^{m/a}$. Thus this periodic orbit is
superattracting. 

Note that an immediate attracting basin is a Fatou component.
On the other hand, $f$ maps every Fatou component onto a Fatou
component. Let $z\in\Omega$ and let $a\geq0$ be such that $f^a(z)$
belongs to the immediate basin of $p$.
Then $f^a(\Omega)$ is a Fatou component meeting this
immediate basin, and hence equals it.
Consequently $\Omega$ is preperiodic to an immediate
basin of the superattracting cycle.
\end{proof}

\section{Transverse superattraction}\label{sec:transverse}

\subsection{Generic transverse behavior}\label{subsec:generic-behavior}

As before, $f$ is a PCF endomorphism. Let $p$ be a
non-superattracting periodic point. Theorem~\ref{thm:algebraic-core}
provides a periodic irreducible projective subvariety $W$ and a
smooth irreducible component $M$ of the analytic germ $(W,p)$.
We call $M$ the \emph{smooth core germ} of $p$, and $W$ its
\emph{algebraic core}. By Corollary~\ref{cor:core-independence},
$W$ is the Zariski closure of $M$. We denote $r=\dim W=\dim M$.
The first goal of this section is to prove the following theorem.

\begin{thm}\label{thm:transverse}
There is an integer $m\geq 1$ such that $f^m(p)=p$ and
$f^m(W)=W$, and, for every sufficiently large $n$, the
differential $d(f^{mn})_x:T_x\bP^N\to T_{f^{mn}(x)}\bP^N$
has rank $r$ for general $x\in W$.
\end{thm}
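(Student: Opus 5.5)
Take $m$ to be the integer from Theorem~\ref{thm:algebraic-core}, so that $F:=f^m$ fixes $p$, preserves $W$, and induces an automorphism of the smooth core germ $M$ with $T_pM=E$. Let $x$ be a general point of $W$, in particular a smooth point at which $F^n$ restricted to $W$ is étale onto a smooth point; this exists because $F|_W$ is finite surjective. At such a point the rank of $d(F^n)_x$ is at least $r$, since $d(F^n)_x$ maps $T_xW$ isomorphically onto $T_{F^n(x)}W$. So the statement amounts to showing that the induced \emph{conormal differential}
\[
N^\vee_{W,F^n(x)}\to N^\vee_{W,x},
\]
equivalently the map $T_x\bP^N/T_xW\to T_{F^n(x)}\bP^N/T_{F^n(x)}W$, vanishes identically for $x$ in a dense Zariski open subset of $W$ once $n$ is large. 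The locus where a given $d(F^n)$ has rank $\le r$ along $W$ is Zariski closed in $W$. Since $W$ is irreducible, it therefore suffices to show that this locus contains a nonempty analytic open subset of $W_{\rm reg}$, or even contains the germ $M$.

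First I would check this on $M$ at the formal level. By \eqref{eq:core-contraction}, $\widehat{(G^{bk})^*}I\subset\fm^kI$, and $I$ is the ideal of $M$. The key consequence is that pulling back a defining function $e$ of $M$ by $F^k$ yields $\sum a_j e_j$ with all $a_j\in\fm^{k}$. In particular $a_j(0)=0$, so the conormal map of $F^k$ vanishes at $p$. This only controls the single point $p$, however.

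To spread this along $M$, I would rewrite the same identity at a point $x\in M$ near $p$. We have $e_i\circ F=\sum_j a_{ij}e_j$ with $a_{ij}$ now convergent, because $M$ is an analytic graph. The conormal matrix of $F$ along $M$ at $x$ is $(a_{ij}(x))$, and the conormal matrix of $F^n$ at $x$ is the cocycle product $A(x)A(F(x))\cdots A(F^{n-1}(x))$. On $M$ the matrix $A(p)$ is nilpotent, since $F$ acts transversally through the zero-eigenvalue block, but $A(x)$ need not be nilpotent at other points. Following the ``specialization argument for formal inverse arcs'' announced in the introduction, I would instead exploit backward orbits. Pick a point $x\in M$ and use the inverse branches $b_n$ of Lemma~\ref{lem:fixed-inverses} on the normalization, which all fix $q$ and stay in a fixed small neighborhood. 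Then
\[
\operatorname{conormal}(F^n)_{b_n(x)}=\prod_{j} A\bigl(b_{n-j}(x)\bigr).
\]
Because the points $b_{n-j}(x)$ stay in a small neighborhood of $p$ where $\|A\|$ is close to the nilpotent $A(p)$, this product is small. Choosing the neighborhood so that a fixed power $A(p)^N=0$ gives exponential decay of the product. That alone does not give exact vanishing.

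To get exact vanishing, I would use rank semicontinuity together with the algebraic nature of the rank-$\le r$ locus $Z_n\subset W$ of $d(F^n)$. The sets $Z_n$ satisfy $F^{-1}(Z_n)\cap W\subset Z_{n+1}$. The chain of closed sets $Z_n$ increases, and its union is the locus where some iterate kills the conormal directions. The obstacle, which I expect to be the main difficulty, is to show that $Z_n=W$ for some $n$. For this I would argue on generic formal arcs, as in the proof of Lemma~\ref{lem:core-inverse-family}. Take the generic formal arc $\gamma_0$ through $q$, which avoids $B$, together with its lifts $\gamma_n$ satisfying $g^n\circ\gamma_n=\gamma_0$. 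The conormal cocycle evaluated along $\gamma_n$ and pulled back to $\bC[[t]]$ lands in $(t)^{n}$ by \eqref{eq:core-contraction}, since $\gamma_n$ factors through $\kappa$ and $I$-contraction is uniform. The determinantal minors of size $r+1$ of $d(F^n)$, restricted to $W$ and evaluated at the generic point of the arc, then vanish to order tending to infinity. Because the arc is generic, i.e.\ dominant onto $X$, and these minors come from a finite-dimensional family of algebraic functions of bounded degree growth, I would try to conclude via a Noetherian/stabilization argument that they vanish identically for large $n$. Making this last finiteness step rigorous, namely turning ``order tends to infinity along a dominant arc'' into ``identically zero'', is where I expect the real work lies.
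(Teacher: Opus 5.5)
Your reduction is sound: $d(F^n)$ has rank at least $r$ at general points of $W$, so you need the eventual generic rank $\varrho$ of $d(F^n)$ along $W$ to equal $r$. Your backward-orbit contraction estimate (with $A(p)=0$ for the iterate from Theorem~\ref{thm:algebraic-core}) is also an ingredient the paper uses. The gap is the step meant to give exact vanishing, and it cannot be repaired as proposed. Your estimate along the arcs $\gamma_n$ concerns a different function (a minor of $d(F^n)$) along a different arc for each $n$. No single function has order tending to infinity along a single arc, so nothing forces vanishing.

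Worse, the arcs $\gamma_n$ come from $\widehat\kappa^*$, which is an isomorphism onto $\bC[[x]]$, so they all lie in the formal germ $M$. Along $M$ the map $F$ is a local automorphism, so formal lifts exist for free and carry no postcritical information. The local model $F(x,y)=(2x,xy)$, $M=\{y=0\}$, shows that no argument using only this data can work:
\begin{itemize}
\item $A(p)=0$, and backward orbits contract;
\item the conormal factor of $F^n$ along $\gamma_n(t)=(2^{-n}t,0)$ is $2^{-n(n+1)/2}t^n$, of order exactly $n$;
\item yet $d(F^n)$ has rank $2>r=1$ at every point of $M\setminus\{p\}$.
\end{itemize}
A Noetherian argument cannot rescue this either. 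The rank-$\leq r$ loci form an \emph{increasing} chain of closed sets, which need not stabilize, and the degrees of the minors grow exponentially in $n$.

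The missing idea is to probe the directions in which the generic differential is \emph{not} degenerate, and to inject postcritical finiteness through holomorphic inverse lifts of arcs sweeping those directions.
\begin{enumerate}
\item The paper works at a moving generic center of $W$, over the field $L=\bigcup_n\sigma^{-n}\bC(W)$, on which $\sigma$ is an automorphism.
\item There the twisted linear map $\overline\Phi^n$ has rank equal to the generic rank of $d(F^n)$. Some power of $\overline\Phi$ is invertible on a $\varrho$-dimensional summand and zero on a complement.
\item One solves for an invariant formal graph tangent to that summand (Lemma~\ref{lem:twisted-algebraic-image}). Its Zariski closure $Z\supset W$ is $F$-invariant with $\dim Z\geq\varrho$.
\item Specializing the coefficients at a Baire-generic point $x\in M$ near $p$, and using Lemma~\ref{lem:formal-inverse-family} (arc transfer on a log resolution, which is where postcritical finiteness enters), produces a holomorphic family $J$ of generic rank $\dim Z$. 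It admits holomorphic lifts through every $F^n$ centered at $h^{-n}(x)$.
\item Only now does your contraction close the argument. By Ueda's normality the intermediate lifts stay near $p$, so $\lVert e\circ J\rVert\leq c_0^n\sup\lVert e\rVert$ for all $n$.
\end{enumerate}
Hence $J$ lies in $M$, $\dim Z\leq r$, and $r\leq\varrho\leq\dim Z\leq r$.
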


\begin{rem}
We will adapt the formal graph construction of
Lemma~\ref{lem:invariant-series} to a general point of $W$. The difference is that the base point under consideration needs not be fixed. The homomorphism $\Phi$ we will introduce below is a formal way to express the differential cocycle as a substitution on formal power series. Its induced action $\overline\Phi$ defined below is simply the usual cotangent pullback. We introduce these objects in the following algebraic way to make easier later arguments. 
\end{rem}

Fix an iterate $F=f^a$ satisfying $F(p)=p$, $F(W)=W$, and $F(M)=M$, such that the germ $h:=F\vert_M$ is locally biholomorphic at $p$. Such an iterate exists by Theorem~\ref{thm:algebraic-core}. Let $\varrho$ be the minimum, over $n\geq1$, of the generic rank along $W$ of the differential $d(F^n)$. 

\medskip
\noindent\emph{The coefficient field.}
The objects that we are going to define are for the following purpose: we want to move simultaneously a point $x\in W$ and a nearby point $z\in\bP^N$. Consider the commutative diagram:
\[
\begin{tikzcd}[column sep=large]
W\times\bP^N \arrow[r,"F\vert_W\times F"]
  \arrow[d,"\mathrm{pr}_1"']
& W\times\bP^N \arrow[d,"\mathrm{pr}_1"] \\
W \arrow[r,"F\vert_W"'] & W.
\end{tikzcd}
\]
On the generic fiber, the pullback acts on
$K:=\bC(W)$ by $F^*:b\mapsto b\circ F$. Note that this embedding of
fields need not be surjective. However, we will enlarge $K$ so that the coefficient action becomes an automorphism. We denote $\mathcal M$ the field of meromorphic germs on $M$ at $p$. As $M$ is Zariski-dense in $W$, we can think of $K$ as a subfield of $\mathcal M$. Define
\begin{equation}\label{eq:sigma-definition}
\sigma(b):=b\circ h,\qquad
\sigma^{-1}(b):=b\circ h^{-1},\qquad
L:=\bigcup_{n\geq0}\sigma^{-n}K.
\end{equation}
The first two maps are inverse automorphisms of
$\mathcal M$. Since $F(W)=W$, we have $\sigma(K)\subset K$,
and consequently
\begin{equation}\label{eq:K-nested}
K\subset\sigma^{-1}K\subset\sigma^{-2}K\subset\cdots.
\end{equation}
Thus $L$ is a field and $\sigma(L)=L$. Moreover, as $F^n\vert_W$
is finite and surjective, the extension $K/\sigma^n(K)$ is finite.
This then implies that $\sigma^{-n}K/K$ is finite. Hence $L/K$ is algebraic. 

\medskip
\noindent\emph{Pullback with a moving center.}
Choose affine coordinates $z_1,\ldots,z_N$ of $\bP^N$ at $p$, and let
$\xi_i\in K$ be the restriction of $z_i$ to $W$. We will write
$\xi=(\xi_1,\ldots,\xi_N)$ and $F_i=z_i\circ F$. Let $w=(w_1,\ldots,w_N)$ be independent variables which will be used as variables of power series; they are supposed to represent displacement from the center $\xi$. We equip $L[[w]]$ with its maximal-ideal topology.

Since $F(p)=p$, we can write $F_i=N_i/D_i$ with $N_i,D_i\in\bC[z_1,\ldots,z_N]$ and $D_i(p)\neq 0$. 
Then $D_i(\xi)$ as an element of $K$ is nonzero, so $D_i(\xi+w)$ is a unit in $L[[w]]$. Therefore $F_i(\xi+w)$ is a well-defined formal
series in $L[[w]]$ and
\begin{equation}\label{eq:difference_F_i}
F_i(\xi+w)-F_i(\xi)\in(w).
\end{equation}
For a multi-index $\alpha\in\bZ_{\geq0}^N$, we write $w^\alpha=\prod_iw_i^{\alpha_i}$ and $\lvert\alpha\rvert=\sum_i\alpha_i$. We define
\begin{equation}\label{eq:twisted-Phi}
\Phi\left(\sum_\alpha b_\alpha w^\alpha\right)
:=\sum_\alpha\sigma(b_\alpha)
\prod_{i=1}^N\bigl(F_i(\xi+w)-F_i(\xi)\bigr)^{\alpha_i},
\quad b_\alpha\in L,
\end{equation}
where each $\xi_i$ is viewed as element of $L$. 
By \eqref{eq:difference_F_i}, only finitely many terms contribute modulo $(w)^k$, for each $k$. Thus $\Phi$ is well defined. We observe that $\Phi$ is a continuous endomorphism of the $\bC$-algebra $L[[w]]$, with $\Phi((w)^k)\subset(w)^k$ and $\Phi\vert_L=\sigma$. 

\begin{rem}
The geometric intepretation of \eqref{eq:twisted-Phi} is as follows. For $x\in W$, we write a nearby point in $\bP^N$ as $x+w$, where $w$ represents a displacement. The pair transforms as $(x,w)\mapsto \bigl(F(x),F(x+w)-F(x)\bigr)$, i.e.\ it approximates the tangent map. Pulling back a formal function $\sum_\alpha b_\alpha(x)w^\alpha$ therefore composes its coefficients with $h$ and substitutes the new displacement for $w$.
\end{rem}

For $P\in(w)$, let $[P]$ be its class in $(w)/(w)^2$. Define $\overline\Phi:(w)/(w)^2\to(w)/(w)^2$ by
\begin{equation}\label{eq:twisted-linear}
\overline\Phi\left(\sum_{i=1}^N a_i[w_i]\right)
=\sum_{i,j=1}^N\sigma(a_i)
\frac{\partial F_i}{\partial z_j}(\xi)[w_j],
\qquad a_i\in L.
\end{equation}
Note that $\overline \Phi$ is the induced action of $\Phi$. 
For $n\geq 1$, we will say that the rank of $\overline\Phi^n$ is
$\dim_L(\operatorname{im}\overline\Phi^n)$. This is well defined because the image $\operatorname{im}\overline\Phi^n$ is always a $L$-linear subspace even though $\overline \Phi$ is not $L$-linear.

\begin{lem}\label{lem:twisted-generic-rank}
Assume that $r<N$. For every $n\geq1$, the rank of
$\overline\Phi^n$ equals the rank of $d(F^n)_x$ for
general $x\in W$. Let $\varrho$ be the minimum of the rank of $\overline\Phi^n$ for all $n$. Then some positive
iterate of $\overline\Phi$ maps a $\varrho$-dimensional
$L$-vector subspace of $(w)/(w)^2$ bijectively onto itself
and vanishes on a complementary $L$-vector subspace.
Moreover, the field $L$ and the integer $\varrho$ are unchanged
when $F$ is replaced by a positive iterate.
\end{lem}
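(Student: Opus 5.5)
The plan is to make $\overline\Phi^n$ explicit as a $\sigma^n$-semilinear map whose matrix is the Jacobian of $F^n$ evaluated at the generic point $\xi$ of $W$. The three assertions then reduce to linear algebra over $L$: a rank comparison, a Fitting-type decomposition for semilinear maps, and a cofinality remark.

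\emph{Step 1: the matrix of $\overline\Phi^n$.} Let $\mathrm{Jac}_n(\xi)$ be the Jacobian matrix of $F^n$ with entries $\frac{\partial(z_i\circ F^n)}{\partial z_j}(\xi)\in K$. I will show by induction on $n$ that
\[
\overline\Phi^n\Bigl(\sum_i a_i[w_i]\Bigr)=\sum_{i,j}\sigma^n(a_i)\,\frac{\partial (z_i\circ F^n)}{\partial z_j}(\xi)\,[w_j].
\]
Two facts feed the induction. First, $\sigma$ acts on coordinates by $\sigma(\xi_i)=\xi_i\circ h=F_i(\xi)$, since $h=F\vert_M$ and $M$ is Zariski dense in $W$. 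Consequently, applying $\sigma^k$ to a rational expression in $\xi$ evaluates it at $F^k(\xi)$. Second, the chain rule gives $\mathrm{Jac}_{n+1}(\xi)=\mathrm{Jac}_n(F(\xi))\cdot\mathrm{Jac}_1(\xi)$. Combining these, the induction step is a direct substitution.

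\emph{Step 2: rank comparison.} Since $\sigma^n$ is an automorphism of $L$, the set $\{\sigma^n(a)\}$ runs over all of $L^N$. Hence $\operatorname{im}\overline\Phi^n$ is the $L$-span of the rows of $\mathrm{Jac}_n(\xi)$, and its dimension is the rank of this matrix over $L$. The entries lie in $K=\bC(W)$, and rank does not change under the field extension $L/K$. The rank over $K$ of a matrix of rational functions on $W$ equals its rank at a general point of $W$. This rank is exactly $\operatorname{rank} d(F^n)_x$ for general $x\in W$, which proves the first claim.

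\emph{Step 3: Fitting decomposition.} Put $V=(w)/(w)^2\cong L^N$. The images $V_n=\operatorname{im}\overline\Phi^n$ form a decreasing chain of $L$-subspaces. The kernels $\ker\overline\Phi^n$ form an increasing chain of $L$-subspaces; they are subspaces because $\overline\Phi$ is additive and $\sigma$-semilinear. Both chains stabilize, and I choose $k$ beyond both stabilization points, so $\dim_L V_k=\varrho$.

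On $V_k$, the map $\overline\Phi$ is a $\sigma$-semilinear surjection onto $V_{k+1}=V_k$. Rank–nullity holds for semilinear maps: composing with the $\sigma^{-1}$-twist makes the map $L$-linear without changing images or kernels. So this surjection is bijective, and hence $\overline\Phi^k$ maps $V_k$ bijectively onto itself. This bijectivity gives $V_k\cap\ker\overline\Phi^k=0$, and rank–nullity gives $\dim V_k+\dim\ker\overline\Phi^k=N$. Therefore $V=V_k\oplus\ker\overline\Phi^k$, and $\overline\Phi^k$ vanishes on the complement $\ker\overline\Phi^k$.

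\emph{Step 4: invariance under iterates.} Replace $F$ by $F^c$. Then $h$ becomes $h^c$, $\sigma$ becomes $\sigma^c$, and $K$ is unchanged. The field $L$ is unchanged because the chain \eqref{eq:K-nested} is increasing, so the subchain $\sigma^{-cn}K$ is cofinal and has the same union. For $\varrho$, the ranks $\operatorname{rank}\overline\Phi^n$ are non-increasing in $n$, since $V_{n+1}=\overline\Phi(V_n)$ is the image of $V_n$ under a semilinear map. Hence the minimum over all $n$ equals the minimum over multiples of $c$. Finally, by Step 1, $\overline\Phi$ for $F^c$ coincides with $\overline\Phi^c$ for $F$.

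The step needing the most care is Step 1: justifying $\sigma(\xi)=F(\xi)$ inside $\mathcal M$, which uses that $M$ is Zariski dense in $W$ and $F(M)=M$, and bookkeeping the semilinear composition correctly. Everything after that is routine linear algebra.
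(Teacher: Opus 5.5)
Your proposal is correct and follows essentially the same route as the paper: the explicit $\sigma^n$-semilinear Jacobian formula for $\overline\Phi^n$, invariance of rank under the extension $L/K$, a Fitting-type splitting $(w)/(w)^2=\operatorname{im}\overline\Phi^k\oplus\ker\overline\Phi^k$, and cofinality of the chain $\sigma^{-cn}K$. The differences are cosmetic. You obtain the formula for $\overline\Phi^n$ by the chain rule directly, rather than by taking linear parts of $\Phi^n(w)=F^n(\xi+w)-F^n(\xi)$. You get bijectivity on $\operatorname{im}\overline\Phi^k$ from rank--nullity for semilinear maps instead of the paper's kernel-stabilization argument; this is valid, but the ``$\sigma^{-1}$-twist'' should be read as twisting the $L$-structure on the target, since $\operatorname{im}\overline\Phi^k$ need not be stable under coordinatewise $\sigma^{-1}$.
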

\begin{proof}
Since $\sigma(\xi)=F(\xi)$, we have
$\Phi(\xi)=F(\xi)$ and $\Phi(\xi+w)=F(\xi+w)$.
The homomorphism $\Phi$ fixes $\bC$ and preserves inverses
of units, so it commutes with substitution into the
rational coordinate expressions for $F$. By induction one has
\begin{equation}\label{eq:twisted-iterates}
 \Phi^n(w)=F^n(\xi+w)-F^n(\xi).
 \end{equation}
Taking degree-one terms, we obtain
\begin{equation}\label{eq:twisted-linear-iterates}
\overline\Phi^n\left(\sum_{i=1}^N a_i[w_i]\right)
=\sum_{i,j=1}^N\sigma^n(a_i)
\frac{\partial(z_i\circ F^n)}{\partial z_j}(\xi)[w_j].
\end{equation}
Thus $\overline\Phi^n$ first applies $\sigma^n$ to
coefficients and then applies the $L$-linear map whose
matrix is the transpose of the Jacobian of $F^n$ at
$\xi$. Since $\sigma^n$ is bijective, the image is the
$L$-span of the rows of this Jacobian, and
\[
\dim_L\ker\overline\Phi^n+
\dim_L\operatorname{im}\overline\Phi^n=N.
\]
The Jacobian of $F^n$ at $\xi$ has entries in $K$. Thus extending scalars to
$L$ does not change its rank, which is exactly the generic
rank of $d(F^n)$ along $W$. It is at least $r$, since
the finite surjective morphism $F^n\vert_W$ is
generically \'etale.

Note that the subspaces $\operatorname{im}\overline\Phi^n$ decrease
with $n$, and the subspaces $\ker\overline\Phi^n$ increase.
Since $(w)/(w)^2$ has finite dimension, both sequences stabilize. Choose $d\geq1$ such that both are
constant for $n\geq d$. Then
$\dim_L\operatorname{im}\overline\Phi^d=\varrho$.
If $v=\overline\Phi^d(u)$ also belongs to
$\ker\overline\Phi^d$, then
$u\in\ker\overline\Phi^{2d}=\ker\overline\Phi^d$,
so $v=0$. The dimension formula therefore gives a decomposition
\begin{equation}\label{eq:twisted-splitting}
(w)/(w)^2=
\operatorname{im}\overline\Phi^d\oplus
\ker\overline\Phi^d.
\end{equation}
The map $\overline\Phi^d$ maps $\operatorname{im}\overline\Phi^d$ onto
$\operatorname{im}\overline\Phi^{2d}
=\operatorname{im}\overline\Phi^d$ and is injective
there by \eqref{eq:twisted-splitting}. It vanishes on
the second summand $\ker\overline\Phi^d$.

If $F$ is replaced by $F^b$, its generic ranks form a
subsequence of the original sequence, so their eventual
value is again $\varrho$. The coefficient action becomes
$\sigma^b$. By \eqref{eq:K-nested},
\[
\bigcup_{n\geq0}\sigma^{-bn}K
=\bigcup_{n\geq0}\sigma^{-n}K=L.
\]
This proves the last assertion.
\end{proof}

\medskip
\noindent\emph{The invariant formal graph.}
Let $(u,v)$ be linear coordinates adapted to two subspaces in \eqref{eq:twisted-splitting}, with $\varrho$ variables in $u$ and
$N-\varrho$ variables in $v$. Set
\[
P(u,v):=\Phi(u),\qquad Q(u,v):=\Phi(v).
\]
For a tuple $\psi$ of series over $L$, write
$\sigma(\psi)$ for the tuple obtained by applying
$\sigma$ to every coefficient, keeping the variables
fixed. We seek
$\varphi\in(u)^2L[[u]]^{N-\varrho}$ satisfying
\begin{equation}\label{eq:twisted-graph}
Q(u,\varphi(u))=
\sigma(\varphi)\bigl(P(u,\varphi(u))\bigr).
\end{equation}
The coefficient twist on the right expresses the fact
that the image graph is centered at $F(\xi)$.

For each such tuple $\varphi$, let
$\theta:L[[w]]=L[[u,v]]\to L[[u]]$ be the
$L$-algebra homomorphism given by $v\mapsto\varphi(u)$
and $u\mapsto u$. Let $H:L[[u]]\to L[[u]]$ be the
continuous $\bC$-algebra homomorphism acting by
$\sigma$ on coefficients and satisfying
$H(u)=P(u,\varphi(u))$.

Define $\kappa:\Spec L[[u]]\to\bP^N$ by the
homomorphism on the chosen affine chart
\[
\bC[z_1,\ldots,z_N]\longrightarrow L[[u]],
\qquad R\longmapsto\theta(R(\xi+w)).
\]
Let $Z$ be the Zariski closure of its image in $\bP^N$.

\begin{lem}\label{lem:twisted-algebraic-image}
Assume that $r<N$ and that
$P(u,v)\equiv Au$ and $Q(u,v)\equiv0$ modulo $(u,v)^2$,
where $A\in\operatorname{GL}_{\varrho}(L)$.
Then equation~\eqref{eq:twisted-graph} has a unique
solution. For this solution, $H$ is an automorphism and
$\theta\circ\Phi=H\circ\theta$.
The variety $Z$ is irreducible, contains $W$, satisfies
$F(Z)=Z$, and has dimension at least $\varrho$.
The morphism $\kappa$ is dominant onto $Z$ and satisfies
$F\circ\kappa=\kappa\circ\Spec H$.
\end{lem}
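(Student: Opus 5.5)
The plan is to mirror Lemma~\ref{lem:invariant-series} and Lemma~\ref{lem:core-closure}, keeping track of the coefficient twist $\sigma$. Throughout I use that $\sigma$ is an automorphism of $L$. This holds by construction, since $\sigma(L)=L$ and $\sigma$ is injective.

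\emph{Step 1: existence and uniqueness of $\varphi$.} Write $\varphi=\sum_{m\ge2}\varphi_m$ with $\varphi_m$ homogeneous of degree $m$ in $u$ with coefficients in $L$, and proceed by induction on the degree as in Lemma~\ref{lem:invariant-series}. Since $P-Au$ and $Q$ lie in $(u,v)^2$, the Taylor-expansion estimates \eqref{eq:series-substitution-order}--\eqref{eq:series-composition-degree} go through verbatim. In those estimates one only uses orders of vanishing, and the coefficient twist does not change orders. The degree-$m$ part of \eqref{eq:twisted-graph} then becomes
\[
\sigma(\varphi_m)(Au)=\Bigl[Q(u,\varphi_{<m}(u))-\sigma(\varphi_{<m})\bigl(P(u,\varphi_{<m}(u))\bigr)\Bigr]_m .
\]
The right side is already determined by $\varphi_{<m}$. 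Substituting $u\mapsto A^{-1}u$ with $A\in\operatorname{GL}_\varrho(L)$ determines $\sigma(\varphi_m)$ uniquely, and applying $\sigma^{-1}$ coefficientwise determines $\varphi_m$ uniquely. This gives existence and uniqueness.

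\emph{Step 2: $H$ is an automorphism.} The map $H$ is a continuous $\sigma$-semilinear endomorphism of $L[[u]]$. Its linear part is $u\mapsto Au$, so it is bijective on $(u)/(u)^2$. Write $H=H_0\circ\widetilde\sigma$, where $\widetilde\sigma$ applies $\sigma$ to the coefficients and $H_0$ is the $L$-linear substitution $u\mapsto\widetilde\sigma^{-1}(P(u,\varphi))$. The substitution $H_0$ has invertible linear part, so it is invertible by the formal inverse function theorem, and $\widetilde\sigma$ is invertible. Hence $H$ is an automorphism.

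\emph{Step 3: the intertwining $\theta\circ\Phi=H\circ\theta$.} Both sides are continuous ring homomorphisms that act as $\sigma$ on $L$, so it suffices to compare them on the generators $u$ and $v$. On $u$ both sides give $P(u,\varphi)$. On $v$, the left side gives $Q(u,\varphi)$ and the right side gives $H(\varphi(u))=\sigma(\varphi)(H(u))=\sigma(\varphi)(P(u,\varphi))$. These agree by \eqref{eq:twisted-graph}.

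\emph{Step 4: geometric consequences for $Z$ and $\kappa$.} I would take these in the following order.
\begin{enumerate}
\item Irreducibility. The ring $L[[u]]$ is a domain, so $\ker\kappa^*$ is prime and $Z$ is irreducible. The morphism $\kappa$ is dominant onto $Z$ by the definition of $Z$.
\item $W\subset Z$. Reducing $\kappa^*$ modulo $(u)$ gives $R\mapsto R(\xi)\in L$. Since $L/K$ is a field extension and $\xi$ is the generic point of $W$, we have $R(\xi)=0$ exactly when $R$ vanishes on $W$. Thus $\ker\kappa^*\subset I(W)$, which gives $W\subset Z$.
\item Equivariance $F\circ\kappa=\kappa\circ\Spec H$. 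As in the proof of \eqref{eq:twisted-iterates}, $\Phi$ fixes $\bC$ and commutes with rational substitution, and $\Phi(\xi+w)=F(\xi+w)$. Hence $\Phi(R(\xi+w))=(R\circ F)(\xi+w)$ for rational $R$ regular at $p$. Applying $\theta$ and using Step 3 gives $\kappa^*(R\circ F)=H(\kappa^*R)$. This is the claimed identity, working on local rings at $p$ to handle denominators as in Lemma~\ref{lem:core-closure}.
\item $F(Z)=Z$. Since $\Spec H$ is an automorphism, $\kappa$ and $F\circ\kappa$ have the same image. Taking Zariski closures gives $F(Z)=Z$.
\end{enumerate}

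\emph{Step 5: $\dim Z\ge\varrho$.} This is the step I expect to be the main obstacle. I would argue with derivations. The elements $g_i:=\kappa^*(z_i)=\theta(\xi_i+w_i)$ have linear parts in $u$ equal to the $u$-components of $w_i$. Because $(u,v)$ is an invertible linear change of $w$, the Jacobian $(\partial g_i/\partial u_j)$ at $u=0$ has rank $\varrho$ over $L$. Hence this Jacobian has rank at least $\varrho$ over $\operatorname{Frac}L[[u]]$. Now suppose $\operatorname{trdeg}_\bC\bC(Z)<\varrho$. Then all $g_i$ are algebraic over $\bC(g_{i_1},\dots,g_{i_s})$ for some $s<\varrho$. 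The $\bC$-linear derivations $\partial/\partial u_j$ of $\operatorname{Frac}L[[u]]$ then satisfy the chain rule relations coming from the minimal polynomials, which are separable in characteristic zero. This would force the Jacobian to have rank at most $s$, a contradiction. Hence $\dim Z\ge\varrho$.
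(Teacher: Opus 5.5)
Your proposal is correct. Steps 1--4 reproduce the paper's argument: the recursion $\varphi_m=\sigma^{-1}\bigl(R_m(A^{-1}u)\bigr)$, the check of $\theta\circ\Phi=H\circ\theta$ on $L$, $u$, $v$, primeness of $\ker\kappa^*$, the constant-term argument for $W\subset Z$, and equivariance from $\Phi(\xi+w)=F(\xi+w)$. The only real divergence is Step 5.

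\emph{Step 5.} The paper picks $\varrho$ of the series $\theta(\xi_i+w_i)$ whose linear parts in $u$ are $L$-independent. After subtracting the constants $\xi_i\in L$, these are formal coordinates on $L[[u]]$ by the formal inverse function theorem, hence algebraically independent over $L$ and so over $\bC$. You instead run the Jacobian criterion with the $\bC$-derivations $\partial/\partial u_j$ of $\operatorname{Frac}L[[u]]$. Separability in characteristic zero makes every row of $(\partial g_i/\partial u_j)$ a combination of the $s=\dim Z$ rows of a transcendence basis chosen among the $g_i$. This choice is legitimate, since $\kappa^*$ identifies $\bC(Z)$ with $\bC(g_1,\dots,g_N)$. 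On the other hand, some $\varrho\times\varrho$ minor is a unit of $L[[u]]$ because its constant term is nonzero.

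This is valid. It replaces the choice of coordinates and the inverse function theorem with the standard fact that Jacobian rank bounds transcendence degree, at the cost of a few more lines. Both arguments rest on the same input: the $u$-Jacobian of $\theta(\xi+w)$ at $0$ has rank $\varrho$. So this step is not the obstacle you feared.

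\emph{Two harmless slips.}
\begin{itemize}
\item In Step 2, with $H_0\colon u\mapsto\widetilde\sigma^{-1}(P(u,\varphi))$ one has $H=\widetilde\sigma\circ H_0$, not $H_0\circ\widetilde\sigma$. The paper composes on the right instead: $H\circ\widetilde\sigma^{-1}$ is the $L$-algebra substitution $u\mapsto P(u,\varphi)$. Either way $H$ is a composite of two automorphisms.
\item In Step 4(c), unlike Lemma~\ref{lem:core-closure}, the closed point of $\Spec L[[u]]$ maps to the generic point of $W$, not to $p$. What handles the denominators is that $\kappa^*(D_i)=\theta(D_i(\xi+w))$ has constant term $D_i(\xi)\neq0$, because $D_i(p)\neq0$ and $p\in W$. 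It is therefore a unit, and $F\circ\kappa$ lands in the chart.
\end{itemize}
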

\begin{proof}
Write $\varphi=\sum_{l\geq2}\varphi_l$, where each
$\varphi_l$ is homogeneous of degree $l$. Suppose that
$\varphi_2,\ldots,\varphi_{l-1}$ have been chosen so
that \eqref{eq:twisted-graph} holds in degrees less than
$l$. Put $\varphi_{<l}=\sum_{j=2}^{l-1}\varphi_j$,
with $\varphi_{<2}=0$, and denote degree-$l$
homogeneous parts by $[\,\cdot\,]_l$.

There is no linear term involving $v$ in either $P$ or
$Q$. Replacing $\varphi_{<l}$ by
$\varphi_{<l}+\varphi_l$ in either of these series
therefore changes it only in degrees at least $l+1$.
Also, $P(u,\varphi_{<l}(u))=Au+O((u)^2)$, so the
degree-$l$ part contributed by the new term on the
right of \eqref{eq:twisted-graph} is
$\sigma(\varphi_l)(Au)$. Thus the equation to solve is
\[
\sigma(\varphi_l)(Au)=R_l(u),\qquad
R_l(u):=
\Big[
Q(u,\varphi_{<l}(u))-
\sigma(\varphi_{<l})\bigl(P(u,\varphi_{<l}(u))\bigr)
\Big]_l.
\]
The residual $R_l$ depends only on the previously
constructed terms. Since $A$ and $\sigma$ are
invertible, the unique solution is $\varphi_l(u)=\sigma^{-1}\bigl(R_l(A^{-1}u)\bigr)$, where $\sigma^{-1}$ acts on every coefficient of the
expression in parentheses. One can now use induction to construct the
unique solution of \eqref{eq:twisted-graph}. 

The series $H(u)=P(u,\varphi(u))$ has linear part $Au$.
Composing $H$ on the right with the coefficientwise
automorphism $\sigma^{-1}$ gives the $L$-algebra
substitution $u\mapsto P(u,\varphi(u))$.
This substitution is invertible by the formal inverse
function theorem over $L$. Hence $H$ is an automorphism.

The map $\theta$ is surjective, and the change of
variables $v\mapsto v+\varphi(u)$ shows that $\ker\theta=(v_1-\varphi_1(u),\ldots,
v_{N-\varrho}-\varphi_{N-\varrho}(u))$. On coefficients and on the variables $u$, the equality
$\theta\circ\Phi=H\circ\theta$ follows from the
definitions. On $v$, it is precisely
\eqref{eq:twisted-graph}. Continuity extends it to all
formal series, giving the commutative square
\[
\begin{tikzcd}[column sep=large]
L[[u,v]] \arrow[r,"\Phi"] \arrow[d,"\theta"']
& L[[u,v]] \arrow[d,"\theta"] \\
L[[u]] \arrow[r,"H"'] & L[[u]].
\end{tikzcd}
\]

As morphisms of $\bC$-schemes, we obtain
\begin{equation}\label{eq:generic-equivariance}
F\circ\kappa=\kappa\circ\Spec H.
\end{equation}
Indeed, both sides take values in the chosen affine
chart: the denominators defining $F\circ\kappa$ have
nonzero constant terms $D_i(\xi)$ and hence are units.
Their pullbacks of the coordinate tuple $z$ are,
respectively, $\theta(F(\xi+w))$ and
$H(\theta(\xi+w))$. These agree because
$\theta\circ\Phi=H\circ\theta$ and
$\Phi(\xi+w)=F(\xi+w)$.

The kernel of $R\mapsto\theta(R(\xi+w))$ is prime
because $L[[u]]$ is an integral domain. Thus $Z$ is
irreducible and $\kappa$ is dominant onto $Z$.
Since $\Spec H$ is an automorphism,
\eqref{eq:generic-equivariance} shows that $\kappa$
and $F\circ\kappa$ have the same image. The morphism
$F$ is finite, hence closed, so $F(Z)=Z$.
If $\theta(R(\xi+w))=0$, its constant term is
$R(\xi)=0$. Every polynomial equation of $Z$ in the
chosen chart therefore vanishes on $W$, and $W\subset Z$.

Finally, the Jacobian of $\theta(\xi+w)$ with respect
to $u$ at zero has rank $\varrho$: its columns are the
first $\varrho$ columns of the inverse linear change
from $(u,v)$ to $w$. Choose $\varrho$ coordinate series
giving a nonzero minor. After subtracting their constant
terms, these series form formal coordinates by the
formal inverse function theorem over $L$. They are
therefore algebraically independent over $L$, as are
the original coordinate series. In particular, those
series are algebraically independent over $\bC$.
It follows that $\dim Z\geq\varrho$.
\end{proof}

\subsection{Specialization and transverse contraction}

We now pass from the formal inverse maps over $L$ to a
holomorphic family of arcs. We first give the moving-center
version of the construction in Lemma~\ref{lem:core-inverse-family}.

\begin{lem}\label{lem:formal-inverse-family}
Let $X$ be a normal irreducible projective variety of dimension $d$, let
$g:X\to X$ be finite and surjective, and let $B\subset X$
be a proper algebraic subset containing $\Sing X$ and the
branch loci of all $g^n$, $n\geq1$.
Suppose that morphisms
\[
\gamma_n:\Spec\bC[[s]]\longrightarrow X,\qquad n\geq0,
\]
satisfy $g^n\circ\gamma_n=\gamma_0$, and that $\gamma_0$
sends the generic point outside $B$. Denote $q_n=\gamma_n(0)$.

Let $\Omega$ be an analytic neighborhood of $q_0$, let
$\Delta\subset\bC$ be a bounded disk centered at zero, and
let $U\subset\bC^d$ be a bounded nonempty polydisk.
There is a holomorphic map $J:\Delta\times U\to\Omega$ of
generic rank $d$, with $J(0,c)=q_0$ for every $c\in U$,
such that, for every $n\geq1$ and $c\in U$, there is a
holomorphic map $\beta_{n,c}:\Delta\to X$ satisfying
\[
g^n\circ\beta_{n,c}=J(\,\cdot\,,c),\qquad
\beta_{n,c}(0)=q_n.
\]
\end{lem}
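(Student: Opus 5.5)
This is the moving-center version of Lemma~\ref{lem:core-inverse-family}, and I would copy that proof almost verbatim. The only changes are that the formal arcs $\gamma_n$ are now given as hypotheses rather than built from $\widehat h$, and that the centers $q_n$ may differ from $q_0$.

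\emph{Step 1: resolution and reference arc.} Take a projective log resolution $\pi:Y\to X$ which is an isomorphism outside $B$ and such that $D:=\pi^{-1}(B)$ is a simple normal crossing divisor. Since $\gamma_0$ sends the generic point outside $B$, its restriction to $\Spec\bC((s))$ lifts uniquely through $\pi$. By the valuative criterion for properness this lift extends to $\delta:\Spec\bC[[s]]\to Y$ with $\pi\circ\delta=\gamma_0$; let $y_0:=\delta(0)$, so $\pi(y_0)=q_0$. Choose a polydisk $P$ around $y_0$ with coordinates $u_1,\dots,u_d$ such that $D\cap P=\{u_1\cdots u_k=0\}$ and $\pi(P)\subset\Omega$; shrinking $P$ is harmless. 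Put $a_i:=\operatorname{ord}_s\widehat\delta^*(u_i)$ for $i\le k$. These are finite and positive, because a local equation of $D$ has nonzero image under $\delta^*$ (the generic point lies off $B$) and vanishes at the closed point.

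\emph{Step 2: lifting the formal arcs to the covers.} For $n\ge1$, let $Y_n$ be the normalization of the union of the components of $Y\times_{\pi,X,g^n}X$ that dominate $Y$, with projections $\rho_n:Y_n\to Y$ and $\tau_n:Y_n\to X$. Because $B$ contains $\Sing X$ and all $\operatorname{Br}(g^n)$, the map $\rho_n$ is finite and unramified over $Y\setminus D$. The pair $(\delta,\gamma_n)$ is compatible since $g^n\circ\gamma_n=\gamma_0=\pi\circ\delta$, so it gives an arc into the fiber product whose generic point lies over $Y\setminus D$. Exactly as before, this arc lifts, and extends by the valuative criterion, to $\lambda_n:\Spec\bC[[s]]\to Y_n$ with $\rho_n\circ\lambda_n=\delta$ and $\tau_n\circ\lambda_n=\gamma_n$. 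In particular $\xi_n:=\lambda_n(0)$ satisfies $\tau_n(\xi_n)=q_n$, and $\operatorname{ord}_s\widehat\lambda_n^*(u_i\circ\rho_n)=a_i$.

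\emph{Step 3: the holomorphic family.} For $i\le k$ choose $\zeta_i$ with $|\zeta_i|>\sup_U|c_i|$. Define $\Psi:\Delta\times U\to P$ by $u_i(\Psi(t,c))=\epsilon(c_i-\zeta_i)t^{a_i}$ for $i\le k$ and $u_i(\Psi(t,c))=\epsilon c_it$ for $i>k$, with $\epsilon>0$ small enough (using boundedness of $\Delta$ and $U$) that $\Psi$ lands in $P$. Set $J:=\pi\circ\Psi$. Then $J(0,c)=\pi(y_0)=q_0$ and $J$ takes values in $\Omega$. For fixed $t\neq0$, the map $c\mapsto\Psi(t,c)$ is biholomorphic onto an open subset of $P$, and $\pi$ is birational of generic rank $d$ on $P$; hence $J$ has generic rank $d$, which is also the maximum possible since $J$ factors through $P$.

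\emph{Step 4: the inverse arcs.} For each $n$ and $c$, the arc $\Psi(\cdot,c)$ avoids $D$ for $t\neq0$ and has contact orders $a_i$ with the $D_i$, the same as those of $\delta$. Apply Lemma~\ref{lem:arc-transfer} to the finite map $\rho_n^{-1}(P)\to P$ together with the formal arc $\widehat\lambda_n^*$. This yields a holomorphic $\widetilde\Psi_{n,c}:\Delta\to Y_n$ with $\rho_n\circ\widetilde\Psi_{n,c}=\Psi(\cdot,c)$ and $\widetilde\Psi_{n,c}(0)=\xi_n$. Then $\beta_{n,c}:=\tau_n\circ\widetilde\Psi_{n,c}$ satisfies
\[
g^n\circ\beta_{n,c}=\pi\circ\rho_n\circ\widetilde\Psi_{n,c}=J(\cdot,c),\qquad \beta_{n,c}(0)=\tau_n(\xi_n)=q_n.
\]
The main point to check is this application of Lemma~\ref{lem:arc-transfer}. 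It needs a single disk $\Delta$, independent of $n$, on which the lift exists. This holds because $\Delta$ is simply connected, $\rho_n$ is a covering over $P\setminus D$, and the local structure near $\xi_n$ is of power type; this is where equality of the contact orders guarantees that the lift through the correct branch is centered at $\xi_n$. Beyond that, the argument is identical to the fixed-center case, since the fixed-point property was never used except to identify $q_n$ with $q_0$.
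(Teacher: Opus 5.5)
Your proposal is correct and follows the paper's proof essentially step for step: the log resolution and valuative lift $\delta$, the normalized dominant parts $Y_n$ of the fiber products with lifted formal arcs $\lambda_n$ sharing the contact orders $a_i$, the weighted family $\Psi$, and Lemma~\ref{lem:arc-transfer} applied to $\rho_n^{-1}(P)\to P$. One small correction to your closing heuristic: the lift is constructed over $\Delta\setminus\{0\}$, which is not simply connected, and simple connectivity of $\Delta$ plays no role; the matching contact orders are what make the monodromy around the puncture trivial in the component of $\rho_n^{-1}(P)$ containing $\xi_n$ (so that a lift exists there at all, not merely that the right one is picked) --- this is precisely the content of Lemma~\ref{lem:arc-transfer}, so citing it is enough.
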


\begin{proof}
Choose a projective log resolution $\pi:Y\to X$ which is
an isomorphism outside $B$, with
$D=\pi^{-1}(B)_{\mathrm{red}}$ a simple normal crossing
divisor. The restriction of $\gamma_0$ to
$\Spec\bC((s))$ lifts uniquely through $\pi$.
By the valuative criterion for properness, this lift extends
to a morphism $\delta:\Spec\bC[[s]]\to Y$ satisfying
$\pi\circ\delta=\gamma_0$. Let $y_0=\delta(0)$.

For $n\geq1$, let $Y_n$ be the normalization of the
reduced union of the irreducible components of
$Y\mathbin{\times}_{\pi,X,g^n}X$ dominating $Y$.
The projections induce maps $\rho_n:Y_n\to Y$ and
$\tau_n:Y_n\to X$ with
$\pi\circ\rho_n=g^n\circ\tau_n$.
The map $\rho_n$ is finite and surjective, and is
unramified outside $D$. Over $Y\setminus D$, the fiber
product is finite \'etale over the smooth variety
$Y\setminus D$, so normalization is an isomorphism there.
Thus the pair $(\delta,\gamma_n)$ lifts over
$\Spec\bC((s))$ to $Y_n$. Properness of the map from
$Y_n$ to the fiber product extends this lift to
$\lambda_n:\Spec\bC[[s]]\to Y_n$, and
\[
\rho_n\circ\lambda_n=\delta,\qquad
\tau_n\circ\lambda_n=\gamma_n.
\]
Let $\xi_n=\lambda_n(0)$. Then
$\rho_n(\xi_n)=y_0$ and $\tau_n(\xi_n)=q_n$.
\[
\begin{tikzcd}[column sep=large, row sep=small]
  \Spec\bC[[s]]
    \arrow[dr, "\lambda_n"]
    \arrow[drr, bend left=18, "\delta"]
    \arrow[ddr, bend right=18, "\gamma_n"'] & & \\
  & Y_n \arrow[r, "\rho_n"] \arrow[d, "\tau_n"']
  & Y \arrow[d, "\pi"] \\
  & X \arrow[r, "g^n"'] & X
\end{tikzcd}
\]

Choose a coordinate polydisk $P$ about $y_0$ such that
$\pi(P)\subset\Omega$, with coordinates $u_1,\ldots,u_d$
centered at $y_0$, and with
$D\cap P=\{u_1\cdots u_k=0\}$.
Let $\widehat\delta^*$ be the homomorphism on completed
local rings induced by $\delta$, and put
\[
a_i=\operatorname{ord}_s\widehat\delta^*(u_i),
\qquad 1\leq i\leq k.
\]
These integers are positive and finite because the generic
point of $\delta$ lies outside $D$.
For each $n$, the homomorphism on completed local rings
induced by $\lambda_n$ sends $u_i\circ\rho_n$ to
$\widehat\delta^*(u_i)$, so it has the same contact orders
$a_1,\ldots,a_k$.

Choose $\zeta_i\in\bC$ with
$|\zeta_i|>\sup_{c\in U}|c_i|$ for $1\leq i\leq k$.
For sufficiently small $\epsilon>0$, the formulas
\[
u_i(\Psi(s,c))=
\begin{cases}
\epsilon(c_i-\zeta_i)s^{a_i},&1\leq i\leq k,\\
\epsilon c_i s,&k<i\leq d
\end{cases}
\]
define a holomorphic map $\Psi:\Delta\times U\to P$.
Set $J=\pi\circ\Psi$.
For every $c\in U$, the arc $\Psi(\,\cdot\,,c)$ is
centered at $y_0$, avoids $D$ away from zero, and has
contact orders $a_1,\ldots,a_k$.

Every irreducible component of $\rho_n^{-1}(P)$ has
dimension $d$ and dominates $P$.
Lemma~\ref{lem:arc-transfer}, applied to
$\rho_n^{-1}(P)\to P$ and the homomorphism induced by
$\lambda_n$, therefore gives a holomorphic lift
$\widetilde\Psi_{n,c}:\Delta\to Y_n$ satisfying
\[
\rho_n\circ\widetilde\Psi_{n,c}=\Psi(\,\cdot\,,c),
\qquad \widetilde\Psi_{n,c}(0)=\xi_n.
\]
Then $\beta_{n,c}=\tau_n\circ\widetilde\Psi_{n,c}$ has
the required identities and center.
For $s\ne0$, the map $c\mapsto\Psi(s,c)$ is
biholomorphic onto an open subset of $P\setminus D$,
where $\pi$ is an isomorphism. Hence $J$ has generic
rank $d$.
\end{proof}

Let $V$ be an analytic neighborhood of $p$ in $\bP^N$.
Let $\Delta\subset\bC$ be a bounded disk centered at zero,
with coordinate $s$, and let $U\subset\bC^{\dim Z}$ be a
bounded nonempty polydisk.

\begin{lem}\label{lem:transverse-specialization}
Assume that $r<N$, that $\Phi(u)\equiv Au$ and
$\Phi(v)\equiv0$ modulo $(u,v)^2$, with $A$ invertible,
and that $\varphi$ satisfies \eqref{eq:twisted-graph}.
The inverse germs $h^{-n}$ extend holomorphically to a
common connected neighborhood of $p$ in a representative
of $M$. There are a point $x$ in that neighborhood and a
holomorphic map $J:\Delta\times U\to Z\cap V$ of
generic rank $\dim Z$, with $J(0,c)=x$ for every $c\in U$,
such that, for every $n\geq1$ and $c\in U$, there is a
holomorphic map $\beta_{n,c}:\Delta\to\bP^N$ satisfying
\[
F^n\circ\beta_{n,c}=J(\,\cdot\,,c),\qquad
\beta_{n,c}(0)=h^{-n}(x).
\]
The extensions satisfy $F\circ h^{-(n+1)}=h^{-n}$ for
$n\geq0$, and all the points $h^{-n}(x)$ lie in one
compact subset of $V$.
\end{lem}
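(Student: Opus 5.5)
The plan is to reduce to Lemma~\ref{lem:formal-inverse-family}, applied to the normalization of $Z$. The inverse germs will be built from the inverse maps along $M$.

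\emph{Step 1: inverse extensions along $M$.} By Theorem~\ref{thm:algebraic-core}, $M$ corresponds to a smooth fixed point $q$ of the map $g_W$ induced by $F$ on the normalization of $W$. The map $g_W$ is locally biholomorphic at $q$, and the normalization map is a biholomorphism from the germ at $q$ onto $M$. Lemma~\ref{lem:normalization-boundary} and Lemma~\ref{lem:fixed-inverses}, applied to $W$ and $F$, give a connected neighborhood $U_M$ of $q$ and inverses $b_n$ with $g_W\circ b_{n+1}=b_n$. The family $(b_n)$ is normal, and for any prescribed neighborhood, all the $b_n$ map a smaller neighborhood $U_0$ into it. Pushing forward to $M$ gives extensions of $h^{-n}$ on a common neighborhood with $F\circ h^{-(n+1)}=h^{-n}$. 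Choosing $U_0$ with closure compact inside $\nu_W^{-1}(V)$, every point $h^{-n}(x)$ with $x\in U_0$ lies in one compact subset of $V$.

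\emph{Step 2: formal arcs on $Z$ with inverse lifts.} Let $\nu_Z\colon X\to Z$ be the normalization, which is normal and projective of dimension $d=\dim Z$. Let $g$ be induced by $F$; Lemma~\ref{lem:twisted-algebraic-image} gives $F(Z)=Z$. Lemma~\ref{lem:normalization-boundary} gives a proper forward-invariant $B\subset X$ containing $\Sing X$ and all branch loci. The morphism $\kappa\colon\Spec L[[u]]\to Z$ is dominant, and $\Spec H$ is an automorphism with $F\circ\kappa=\kappa\circ\Spec H$. So, exactly as in Lemma~\ref{lem:core-normalization}, $\kappa$ lifts to $\widetilde\kappa$ on $X$, and $g^n\circ\widetilde\kappa\circ\Spec(H^{-n})=\widetilde\kappa$.

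The obstacle is that these arcs are defined over $L$, not $\bC$. I will specialize them by choosing a point $x\in M$, generic and close to $p$. The coefficients in $L=\bigcup\sigma^{-n}K$ are meromorphic germs on $M$, and $\sigma^{-n}b=b\circ h^{-n}$ is holomorphic wherever $h^{-n}$ is. To make this work, I pick one line $u=t v$ avoiding the leading form of an equation of $B$, as in Lemma~\ref{lem:core-inverse-family}. I also choose $x$ outside the countably many polar and indeterminacy loci of the finitely many coefficients needed at each truncation level. Since $M$ is not contained in a countable union of proper analytic subsets, such an $x$ exists.

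Specializing coefficientwise at $x$ sends $\sigma^{-n}(b)$ to $b(h^{-n}(x))$. This produces morphisms $\gamma_n\colon\Spec\bC[[s]]\to X$. The center of $\gamma_n$ lies over $h^{-n}(x)$, because the constant term of $\theta(\xi+w)$ is $\xi$, twisted by $\sigma^{-n}$. The relation $g^n\circ\gamma_n=\gamma_0$ holds because specialization is a ring homomorphism compatible with the twisted identity. The generic point of $\gamma_0$ stays outside $B$ by the choice of $x$ and $v$.

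\emph{Step 3: conclusion.} This specialization step is the delicate one. It requires $\sigma$-twisted coefficients to specialize consistently, and $x$ to lie on the image $W\subset Z$ in the region where $\nu_Z$ is a local isomorphism or at least where the centers are well defined. Once it is done, Lemma~\ref{lem:formal-inverse-family} applies with $\Omega=\nu_Z^{-1}(V)$. It gives $J'\colon\Delta\times U\to\Omega$ of generic rank $d$ with $J'(0,c)=q_0$, and lifts $\beta'_{n,c}$ centered at $q_n$. Set $J=\nu_Z\circ J'$ and $\beta_{n,c}=\nu_Z\circ\beta'_{n,c}$. Then $J$ has generic rank $\dim Z$, since $\nu_Z$ is finite and birational. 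We also get $F^n\circ\beta_{n,c}=J(\cdot,c)$ and $\beta_{n,c}(0)=\nu_Z(q_n)=h^{-n}(x)$.
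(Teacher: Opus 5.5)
Your proposal follows the paper's proof essentially step by step. The shared steps are: common inverse branches on the normalization of $W$ via Lemma~\ref{lem:fixed-inverses}; the dominant lift $\widetilde\kappa$ to the normalization of $Z$ with $g^n\circ\widetilde\kappa\circ\Spec H^{-n}=\widetilde\kappa$; restriction to a line $u=sv_0$ avoiding the leading form of an equation $b$ of $B$; specialization of the countably many coefficients at a Baire-generic point $x$ of the common neighborhood; and finally Lemma~\ref{lem:formal-inverse-family} with $\Omega=\nu_Z^{-1}(Z\cap V)$. The point $x$ must also avoid the zero set of one nonzero coefficient of the specialized series of $b$, so that $\gamma_0$ stays off $B$.

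The hedge in your Step~3 is unnecessary, since no local-isomorphism property of $\nu_Z$ near $x$ is used: the centers $\gamma_n(0)$ are whatever points the specialized homomorphisms define, and only $\nu_Z(\gamma_n(0))=h^{-n}(x)$ matters. Likewise, in Step~1 it is the common target of the $b_n$, not $U_0$, that should be relatively compact in $\nu_W^{-1}(V)$.
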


\begin{proof}
We first choose common representatives of the inverse germs.
Let $\nu_W:X_W\to W$ be the normalization, and let
$q_W$ be the smooth point corresponding to $M$.
The induced finite endomorphism $g_W:X_W\to X_W$ fixes
$q_W$ and is locally biholomorphic there, since
$F\vert_M=h$ is locally biholomorphic.
Choose a relatively compact neighborhood $O_0$ of $q_W$
whose closure lies in a neighborhood on which $\nu_W$
identifies $X_W$ with a representative of $M$ in the
fixed affine chart, and such that
$\nu_W(\overline O_0)\subset V$.
By Lemma~\ref{lem:fixed-inverses}, applied at the smooth
fixed point $q_W$, all inverse germs of $g_W^n$ fixing
$q_W$ extend to a common connected neighborhood $U_W$,
which can be chosen so that their images lie in $O_0$.
Transporting these maps by $\nu_W$ gives holomorphic
representatives $h^{-n}$ on
$V_M:=\nu_W(U_W)$, satisfying
$F\circ h^{-(n+1)}=h^{-n}$ for every $n\geq0$.
Their images lie in the compact subset
$\nu_W(\overline O_0)$ of $V$.

For $a\in K$ and $n\geq0$, the germ $\sigma^{-n}(a)$
is represented on $V_M$ by $a\circ h^{-n}$.
Indeed, write $a=P/Q$ in the fixed affine coordinates.
The germ of $Q\circ h^{-n}$ is nonzero, so this quotient
defines a meromorphic function on $V_M$.
Representatives with the same germ agree by the identity
theorem on the connected manifold $V_M$.
Thus $L$ is identified with a field of meromorphic
functions on this common neighborhood.

By Lemma~\ref{lem:twisted-algebraic-image}, $H$ is an
automorphism, $Z$ is irreducible, $F(Z)=Z$, and $\kappa$
is dominant onto $Z$, with
$F\circ\kappa=\kappa\circ\Spec H$.
Let $\nu:X\to Z$ be the normalization, and let
$g:X\to X$ be the finite endomorphism induced by $F$.
Since $\Spec L[[u]]$ is normal and $\kappa$ is dominant,
there is a unique dominant lift
$\widetilde\kappa:\Spec L[[u]]\to X$.
Both $g\circ\widetilde\kappa$ and
$\widetilde\kappa\circ\Spec H$ lift $F\circ\kappa$.
Uniqueness therefore gives the commutative diagram
\[
\begin{tikzcd}[column sep=large]
\Spec L[[u]] \arrow[r, "\widetilde\kappa"]
  \arrow[d, "\Spec H"']
& X \arrow[r, "\nu"] \arrow[d, "g"]
& Z \arrow[d, "F"] \\
\Spec L[[u]] \arrow[r, "\widetilde\kappa"']
& X \arrow[r, "\nu"'] & Z
\end{tikzcd}
\]

Let $Z_0$ be the intersection of $Z$ with the fixed
affine chart, let $X_0=\nu^{-1}(Z_0)$, and let
$S=\bC[X_0]$. The variety $X_0$ is affine because
$\nu$ is finite. For $n\geq0$, put
$\widetilde\kappa_n=\widetilde\kappa\circ\Spec H^{-n}$.
These morphisms all take values in $X_0$, and
\begin{equation}\label{eq:generic-inverse-morphisms}
g^n\circ\widetilde\kappa_n=\widetilde\kappa,
\qquad
g\circ\widetilde\kappa_{n+1}=\widetilde\kappa_n.
\end{equation}
Modulo $(u)$, the automorphism $H^{-n}$ acts on $L$ by
$\sigma^{-n}$. Hence the constant terms of the affine
coordinate series of $\nu\circ\widetilde\kappa_n$ are
$\sigma^{-n}(\xi)$.

By Lemma~\ref{lem:normalization-boundary}, applied to
$F$ and $Z$, there is a proper algebraic subset
$B\subset X$ containing $\Sing X$ and the branch loci
of every $g^n$. Choose a nonzero $b\in S$ vanishing on
$B\cap X_0$; if this intersection is empty, take $b=1$.
Dominance of $\widetilde\kappa$ gives
$\widetilde\kappa^*(b)\ne0$.
Choose $v_0\in\bC^{\varrho}\setminus\{0\}$ where the
first nonzero homogeneous term of this series does not
vanish. This is possible because a nonzero polynomial
over $L$ cannot vanish on all of $\bC^{\varrho}$.
For $n\geq0$, define a homomorphism of $\bC$-algebras
\begin{equation}\label{eq:specialization-series}
\lambda_n:S\longrightarrow L[[s]],\qquad
\lambda_n(a)=
\bigl(H^{-n}(\widetilde\kappa^*(a))\bigr)(s v_0).
\end{equation}
In particular, $\lambda_0(b)\ne0$.

We now specialize all these series at one point of $V_M$.
Let $a_1,\ldots,a_\ell$ generate $S$ as a $\bC$-algebra,
and write
\[
\lambda_n(a_j)=\sum_{l\geq0}c_{n,j,l}s^l.
\]
Let $R\subset L$ be the $\bC$-algebra generated by the
countable collection of coefficients $c_{n,j,l}$.
Then $\lambda_n(S)\subset R[[s]]$ for every $n$.
Choose a nonzero coefficient $c_*\in R$ of $\lambda_0(b)$.
All these functions are meromorphic on $V_M$.
Their pole sets, together with the zero set of $c_*$,
form a countable union of proper analytic subsets.
By the Baire category theorem, there is $x\in V_M$
outside this union.
Every element of $R$ is holomorphic at $x$, and
$c_*(x)\ne0$.

Evaluation at $x$ defines a homomorphism $R\to\bC$ and,
coefficient by coefficient, a homomorphism
$R[[s]]\to\bC[[s]]$.
Composing with $\lambda_n$ gives a homomorphism
$S\to\bC[[s]]$, hence a morphism
$\gamma_n:\Spec\bC[[s]]\to X_0$.
The specialized series $\gamma_0^*(b)$ is nonzero because
its coefficient $c_*(x)$ is nonzero. Thus $\gamma_0$
sends the generic point outside $B$.

The inverse identities are preserved under this specialization.
For $n\geq1$, the set
\[
\Gamma_n=\{(q_1,q_2)\in X_0\times X_0:
g^n(q_1)=q_2\}
\]
is closed, and its equations are polynomials in the
generators $a_j$ on the two factors.
By \eqref{eq:generic-inverse-morphisms}, these equations
vanish on $(\widetilde\kappa_n,\widetilde\kappa)$.
Substitution of $u=s v_0$ and evaluation of the
coefficients at $x$ preserve each equation. Consequently, $g^n\circ\gamma_n=\gamma_0$. The same argument applied to
$(\widetilde\kappa_{n+1},\widetilde\kappa_n)$ gives
$g\circ\gamma_{n+1}=\gamma_n$.

Put $x_n=h^{-n}(x)$ for $n\geq0$.
For each affine coordinate $z_i$, the constant term of
$\lambda_n(\nu^*z_i)$ is $\sigma^{-n}(\xi_i)$,
whose value at $x$ is $z_i(x_n)$.
It follows that
\[
\nu(\gamma_n(0))=x_n,\qquad F(x_{n+1})=x_n.
\]
All the points $x_n$ lie in the compact subset
$\nu_W(\overline O_0)$ of $V$ chosen above.

Now we apply Lemma~\ref{lem:formal-inverse-family} to
$X,g,B$ and the arcs $\gamma_n$, with the prescribed
$\Delta,U$ and with $\Omega=\nu^{-1}(Z\cap V)$.
It gives a holomorphic map
$\widetilde J:\Delta\times U\to\Omega$ of generic rank
$\dim X$, centered at $\gamma_0(0)$, and holomorphic lifts
$\widetilde\beta_{n,c}:\Delta\to X$ satisfying
\[
g^n\circ\widetilde\beta_{n,c}
=\widetilde J(\,\cdot\,,c),\qquad
\widetilde\beta_{n,c}(0)=\gamma_n(0).
\]
Set $J=\nu\circ\widetilde J$ and
$\beta_{n,c}=\nu\circ\widetilde\beta_{n,c}$.
Then $J$ takes values in $Z\cap V$, $J(0,c)=x$, and
\[
F^n\circ\beta_{n,c}=J(\,\cdot\,,c),\qquad
\beta_{n,c}(0)=x_n.
\]
Since $\nu$ is birational and $\widetilde J$ has generic
rank $\dim X$, the generic rank of $J$ is $\dim Z$.
\end{proof}

\begin{proof}[Proof of Theorem~\ref{thm:transverse}]
If $r=N$, then $W=\bP^N$ and there is nothing to prove. Thus we assume that $r<N$.

By Theorem~\ref{thm:algebraic-core} and
Lemma~\ref{lem:invariant-series}, there is an iterate
$F_0=f^a$ fixing $p$, preserving $M$ and $W$, locally
biholomorphic on $M$, and inducing zero on
$T_p\bP^N/T_pM$.
Let $\Phi_0$ and $\overline\Phi_0$ be the maps in
\eqref{eq:twisted-Phi} and \eqref{eq:twisted-linear}
for $F_0$.
By Lemma~\ref{lem:twisted-generic-rank}, there is an
integer $b\geq1$ such that $\overline\Phi_0^b$ maps a
$\varrho$-dimensional summand bijectively onto itself
and vanishes on a complementary summand, where
$\varrho$ is the eventual generic rank of the
differentials of the iterates of $F_0$ along $W$.
Let $F=F_0^b=f^m$, where $m=ab$, and use
$h,K,L,\sigma,\xi,\Phi$ for the objects defined above
with this $F$.
By Lemma~\ref{lem:twisted-generic-rank}, the field $L$
and the integer $\varrho$ are unchanged, and
\eqref{eq:twisted-iterates} gives $\Phi=\Phi_0^b$.
The differential of $F$ still induces zero on
$T_p\bP^N/T_pM$.

Choose $L$-linear coordinates $(u,v)$ adapted to these
two summands, with $\varrho$ coordinates in $u$.
Then $\Phi(u)\equiv Au$ modulo $(u,v)^2$ for an
invertible matrix $A$, and
$\Phi(v)\equiv0$ modulo $(u,v)^2$.
By Lemma~\ref{lem:twisted-algebraic-image}, there is a
unique solution $\varphi$ of \eqref{eq:twisted-graph}.
Let $\theta,H,\kappa,Z$ be the resulting objects.
That lemma and Lemma~\ref{lem:twisted-generic-rank} give $W\subset Z$, $r\leq\varrho\leq\dim Z$.

Let $e=(e_1,\ldots,e_{N-r})$ be holomorphic local
equations for the smooth germ $M$, with linearly
independent differentials at $p$.
Since $F$ preserves $M$, there is a holomorphic matrix
$C$ near $p$ such that
\begin{equation}\label{eq:analytic-contraction}
e(F(z))=C(z)e(z),\qquad C(p)=0.
\end{equation}
Here $C(p)=0$ because the differential of $F$ induces
zero on $T_p\bP^N/T_pM$.
Choose a relatively compact neighborhood $V$ of $p$
on which \eqref{eq:analytic-contraction} holds, $e$ is
bounded, and $\lVert C(z)\rVert\leq c_0<1$.
We may also assume that $\{e=0\}\cap V$ is a
representative of $M$.

Let $\Delta\subset\bC$ be a bounded disk centered at
zero, and let $U\subset\bC^{\dim Z}$ be a bounded
nonempty polydisk.
Lemma~\ref{lem:transverse-specialization} gives a point
$x$, a holomorphic map $J:\Delta\times U\to Z\cap V$
of generic rank $\dim Z$, and holomorphic maps
$\beta_{n,c}:\Delta\to\bP^N$ such that
\[
F^n\circ\beta_{n,c}=J(\,\cdot\,,c),\qquad
\beta_{n,c}(0)=x_n:=h^{-n}(x).
\]
The centers satisfy $F(x_{n+1})=x_n$ and lie in one
compact subset of $V$.

Fix $c\in U$. For $n\geq1$ and $0\leq k\leq n$,
put
\[
\beta_{n,k}=F^{n-k}\circ\beta_{n,c}.
\]
Then
\[
F^k\circ\beta_{n,k}=J(\,\cdot\,,c),\qquad
\beta_{n,k}(0)=x_k,
\]
and $F\circ\beta_{n,k}=\beta_{n,k-1}$ for $1\leq k\leq n$.
By Lemma~\ref{lem:inverse-normality}, the family of all
$\beta_{n,k}$ is normal on $\Delta$.
Its values at zero lie in one compact subset of $V$.
Equicontinuity at zero therefore gives a disk
$\Delta_c\subset\Delta$, centered at zero, on which
every $\beta_{n,k}$ takes values in $V$.
Repeated application of \eqref{eq:analytic-contraction}
along these intermediate arcs gives
\[
\lVert e(J(s,c))\rVert
\leq c_0^n\sup_{z\in V}\lVert e(z)\rVert,
\qquad s\in\Delta_c,\quad n\geq1.
\]
Letting $n\to\infty$, we obtain $e(J(s,c))=0$ on
$\Delta_c$. Since $J(\Delta\times U)\subset V$, the
identity theorem extends this equality to all of $\Delta$.
This holds for every $c\in U$, so $J$ takes values in
$M$ and has rank at most $r$.
Consequently
\[
r\leq\varrho\leq\dim Z\leq r.
\]
Thus $\varrho=r$, and for every sufficiently large $n$
the differential $d(F^n)_x$ has rank $r$ for general
$x\in W$, as required.
\end{proof}

\subsection{Capture and local branches}

We now turn Theorem~\ref{thm:transverse} into an ideal-sheaf inclusion along the whole algebraic core. We will use this ideal-sheaf formulation to prove new constraints on Fatou limit maps and to compare the local branches at a periodic point. 

For an irreducible algebraic subvariety $W\subset\bP^N$, let
$\mathcal I_W$ be its algebraic ideal sheaf and let $\eta$ be its
generic point. For $k\geq1$, the symbolic power
$\mathcal I_W^{(k)}$ consists, on each Zariski-open subset of $\bP^N$ intersecting $W$, of the regular functions whose germs at $\eta$ belong to $(\mathcal I_W\cO^{\rm alg}_{\bP^N,\eta})^k$. Equivalently, its sections
vanish to order at least $k$ along $W$ at general smooth points.
We use the same notation for the associated analytic ideal sheaves.
For a self-map $G$ of $\bP^N$, the ideal sheaf $G^*\mathcal I_W$ is
generated locally by the functions $u\circ G$, where $u$ is a local
section of $\mathcal I_W$ near the image point.

Let $W\subset\bP^N$ be an algebraic core of $f$, constructed by Theorem~\ref{thm:algebraic-core}.

\begin{cor}\label{cor:ideal-square}
There is an integer $a\geq1$ such that $f^a(W)=W$ and
$(f^a)^*\mathcal I_W\subset\mathcal I_W^2$.
\end{cor}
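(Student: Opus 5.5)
The plan is to pass from the generic rank statement of Theorem~\ref{thm:transverse} to a vanishing-order statement along $W$, iterate it to get vanishing of arbitrarily high order in the symbolic sense, and then convert symbolic powers into ordinary powers by Ein--Lazarsfeld--Smith~\cite{ELS2001}.

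\emph{Step 1: the conormal differential vanishes generically.} Let $m$ be as in Theorem~\ref{thm:transverse}, and fix $n$ large so that $G:=f^{mn}$ satisfies $G(W)=W$ and $\operatorname{rank} d_xG=r$ for general $x\in W$. Since $G\vert_W$ is finite and surjective, it is generically étale. Hence for general $x\in W_{\rm reg}$ the image $d_xG(T_xW)$ already equals $T_{G(x)}W$, which has dimension $r$. The rank condition then forces $\operatorname{im} d_xG=T_{G(x)}W$.

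Let $u$ be a local section of $\mathcal I_W$ near $G(x)$. Then $du$ vanishes on $T_{G(x)}W$, so $d(u\circ G)_x=du_{G(x)}\circ d_xG=0$. Moreover $u\circ G$ vanishes on $W$. Thus $u\circ G$ vanishes to order at least $2$ at general points of $W$, that is, $G^*\mathcal I_W\subset\mathcal I_W^{(2)}$. This uses that $G$ maps the generic point $\eta$ of $W$ to $\eta$, so the condition may be checked at $\eta$, equivalently at general smooth points.

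\emph{Step 2: doubling of symbolic order.} Next I show $G^*\mathcal I_W^{(k)}\subset\mathcal I_W^{(2k)}$ for every $k$. Near a general smooth point $y=G(x)$ of $W$, with $x$ also a general smooth point, the ideal $\mathcal I_W$ is generated by part of a coordinate system. There the analytic germ of $\mathcal I_W^{(k)}$ at $y$ equals $\mathcal I_{W,y}^k$. So a section $u$ of $\mathcal I_W^{(k)}$ is a sum of products of $k$ elements of $\mathcal I_{W,y}$. Composing with $G$ and using Step 1 at $x$ gives
\[
u\circ G\in(\mathcal I_{W,x}^{(2)})^k=\mathcal I_{W,x}^{2k}.
\]
Since this holds at general points of $W$, equivalently at $\eta$, the claim follows. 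By induction, $(G^j)^*\mathcal I_W\subset\mathcal I_W^{(2^j)}$ for every $j\geq1$.

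\emph{Step 3: from symbolic to ordinary powers.} By~\cite{ELS2001}, for a radical ideal in a regular ring of dimension $N$ (here the local rings of $\bP^N$) one has $I^{(Nk)}\subset I^k$. This is a local statement, so it globalizes to ideal sheaves. Choose $j$ with $2^j\geq 2N$ and set $a:=mnj$. Then
\[
(f^a)^*\mathcal I_W\subset\mathcal I_W^{(2N)}\subset\mathcal I_W^2,
\]
and $f^a(W)=W$.

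\emph{Main obstacle.} I expect the main care to be needed in Step 2. One must justify that symbolic powers may be tested at the generic point after pulling back, which requires that $G^{-1}(W)$ has no other component through which $W$'s generic-point condition could fail. Since $G$ maps $\eta$ to $\eta$, only the behaviour at general points of $W$ matters. One must also check that symbolic and ordinary powers agree analytically near general smooth points of $W$.
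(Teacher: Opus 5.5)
Your proposal is correct and follows essentially the same route as the paper: conormal vanishing at general points of $W$ from Theorem~\ref{thm:transverse}, iteration to get $(F^n)^*\mathcal I_W\subset\mathcal I_W^{(2^n)}$, then Ein--Lazarsfeld--Smith to land in $\mathcal I_W^2$. The paper compresses your Step~2 into the observation that pullback is a local endomorphism of $\cO^{\rm alg}_{\bP^N,\eta}$ sending $\mathfrak m_\eta$ into $\mathfrak m_\eta^2$. Since membership in a symbolic power of the prime $\mathcal I_W$ is decided at $\eta$ alone, this also disposes of the worry in your last paragraph about other components of $G^{-1}(W)$.
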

\begin{proof}
If $W=\bP^N$, then $\mathcal I_W=0$ and we may take $a=1$.
Suppose that $W\ne\bP^N$. By Theorem~\ref{thm:transverse},
there is an iterate $F=f^m$ preserving $W$ whose differential has
generic rank $\dim W$ along $W$. Since $F\vert_W$ is finite and
dominant, its differential also has rank $\dim W$ at a general
smooth point $x\in W$ with smooth image. Thus
\[
\operatorname{im}d_xF=T_{F(x)}W,
\]
and pullback by $F$ vanishes on the conormal space of $W$.

Since $F(\eta)=\eta$, pullback induces a local endomorphism of
the regular local ring $\cO^{\rm alg}_{\bP^N,\eta}$. The
conormal vanishing says that this endomorphism sends the maximal
ideal $\mathfrak m_\eta$ into $\mathfrak m_\eta^2$. Its $n$-th
iterate therefore sends $\mathfrak m_\eta$ into
$\mathfrak m_\eta^{2^n}$. By the definition of symbolic powers,
\begin{equation}\label{eq:transverse-symbolic}
F^*\mathcal I_W\subset\mathcal I_W^{(2)},\qquad
(F^n)^*\mathcal I_W\subset\mathcal I_W^{(2^n)}
\quad(n\geq1).
\end{equation}
By Ein--Lazarsfeld--Smith's theorem~\cite[Theorem A]{ELS2001}, we have
$\mathcal I_W^{(Nk)}\subset\mathcal I_W^k$ for every $k\geq1$.
Choose $n$ such that $2^n\geq2N$. Then
\[
(F^n)^*\mathcal I_W
\subset\mathcal I_W^{(2^n)}
\subset\mathcal I_W^{(2N)}
\subset\mathcal I_W^2,
\]
which proves the assertion with $a=mn$.
\end{proof}

Let $b\geq1$ satisfy $f^b(W)=W$, and let $\Omega$ be a Fatou
component of $f$.

\begin{cor}\label{cor:capture}
If a constant map in $\cH_\Omega(f^b)$ has value in $W$, then
every map in $\cH_\Omega(f^b)$ has image in $W$. The algebraic
core at every non-superattracting periodic point of $W$ is also
contained in $W$.
\end{cor}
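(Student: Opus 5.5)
The plan is to turn the ideal inclusion of Corollary~\ref{cor:ideal-square} into a quantitative, superexponential contraction toward $W$ on a fixed analytic neighborhood of $W$, and then apply it twice. For the Fatou limits, a constant limit in $W$ pushes compact subsets of $\Omega$ into that neighborhood. For the second core, inverse branches on its smooth core germ stay near the periodic point.

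\emph{Step 1 (a common iterate).} Let $a$ be as in Corollary~\ref{cor:ideal-square}. Since $f^a(W)=W$, iterating the inclusion gives $(f^{ak})^*\mathcal I_W\subset\mathcal I_W^{2^k}\subset \mathcal I_W^2$ for every $k\geq1$. Set $G:=f^{ab}$. Then $G(W)=W$ and $G^*\mathcal I_W\subset\mathcal I_W^2$, and the hypotheses on $f^b$ still hold for $G$.

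\emph{Step 2 (quantitative contraction).} Cover the compact set $W$ by finitely many relatively compact Stein charts $V_1,\dots,V_\ell$. On each $V_i$ fix generators $u^{(i)}_1,\dots,u^{(i)}_{k_i}$ of $\mathcal I_W$, and put $\delta_i=\sum_j|u^{(i)}_j|$. By $G^*\mathcal I_W\subset \mathcal I_W^2$, each $u^{(i)}_j\circ G$ is a holomorphic combination of products $u^{(i')}_{l}u^{(i')}_{l'}$ near every point of $W$. Shrinking the charts and using compactness, I expect a neighborhood $V=\bigcup V_i'$ of $W$ and a continuous function $\delta\geq0$ on $V$ with $\{\delta=0\}=W$ such that
\[
\delta(G(z))\leq C\,\delta(z)^2\quad\text{whenever } z\in V,\ G(z)\in V .
\]
I also need that $\delta(z)$ small forces $\operatorname{dist}(z,W)$ small, uniformly. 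This follows from compactness together with the fact that $W$ is the common zero set of the generators, or quantitatively from Łojasiewicz, $\delta\geq c\operatorname{dist}(\cdot,W)^\alpha$. Combining with the continuity of $G$ near $W$, there is $\epsilon>0$ such that $\delta(z)<\epsilon$ implies $G(z)\in V$ and $\delta(G(z))\leq C\epsilon\,\delta(z)\leq\tfrac12\delta(z)$. Hence the set $\{\delta<\epsilon\}$ is $G$-invariant, and on it $\delta(G^n z)\to0$ uniformly; in fact $\delta(G^nz)\leq C^{-1}(C\delta(z))^{2^n}$. I expect this patching of local generators into one global estimate, with control on the images, to be the main technical obstacle.

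\emph{Step 3 (first assertion).} Suppose $f^{bn_j}\to w\in W$ locally uniformly on $\Omega$, and let $K\Subset\Omega$ be compact. For large $j$ we have $f^{bn_j}(K)\subset\{\delta<\epsilon\}$. By Step 2, $\operatorname{dist}(f^{bn_j+abk}(K),W)\to0$ uniformly in $k$, and the convergence improves as $j$ grows. The residual iterates $f^{bt}$ with $0\leq t<a$ preserve $W$ and are uniformly continuous, so $\sup_K\operatorname{dist}(f^{bn}(z),W)\to0$ as $n\to\infty$. Hence every map in $\cH_\Omega(f^b)$ has image in $W$.

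\emph{Step 4 (second assertion).} Let $q\in W$ be a non-superattracting periodic point, with smooth core germ $M_q$ and core $W_q$. By Corollary~\ref{cor:core-independence} I may replace $G$ by an iterate $G'$ fixing $q$ and preserving $M_q$, with $G'$ locally biholomorphic on $M_q$. By Lemma~\ref{lem:fixed-inverses}, applied on the normalization of $W_q$ at the smooth fixed point, the inverse branches $b_n$ of $G'^n$ fixing $q$ are defined on a neighborhood $U\subset M_q$ of $q$. For a given neighborhood of $q$, their images lie in it after shrinking $U$; choose it inside $\{\delta<\epsilon\}$. For $x\in U$ we then have
\[
\delta(x)=\delta\bigl(G'^n(b_n(x))\bigr)\longrightarrow0 ,
\]
so $\delta(x)=0$. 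Thus $U\subset W$, so the germ $M_q$ lies in $W$, and its Zariski closure $W_q$ is contained in $W$ by Corollary~\ref{cor:core-independence}.
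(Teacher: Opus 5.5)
Your proposal is correct. For the first assertion it matches the paper's proof. The paper also takes $G=f^{ab}$ with $G^*\mathcal I_W\subset\mathcal I_W^2$ and builds a continuous $\delta_W$ locally comparable to the maximum modulus of local generators of $\mathcal I_W$. It obtains $\delta_W(Gz)\leq A\,\delta_W(z)^2$ on a neighborhood of $W$, picks $\varepsilon$ with $A\varepsilon\leq 1/2$, and handles the residues modulo $a$ using $f^b(W)=W$. The patching you flag as the main obstacle is handled there in one sentence with a partition of unity. Note that the local estimate is only available on a neighborhood of $W$, not wherever $z,G(z)\in V$, but that is all you actually use.

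For the second assertion you take a genuinely different route. The paper's argument is purely algebraic. It chooses a multiple $k$ of $a$ with $f^k$ fixing $q$ and preserving $M_q$, and restricts $\mathcal I_W$ to $M_q$. This gives an ideal $J\subset\mathfrak m_{M_q,q}$ with $h^*J\subset J^2$ for the local automorphism $h=f^k\vert_{M_q}$. If $J\neq 0$, its minimal vanishing order $d\geq 1$ is preserved by $h^*$, while elements of $J^2$ have order at least $2d$. Hence $J=0$ and $M_q\subset(W,q)$.

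You instead reuse the trapping region $\{\delta<\epsilon\}$ and show that points of $M_q$ whose backward branches stay in it must lie on $W$. This needs the uniformly localized inverse branches of Lemma~\ref{lem:fixed-inverses}, and hence Ueda's normality and the PCF hypothesis. It also needs a little care to transport those branches from the normalization of $W_q$ to $M_q$, which you do correctly.

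Your version gives one dynamical mechanism for both assertions. The paper's version is shorter and uses no normality. It works for any germ at $q$ on which such an iterate restricts to a local automorphism, whatever the behaviour of its backward orbits.
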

\begin{proof}
If $W=\bP^N$, both assertions hold immediately. Suppose that
$W\ne\bP^N$, and let $a$ be as in
Corollary~\ref{cor:ideal-square}. Set $F=f^b$ and
$G=f^{ab}=F^a$. Iterating the ideal inclusion in that corollary
gives $G^*\mathcal I_W\subset\mathcal I_W^2$.

Choose a continuous function $\delta_W:\bP^N\to[0,\infty)$
whose zero set is $W$ and which is locally comparable to the
maximum of the absolute values of finitely many generators of
$\mathcal I_W$. Such a function is obtained by combining these
local maxima on a finite cover with a subordinate partition of
unity. The inclusion $G^*\mathcal I_W\subset\mathcal I_W^2$ and
compactness of $W$ give a constant $A>0$ such that
\begin{equation}\label{eq:ideal-norm-contraction}
\delta_W(Gz)\leq A\delta_W(z)^2
\end{equation}
on a neighborhood of $W$. Choose $\varepsilon>0$ so that
$A\varepsilon\leq1/2$ and $\{\delta_W<\varepsilon\}$ lies in
this neighborhood.

Let $K\subset\Omega$ be compact. A sequence of iterates of $F$
converges on $K$ to a constant with value in $W$, so there is
$n_0\geq0$ such that $\delta_W(F^{n_0}(z))<\varepsilon$ for
all $z\in K$. By \eqref{eq:ideal-norm-contraction} and induction,
\[
\delta_W(F^{n_0+an}(z))
=\delta_W(G^n(F^{n_0}(z)))
\leq2^{-n}\varepsilon
\qquad(z\in K,\ n\geq0).
\]
For each $0\leq j<a$, the map $F^j$ is continuous on
$\bP^N$ and preserves $W$. Consequently
$\delta_W\circ F^{n_0+an+j}\to0$ uniformly on $K$.
These finitely many residue classes cover all sufficiently large
exponents. Since $K$ was arbitrary, we obtain that $\delta_W\circ F^n$ converges to $0$ locally uniformly on $\Omega$. Every map in $\cH_\Omega(F)$ therefore has image in $W$.

Now let $q\in W$ be a non-superattracting periodic point, and
let $M_q$ be its smooth local core. Choose a multiple $k$ of $a$
such that $f^k$ fixes $q$ and preserves $M_q$. Then
$h=f^k\vert_{M_q}$ is a local automorphism and
$(f^k)^*\mathcal I_W\subset\mathcal I_W^2$.
Let $J=\mathcal I_W\cO_{M_q,q}$ be the ideal generated by the
restrictions of local equations of $W$ to $M_q$. Since $q\in W$,
we have $J\subset\mathfrak m_{M_q,q}$, and the pullback inclusion
gives $h^*J\subset J^2$.

If $J\ne0$, let $d\geq1$ be the least vanishing order of its
nonzero elements. A local automorphism preserves vanishing order,
so the least order in $h^*J$ is also $d$. Every element of $J^2$
has order at least $2d$, contradicting $h^*J\subset J^2$.
Thus $J=0$ and $M_q\subset(W,q)$. By
Corollary~\ref{cor:core-independence}, the algebraic core at $q$
is the Zariski closure of $M_q$, so it is contained in $W$.
\end{proof}

Let $q\in W$ be a non-superattracting periodic point whose
algebraic core equals $W$, and let $M_q$ be its smooth local
core. Choose $m\geq1$ such that $f^m(q)=q$, $f^m(W)=W$, and
$f^m(M_q)=M_q$.

\begin{lem}\label{lem:branches}
The germ $M_q$ is the only periodic irreducible local branch of
$W$ at $q$ under $f^m$. Every local branch eventually maps to
$M_q$ under $f^m$.
\end{lem}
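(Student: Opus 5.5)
Since $f^m$ fixes $q$ and preserves $W$, it permutes the finitely many irreducible local branches of the germ $(W,q)$. Hence every branch is preperiodic, and the first assertion implies the second: the forward orbit of any branch eventually enters a cycle, that cycle consists of periodic branches, so it is $\{M_q\}$. It therefore suffices to show that a periodic branch $M'$ equals $M_q$. I will replace $m$ by a multiple $m'$ with $f^{m'}(M')=M'$, and also with $(f^{m'})^*\mathcal I_W\subset\mathcal I_W^2$ (Corollary~\ref{cor:ideal-square}) and $d(f^{m'})_q$ vanishing on the generalized zero-eigenspace. By Corollary~\ref{cor:core-independence}, $M_q$ and $W$ do not change when we pass to this iterate.

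I will work on the normalization $\nu:X\to W$ with induced map $g$. The branch $M'$ corresponds to a point $q'\in\nu^{-1}(q)$ fixed by $g^{m'}$. The key is to compare the germ $(X,q')$ with $M_q$ through its tangent cone. Let $C'\subset T_q\bP^N$ be the tangent cone of $M'$. It has dimension $r$ and is invariant under $L:=d(f^{m'})_q$. Since $L$ is nilpotent on the zero-eigenspace $E_0$ and invertible on $E=T_qM_q$, every $L$-invariant closed cone $C'$ satisfies $L^k(C')\subset E$ for large $k$, and $L^k(C')=C'$ by invariance. Hence $C'\subset E$. As both have dimension $r$ and $E$ is irreducible, $C'=E$. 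So $M'$ is a branch tangent to $E$ with reduced tangent cone $E$. Its projection to $E$ along $E_0$ is then finite, of some degree $e$.

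Now apply the contraction argument of Lemma~\ref{lem:invariant-series} to $M'$. Its normalization germ $(X,q')$ maps finitely onto $(E,0)$ by the projection $x$, and $g^{m'}$ on it covers, up to higher order, the invertible germ $x\mapsto Ax+\dots$. I will try to show that $\nu$ is an immersion at $q'$, i.e.\ that $e=1$. The Puiseux-type coordinates $y$ on $M'$ satisfy $y\circ F=G_2(x,y)$, which has order at least $2$ in $(x,y)$, while $x\circ F$ is invertible in linear order. If $M'$ were singular or multi-sheeted over $E$, the function $y-\widehat\varphi(x)$ restricted to $(X,q')$ would be nonzero of some finite order $d$ with respect to $\mathfrak m_{X,q'}$. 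Its pullback by $g^{m'}$ lies in $\mathfrak m\cdot I$ by \eqref{eq:core-contraction}, so its order strictly increases. But $g^{m'}$ is a finite germ covering an automorphism of $(E,0)$, and the order of functions of the form $u(x)$ is preserved. Making this comparison rigorous requires a valuation along a generic arc: take an arc $\gamma$ in $(X,q')$ and lift it through $g^{m'n}$ using Lemma~\ref{lem:fixed-inverses}-style inverse arcs (Lemma~\ref{lem:arc-transfer}), staying at $q'$ because $g$ fixes $q'$. Then the proof of Lemma~\ref{lem:inverse-arc-rank} forces $e(\gamma)=0$ for all $e\in I$. Hence every arc of $M'$ lies on the formal graph $\widehat\varphi$, so $M'\subset M_q$, and the two are equal by dimension.

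The main obstacle is obtaining the inverse lifts centered at $q'$ through all iterates. This is the step where Lemma~\ref{lem:core-inverse-family} used invertibility of $\widehat h$ at a smooth point. At $q'$ we do not yet know that $X$ is smooth or that $g^{m'}$ is locally invertible. I would first try to show that $g^{m'}$ is unramified at $q'$ by a degree count: the local degree of $g^{m'}$ at $q'$ equals the degree of the induced map on the tangent cone $E$ times $e$ divided by $e$, and that map is linear invertible, so the local degree is $1$. Then $q'$ is not in the branch locus, and Ueda's normality (Lemma~\ref{lem:fixed-inverses}, using normality of $X$) supplies the inverses.
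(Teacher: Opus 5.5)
Your reduction to periodic branches matches the paper ($f^m$ maps the finite set of branches to itself, though it need not permute it; preperiodicity is all you use). The argument breaks at the tangent-cone step. Invariance $f^{m'}(M')=M'$ only gives $L(C')\subset C'$ for $L=d(f^{m'})_q$. It does not give $L^k(C')=C'$, because the differential of a finite self-map of an irreducible germ can collapse its tangent cone, as $t\mapsto t^2$ does.

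Concretely, take the local model $F(x,y)=(\lambda x,y^2)$ with $\lvert\lambda\rvert>1$ near $q=0$, and $W=\{xy=0\}$. Here $E$ is the $x$-axis and $E_0$ the $y$-axis. Also $\widehat\varphi=0$, $M_q$ is the $x$-axis, and \eqref{eq:core-contraction} holds since $y\circ F=y^2\in\fm\,(y)$. Yet the $y$-axis is a fixed irreducible branch with tangent cone $E_0\not\subset E$, on which $F$ acts by $y\mapsto y^2$.

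So no local linear-algebra argument can yield $C'=E$. Everything downstream in your plan rests on this unproved claim: finiteness of the projection $M'\to E$, the degree count making the induced map locally biholomorphic at $q'$, and the order or arc argument with $y-\widehat\varphi(x)$. Granting $C'=E$, those later steps could be made rigorous. The bound $\lvert y\rvert\le C\lvert x\rvert$ on $M'$ gives a homotopy argument for local degree one. Then \eqref{eq:core-contraction} strictly raises the minimal order of the pullbacks of $y_i-\widehat\varphi_i(x)$ to $(X,q')$, while a local automorphism preserves it.

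What rules out the model is global. Along the $y$-axis $dF$ has rank $2>\dim W$, contradicting the conclusion of Theorem~\ref{thm:transverse} at general points of that branch. Also $F^*(xy)=\lambda xy^2\notin(x^2y^2)$, so the inclusion $(f^{m'})^*\mathcal I_W\subset\mathcal I_W^2$ fails. You imposed that inclusion on $m'$ but never used it, and this is the missing idea.

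The paper's proof uses exactly this input and avoids tangent cones and the normalization altogether:
\begin{enumerate}
\item Take $Q=F^a$ preserving the periodic branch $L$, with $dQ$ of rank $\dim W$ at general points of $W$, hence at general smooth points $x$ of $L$.
\item There $\operatorname{im}d_xQ=T_{Q(x)}L$, so $Q^*$ sends local equations of $L$ into the square of the ideal of $L$.
\item Iterating, for any $u$ vanishing on $L$, the function $u\circ Q^n$ has transverse order at least $2^n$ on a dense subset of $L$. Hence $\operatorname{ord}_q(u\circ Q^n)\geq 2^n$.
\item Restrict to $M_q$, where $Q$ is a local automorphism and so preserves orders. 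This forces $u\vert_{M_q}=0$.
\end{enumerate}
Thus $M_q\subset L$, and the two irreducible germs of dimension $\dim W$ coincide.
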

\begin{proof}
Set $F=f^m$, and let $L$ be a periodic irreducible local branch
of $W$ at $q$. By Theorem~\ref{thm:transverse}, we may choose
$a\geq1$ so that $Q=F^a$ preserves $L$ and its differential has
generic rank $\dim W$ along $W$. Indeed, take $a$ to be a
sufficiently large common multiple of the period of $L$ and the
exponent supplied by that theorem.

At a general smooth point $x$ of $L$ with smooth image, the
restriction $Q\vert_L$ has rank $\dim W$, since it is finite
and dominant. Hence $\operatorname{im}d_xQ=T_{Q(x)}L$.
In coordinates in which $L$ is a coordinate subspace at $x$ and
at $Q(x)$, this means that the pullback of each local equation of
$L$ belongs to the square of the ideal of $L$ at $x$.

Let $u$ be a holomorphic function germ at $q$ vanishing on $L$,
and fix $n\geq1$. Choose representatives near $q$ on which
$u\circ Q^n$ and the intermediate iterates are defined. Exclude
from $L$ the finitely many proper analytic subsets where one of
the points $x,Q(x),\ldots,Q^n(x)$ is singular on $L$ or where
the differential of $Q$ fails to have rank $\dim W$ at one of
the first $n$ points. Composing the local ideal inclusions along
these $n$ successive images shows that $u\circ Q^n$ has
transverse order at least $2^n$ at every remaining point of $L$.

All ambient partial derivatives of $u\circ Q^n$ of total order
less than $2^n$ therefore vanish at these points. Their
restrictions to the irreducible germ $L$ vanish identically, so
they also vanish at $q$. It follows that
\[
\operatorname{ord}_q(u\circ Q^n)\geq2^n,
\qquad
\operatorname{ord}_q\bigl((u\circ Q^n)\vert_{M_q}\bigr)
\geq2^n.
\]
The restriction $Q\vert_{M_q}$ is a local automorphism. If
$u\vert_{M_q}\ne0$, composition with its iterates preserves
the finite order of $u\vert_{M_q}$, contradicting these
inequalities as $n\to\infty$. Thus every local equation of $L$
vanishes on $M_q$, and $M_q\subset L$. Both germs are
irreducible of dimension $\dim W$, so $L=M_q$.

Since $F$ is finite, it maps each irreducible local branch of
$W$ at $q$ onto a local branch of the same dimension. Every orbit
in this finite set eventually enters a periodic orbit. Its only
periodic branch is $M_q$. Thus every branch eventually maps to
$M_q$.
\end{proof}

\section{Convergence to superattracting cycles}\label{sec:convergence}
We use the notations from previous sections. As before $f$ is a PCF endomorphism of $\bP^N$, and $\Omega$ is a Fatou component. 
Recall that $\cH_\Omega$ is defined in Subsection~\ref{subsec:limits-iterates}. 
\begin{prop}\label{prop:superattracting-limit}
For every Fatou component $\Omega$, the family $\cH_\Omega$ contains a constant map whose value is a superattracting periodic point.
\end{prop}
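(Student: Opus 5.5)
The plan is to argue by contradiction, choosing the constant limit so that its algebraic core is as small as possible. By Theorem~\ref{thm:periodic-constant}, $\cH_\Omega$ contains constant maps whose values are periodic points lying in $E_\pi$. Among all such constant periodic limits, over all positive iterates $f^b$ and the families $\cH_\Omega(f^b)$, choose one, with value $p$, for which $r=\dim W_p$ is minimal. Here $W_p$ is the algebraic core of Theorem~\ref{thm:algebraic-core}, with the convention $r=0$ exactly when $p$ is superattracting. If $r=0$ we are done, so assume $p$ is not superattracting. Then $r\geq1$, and by Proposition~\ref{prop:replling-direction} the first-return map at $p$ has an eigenvalue of modulus $>1$.

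Let $W=W_p$, pass to an iterate $F=f^b$ fixing $p$ and preserving $W$, and apply Corollary~\ref{cor:capture}. It gives two things:
\begin{itemize}
\item every map in $\cH_\Omega(F)$ has image in $W$;
\item every non-superattracting periodic point $q\in W$ has algebraic core contained in $W$.
\end{itemize}
Next rerun Theorem~\ref{thm:periodic-constant} inside $\cH_\Omega(F)$. Every constant periodic limit $q$ then lies in $W$. By minimality of $r$, either $q$ is superattracting, and we are done, or its core has dimension $\geq r$ and is contained in the irreducible $r$-dimensional $W$, so it equals $W$. It therefore remains to rule out the following configuration: all constant periodic limits $q$ are non-superattracting points of $W$ with core $W$, and Lemma~\ref{lem:branches} applies at each of them.

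To get the contradiction, argue as in the proof of Corollary~\ref{cor:pcf-P1}. Since the constant limits form a finite periodic set, after replacing $F$ by an iterate, orbits of a compact set $K\subset\Omega$ eventually stay in a small neighborhood $V$ of a single fixed point $q$, and $F^n\to q$ on $\Omega$. By \eqref{eq:ideal-norm-contraction}, the orbit approaches $W$ superexponentially. By Lemma~\ref{lem:branches}, after finitely many steps it approaches the smooth branch $M_q$, since all other local branches eventually map into $M_q$. Let $\mathrm{pr}$ be a holomorphic retraction of a neighborhood onto $M_q$. The projected points $y_n=\mathrm{pr}(F^n(z))$ are then $\epsilon_n$-pseudo-orbits of $h=F|_{M_q}$, with $\epsilon_n$ decaying doubly exponentially, and they remain in $V\cap M_q$ for all $n\geq n_0$.

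The main obstacle is to contradict this using the expanding multiplier. Along the repelling direction, $h$ has an eigenvalue of modulus $>1$, but there may also be neutral directions, so the naive one-dimensional argument does not apply directly. What I would try first is to mimic the proof of Proposition~\ref{prop:replling-direction}. Using the normal inverse branches $b_n$ of Lemma~\ref{lem:fixed-inverses}, together with Lemma~\ref{lem:inverse-normality} applied to discs in $F^{n_0}(\Omega)$, the forward iterates $F^{n}$ restricted to a shrinking neighborhood would have to form an equicontinuous family. Combined with $\lvert\det d_qh\rvert\geq1$ and a subsequence $n_j$, this should produce returns $F^{l_j}\to\Id$ on an open subset of $W_{\rm reg}$, contradicting Lemma~\ref{lem:no-identity-returns}. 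Alternatively, one can use the repelling eigenvector directly: the open set $F^{n_0}(\Omega)$ has image under the projection that contains points off the local stable set of $q$ in $M_q$. Their orbits must then leave $V$ at a geometric rate, which the doubly exponentially small errors $\epsilon_n$ cannot compensate. This contradicts $F^n\to q$, so $r=0$ and the constant limit is superattracting.
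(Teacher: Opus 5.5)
Your reduction is sound up to the configuration in which every constant value of $\cH_\Omega(F)$ is a non-superattracting point of $W$ whose core is $W$. Choosing $p$ with $\dim W_p$ minimal is a legitimate substitute for the paper's symmetric use of Corollary~\ref{cor:capture}: for two constant values $p,q$ one gets $W_q\subset W_p$ and $W_p\subset W_q$.

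The genuine gap is your next step: ``after replacing $F$ by an iterate, orbits of a compact set $K\subset\Omega$ eventually stay in a small neighborhood $V$ of a single fixed point $q$, and $F^n\to q$ on $\Omega$.'' In Corollary~\ref{cor:pcf-P1} this held because \emph{every} Fatou limit point lies in a finite set. In $\bP^N$ you have not excluded non-constant maps in $\cH_\Omega(F)$. Corollary~\ref{cor:capture} only confines their images to the positive-dimensional variety $W$. So accumulation points of $F^n(z_0)$ can be non-periodic points of $W$, and the orbit need not settle near any fixed point. Showing that $\cH_\Omega(F)$ consists of constants is the heart of the paper's proof, and your proposal has no substitute for it.

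The paper's argument runs as follows.
\begin{enumerate}
\item For $H\in\cH_\Omega(F)$, Lemma~\ref{lem:rank-drop} and $F|_C=\Id$ give $F^n\circ H\to q\in C$.
\item Since $H$ takes values \emph{in} $W$, Lemma~\ref{lem:branches} yields $n_0$ such that $H_n:=F^{n_0+n}\circ H$ maps a relatively compact $\Omega_0\Subset\Omega$ into the smooth piece $U\subset V$ representing $M_q$, for all $n\geq0$.
\item Injectivity of $F|_V$ and $F\circ b_{n+1}=b_n$ give $b_j\circ H_n=H_{n-j}$.
\item Hence $H_0=b_n\circ H_n\to q$ by equicontinuity of the $b_n$ at $q$ (Lemma~\ref{lem:fixed-inverses}). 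So $H_0\equiv q$ and $H$ is constant.
\end{enumerate}
Thus the inverse branches $b_n$ and Lemma~\ref{lem:branches} are needed to kill non-constant limits, not to exploit the expanding multiplier.

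Once $F^n|_\Omega\to p$ is established, the step you call the main obstacle disappears. Apply the center-stable manifold theorem for non-invertible maps to $F$ at $p$ in $\bP^N$ itself, using the eigenvalue of $d_pF$ of modulus $>1$ from Proposition~\ref{prop:replling-direction}. Every point whose forward orbit stays in a small neighborhood $O$ of $p$ lies in a real $C^1$ submanifold $S$ of positive codimension. But $F^{n_1}(\Omega_1)$ is open and has this property, a contradiction. No superexponential approach to $W$, retraction onto $M_q$, or pseudo-orbit estimate is required.

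As written, your two endings are only sketches anyway. The first does not explain how returns $F^{l_j}\to\Id$ on an open subset of $W_{\rm reg}$ would arise. The second needs a quantitative stability of the center-stable set under pseudo-orbits, and you do not supply it. The difficulty is that the pseudo-orbit errors depend on the same $n_0$ that determines the starting point.
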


\begin{proof}
Let $C$ be the set of constant values in $\cH_\Omega$.
By Proposition~\ref{prop:log-rank} and
Theorem~\ref{thm:periodic-constant}, $C$ is finite and nonempty.
If $f^{n_j}\vert_\Omega\to q\in C$, a limit $H$ of
$f^{n_j-1}\vert_\Omega$ satisfies $f\circ H=q$.
Since $f$ is finite and $\Omega$ is connected, $H$ is constant.
Thus $f(C)=C$, and every point of $C$ is periodic.

Suppose for contradiction that no point of $C$ is superattracting. For $q\in C$, let $W_q$ and $M_q$ be its algebraic core and smooth core germ constructed in Theorem~\ref{thm:algebraic-core}. By Theorem~\ref{thm:algebraic-core} and Corollary~\ref{cor:core-independence}, there is a common iterate $F=f^m$ fixing $C$, preserving every $W_q$, and inducing an automorphism of every $M_q$. Recall that the family $\cH_\Omega(F)$ contains a constant map by Theorem~\ref{thm:periodic-constant}. For any two such constant values $p,q$, Corollary~\ref{cor:capture} says that $W_p=W_q$. We will denote this common algebraic core by $W$. Corollary~\ref{cor:capture} also says that every map in $\cH_\Omega(F)$ has image in $W$. 

Let $H\in\cH_\Omega(F)$. By Lemma~\ref{lem:rank-drop}, every accumulation point of $(F^n\circ H)$ is a constant map. Such a constant map has value in $C$ whose points are all fixed by $F$. Therefore $(F^n\circ H)$ converges locally uniformly to some $q\in C$.

Let $\nu:X\to W$ be the normalization, and let $g:X\to X$
be the map induced by $F$. By Theorem~\ref{thm:algebraic-core}, there is a smooth fixed point $\widetilde q$ of $g$ with $\nu(\widetilde q)=q$, and $g$ is locally biholomorphic at $\widetilde q$. There is a neighborhood $\widetilde V$ of $\widetilde q$ such that $\nu\vert_{\widetilde V}$ is a biholomorphism onto a local complex submanifold $V\subset\bP^N$ contained in $W$ whose germ at $q$ is $M_q$. Since $F$ induces an automorphism of $M_q$, the restriction $F\vert_V$ is injective when $V$ is sufficiently small. We apply Lemma~\ref{lem:fixed-inverses} to $g$ at $\widetilde q$, and obtain a neighborhood $U\subset V$ of $q$ and holomorphic maps $b_n:U\to V$, $n\geq 0$, equicontinuous at $q$, satisfying 
\begin{equation}\label{eq:limit-core-inverses}
b_0=\Id_U,\qquad b_n(q)=q,\qquad F\circ b_{n+1}=b_n.
\end{equation}

By Lemma~\ref{lem:branches}, some iterate $F^a$ maps an analytic neighborhood of $q$ in $W$ into $U$. Since $F^n\circ H\to q$, for any nonempty connected open subset $\Omega_0\Subset\Omega$ there is $n_0\geq 0$ such that $H_n:=F^{n_0+n}\circ H$ maps $\Omega_0$ into $U$ for every $n\geq 0$. By injectivity of $F$ on $V$ and \eqref{eq:limit-core-inverses}, we obtain by induction $b_j\circ H_n=H_{n-j}$ on $\Omega_0$ for $0\leq j\leq n$. Remark that $b_n\circ H_n\to q$ because of the equicontinuity of $(b_n)$ at $q=b_n(q)$. Thus $H_0$ is the constant map $q$. Since $F^{n_0}\circ H=H_0$ and $F^{n_0}$ is finite, $H$ is constant on $\Omega_0$, and hence on $\Omega$. Every map in $\cH_\Omega(F)$ is therefore constant.

We claim now that $F^n\vert_\Omega\to p$ locally uniformly for some $p\in C$. Indeed, the accumulation points of $(F^n\vert_\Omega)$ are in $C$ and each point of $C$ is fixed by $F$. The claim then follows from normality of $(F^n)$.

As $p$ is not superattracting by our absurd hypothesis, Proposition~\ref{prop:replling-direction}, applied to $F$, implies that $d_pF$ has an eigenvalue of modulus greater than one. 
By the local center-stable manifold theorem for non locally invertible maps \cite[Theorem~III.7(2) and Exercise~III.2]{Shub1987},
\cite[Theorem~1.2.1 and \S3.3]{Chaperon2004}, there are a neighborhood $O$ of $p$ and a real $C^1$ submanifold $S$ of positive codimension such that $z\in S$ whenever $F^n(z)\in O$ for every $n\geq 0$. Let $\Omega_1\Subset\Omega$ be a nonempty connected relatively compact subset. Since $F^n\vert_\Omega\to p$ locally uniformly, there is $n_1\geq 0$ such that $F^{n_1+n}(\Omega_1)\subset O$ for every $n\geq0$. Thus $F^{n_1}(\Omega_1)\subset S$. Since $F^{n_1}$ is open and $S$ has positive codimension, we obtain a contradiction.
\end{proof}

\begin{proof}[Proof of Theorem~\ref{mainthm:Fatou}]
Let $\Omega$ be a Fatou component.
By Proposition~\ref{prop:superattracting-limit}, there is a sequence $m_j\to\infty$ such that $f^{m_j}\vert_\Omega\to p$ for a superattracting periodic point $p$, so $\Omega$ is contained in the basin of the cycle of $p$. Since immediate basins are Fatou components and $f$ maps Fatou components onto Fatou components, some forward image of $\Omega$ equals the immediate basin of $p$.
\end{proof}

\appendix
\section{Some lemmas in several complex variables}\label{several complex variables}

\begin{lem}\label{lem:finite-normality}
Let $\phi:X\to Y$ be a finite map of complex spaces, let
$Z$ be a complex manifold, and let $(f_j)$ be a family of holomorphic maps from $Z$ to $X$. Then $(f_j)$ is normal if $(\phi\circ f_j)$ is normal.
\end{lem}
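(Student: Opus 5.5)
The plan is to reduce normality of $(f_j)$ to a local statement near each point of $Z$, and to use finiteness of $\phi$ to turn uniform convergence of $\phi\circ f_j$ into uniform convergence of $f_j$ itself.

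Let $(f_j)$ be a sequence from the family. Normality is a local property, and $Z$ is second countable, so by a diagonal argument it suffices to work near a point $z_0\in Z$. Passing to a subsequence, we may assume $\phi\circ f_j\to g$ locally uniformly. We will show that a further subsequence of $(f_j)$ converges locally uniformly near $z_0$, in the sense of normal families into $X$, allowing compact divergence only if $X$ is noncompact. Set $y_0=g(z_0)$. Since $\phi$ is finite, $\phi^{-1}(y_0)=\{x_1,\ldots,x_k\}$ is finite. Properness of $\phi$ gives the following. For any pairwise disjoint neighborhoods $V_i\ni x_i$, there is a neighborhood $W\ni y_0$ with $\phi^{-1}(W)\subset\bigcup_i V_i$ and $\phi^{-1}(\overline W)$ compact. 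Choose a connected neighborhood $B\ni z_0$ with $g(\overline B)\subset W$. Then $\phi\circ f_j(B)\subset W$ for large $j$. Since $B$ is connected and the $V_i$ are disjoint, each $f_j(B)$ lies in a single $V_{i(j)}$. Passing to a subsequence, $i(j)=i$ is constant, and all $f_j(B)$ lie in the compact set $\phi^{-1}(\overline W)\cap\overline{V_i}$. So no escape to infinity occurs, which settles the noncompact case.

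Next we embed locally. Shrinking $V_i$, we may take $V_i$ to be a local model $V_i\hookrightarrow\bD^M\subset\bC^M$. The maps $f_j\vert_B$ then become uniformly bounded holomorphic maps into $\bC^M$. By Montel's theorem, a subsequence converges locally uniformly on $B$ to a holomorphic map $f:B\to\bC^M$. Its image lies in the closed analytic set $\overline{V_i}\cap\phi^{-1}(\overline W)$, or more precisely in the closure of the model of $V_i$, which is closed in $\bD^M$ after shrinking. So $f$ is a holomorphic map into $X$, and $f_j\to f$ locally uniformly in $X$. The final diagonal argument over a countable cover of $Z$ by such balls $B$ gives a subsequence converging locally uniformly on all of $Z$.

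The main obstacle is the uniform boundedness step: it must hold after $\phi$ forgets which preimage branch we are on. This is exactly where finiteness, through properness and finite fibers, is used, and connectedness of $B$ pins down a single branch. A secondary technical point is that $X$ may be singular, so normality must be understood via local embeddings. Limits stay in $X$ because $X$ is locally closed in the ambient space. Neither point should need more than these standard facts.
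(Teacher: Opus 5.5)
Your proposal is correct and follows essentially the same route as the paper. You pass to a subsequence with $\phi\circ f_j\to g$, use properness and finiteness of fibers to trap $f_j$ of a small connected neighborhood in one of finitely many disjoint neighborhoods of $\phi^{-1}(g(z))$, apply Montel in a local embedding, and finish with a diagonal extraction.

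One phrase is loose. The set $\overline{V_i}\cap\phi^{-1}(\overline W)$ is compact, but it is not in general a ``closed analytic set''. The paper avoids this by taking $V_i\Subset W_i$ with $W_i$ embedded, so the limit automatically stays in $X$. Alternatively, you can shrink $W$ so that $\phi^{-1}(\overline W)\cap V_i$ is a compact subset of $V_i$.
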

\begin{proof}
Assume that $(\phi\circ f_j)$ form a normal family. Up to taking a subsequence we may assume that $\phi\circ f_j$ converges locally uniformly to a holomorphic map $g:Z\to Y$.

Fix $z\in Z$. We can find pairwise disjoint neighborhoods $V_i\Subset W_i$ of the points of $\phi^{-1}(g(z))$, with each $W_i$ admitting a
holomorphic embedding into a complex Euclidean space. By properness,
$\phi^{-1}(W)\subset\bigcup_i V_i$ for some neighborhood $W$ of
$g(z)$. Thus, on a sufficiently small connected neighborhood $D$ of
$z$, all sufficiently large $f_j$ take values in $\bigcup_i V_i$.
Up to passing to a subsequence we have that $f_j(D)\subset V_i$ for one fixed $i$. By Montel's theorem we obtain a locally uniform holomorphic limit. We conclude by a diagonal extraction over a countable cover of $Z$.
\end{proof}

\begin{lem}\label{lem:restriction-degree}
Let $f:\bP^N\to\bP^N$ be an endomorphism of algebraic degree
$d_f\geq 1$, where $N\geq 1$. Let $X$ be an irreducible complex
projective variety of dimension $m\geq 1$, and let $\phi:X\to\bP^N$ be a holomorphic map which is finite onto its image. Suppose that a holomorphic map $h:X\to X$ satisfies $\phi\circ h=f^r\circ \phi$ for an integer $r\geq 1$. Let $L=\phi^*\cO_{\bP^N}(1)$ be the pullback of the hyperplane line bundle.
Then $L$ is ample, $h$ is finite and surjective, and \begin{equation}\label{eq:restriction-degree}
h^*L\simeq L^{\otimes d_f^r},\qquad \deg h=d_f^{rm}.
\end{equation}
\end{lem}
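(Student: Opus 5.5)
The argument proceeds in three stages: ampleness of $L$, finiteness and surjectivity of $h$, then the linear-equivalence relation and the degree via intersection numbers.

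\emph{Ampleness of $L$.} Since $\phi$ is finite onto its image $\phi(X)\subset\bP^N$, the pullback of an ample bundle under a finite morphism is ample. Hence $L=\phi^*\cO_{\bP^N}(1)$ is ample on $X$, and $L^m=\deg(\phi\colon X\to\phi(X))\cdot\deg\phi(X)>0$.

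\emph{Finiteness and surjectivity of $h$.} Suppose some curve $C\subset X$ were contracted by $h$. Then $\phi(h(C))$ would be a point. On the other hand, $\phi(h(C))=f^r(\phi(C))$, and $\phi(C)$ is a curve because $\phi$ is finite. The endomorphism $f^r$ is finite, so it cannot contract a curve; this is a contradiction. Hence $h$ is quasi-finite. Since $X$ is projective, $h$ is proper, so $h$ is finite. Finiteness gives $\dim h(X)=m$, and $X$ is irreducible, so $h(X)=X$.

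\emph{The relation $h^*L\simeq L^{\otimes d_f^r}$ and the degree.} Since $f^r$ has algebraic degree $d_f^r$, we have $(f^r)^*\cO(1)\simeq\cO(d_f^r)$. Pulling back along $\phi$ and using $\phi\circ h=f^r\circ\phi$ gives
\[
h^*L=h^*\phi^*\cO(1)\simeq\phi^*(f^r)^*\cO(1)\simeq\phi^*\cO(d_f^r)=L^{\otimes d_f^r}.
\]
For the degree, apply the projection formula for the finite surjective map $h$ to the top self-intersection:
\[
(h^*L)^m=\deg h\cdot L^m .
\]
The left side equals $d_f^{rm}L^m$ by the relation just proved. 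Since $L^m>0$ by ampleness, we obtain $\deg h=d_f^{rm}$.

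The only delicate point is finiteness of $h$, which reduces to the fact that $f^r$ contracts no curves. Everything else is standard intersection theory on projective varieties; in particular, the projection formula holds for singular irreducible $X$ because $h$ is finite and surjective between varieties of the same dimension.
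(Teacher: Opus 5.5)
Your proof is correct and follows the paper's argument. You pull back $\cO(1)$ through $\phi\circ h=f^r\circ\phi$, get ampleness of $L$ from finiteness of $\phi$, and compare top self-intersections via the projection formula; your curve-contraction argument also justifies the finiteness of $h$, which the paper simply asserts.
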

\begin{proof}
The map $h$ is finite. Its image is a closed subvariety of dimension $m$, so $h(X)=X$ because $X$ is irreducible. We have
\[
h^*L \simeq(\phi\circ h)^*\cO_{\bP^N}(1)\simeq
\phi^*(f^r)^*\cO_{\bP^N}(1)\simeq L^{\otimes d_f^r}.
\]
Since $\phi:X\to \phi(X)$ is finite, the pullback $L$ is ample. In particular, $L^m$ is a positive scalar. By the projection formula we have
\[
(\deg h)L^m=(h^*L)^m=d_f^{rm}L^m.
\]
Dividing by $L^m$ we finish the proof.
\end{proof}

Let $B=\bD^k\times\bD^l$ have coordinates $(z_1,\ldots,z_k,t_1,\ldots,t_l)$. Let $D\subset B$ be the subset defined by $z_1\cdots z_k=0$, and let $T\subset$ be the subset defined by $z_1=\cdots=z_k=0$. Let $\rho:Z\to B$ be a finite surjective holomorphic map, where $Z$ is normal, every irreducible component of $Z$ dominates $B$, and $\rho$ is unramified over $B\setminus D$.

\begin{lem}\label{lem:local-power-coordinates}
The inverse image $\rho^{-1}(T)_{\rm red}$ is a finite disjoint union of complex spaces, each mapped isomorphically onto $T$ by $\rho$.
\end{lem}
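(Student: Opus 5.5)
The plan is to reduce to an explicit model by working one connected component of $Z$ at a time, using the fact that the fundamental group of $B\setminus D$ is abelian.

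First, since $Z$ is normal it is locally irreducible, so its connected components coincide with its irreducible components. There are finitely many of them, because $\rho$ is finite. It therefore suffices to treat a single connected component $Z_0$ and to show that $\rho^{-1}(T)_{\rm red}\cap Z_0$ is mapped isomorphically onto $T$; the disjoint union over the components then gives the statement.

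Second, since $Z_0$ dominates $B$ and $\rho$ is finite, $\rho^{-1}(D)\cap Z_0$ is a nowhere dense analytic subset. The set $Z_0^*:=Z_0\setminus\rho^{-1}(D)$ is connected because $Z_0$ is normal and irreducible. The restriction $\rho\colon Z_0^*\to B\setminus D$ is then a connected finite unramified covering of
\[
B\setminus D=(\bD^*)^k\times\bD^l .
\]
The fundamental group of this space is $\bZ^k$, so the covering corresponds to a subgroup $\Gamma\subset\bZ^k$ of finite index. Choose $n$ with $n\bZ^k\subset\Gamma$. The covering $(\bD^*)^k\times\bD^l\to B\setminus D$, $(w,t)\mapsto(w_1^n,\ldots,w_k^n,t)$, is Galois with group $\mu_n^k$, and our covering is its quotient by the finite abelian group $G\subset\mu_n^k$ corresponding to $\Gamma/n\bZ^k$.

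Third, extend this across $D$. The map $\bD^k\times\bD^l\to B$ given by the same formula extends the $\mu_n^k$-covering finitely. The quotient $(\bD^k/G)\times\bD^l$ is normal and finite over $B$, and it extends the covering $Z_0^*\to B\setminus D$. By the uniqueness of normal finite extensions of unramified coverings across a nowhere dense analytic subset (Grauert--Remmert), we get an isomorphism over $B$
\[
Z_0\cong(\bD^k/G)\times\bD^l .
\]
This extension step is the one I expect to need the most care: one must check that a normal space finite over $B$ is determined by its restriction over $B\setminus D$. I would prove it by viewing $Z_0$ as the normalization of $B$ in the finite covering, and using that bounded holomorphic functions on $Z_0^*$ extend to the normal space $Z_0$ (Riemann extension theorem).

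Finally, in this model the preimage of $T=\{z=0\}$ under $(w,t)\mapsto(w^n,t)$ is $\{w=0\}\times\bD^l$. The origin of $\bD^k$ is fixed by $G$, so in the quotient the reduced preimage is $\{[0]\}\times\bD^l$. It is mapped by $\rho$ bijectively and biholomorphically onto $T$, since the second factor is the identity in $t$. This proves the claim for $Z_0$, and hence the lemma.
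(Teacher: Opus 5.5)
Your proof is correct and follows essentially the same route as the paper: reduce to a single component, use $\pi_1(B\setminus D)\simeq\bZ^k$ to dominate the covering by the power map $p(z,t)=(z^m,t)$, extend across $D$ with the Riemann extension theorem for bounded coordinate functions, and conclude from $p^{-1}(T)=T$ and $p\vert_T=\Id_T$. The only difference is packaging: the paper extends the covering map to $h\colon B\to Z$ with $\rho\circ h=p$ and uses the properness and surjectivity of $h$ to show that $h\vert_T$ inverts $\rho$ on $\rho^{-1}(T)$, whereas you identify $Z_0$ with the quotient $(\bD^k/G)\times\bD^l$ via uniqueness of normal finite extensions, which needs slightly more machinery (finite quotients and the Grauert--Remmert uniqueness) but gives an explicit model.
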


\begin{proof}
Since $Z$ is normal, its irreducible components are disjoint. Each component maps surjectively onto $B$ and therefore intersects every fiber of $\rho$. It thus suffices to prove the assertion when $Z$ is irreducible. 

The restriction of $\rho$ to ${\rho^{-1}(B\setminus D)}$ is a
connected finite unramified covering of $B\setminus D$. Its corresponding subgroup of $\pi_1(B\setminus D)\simeq\bZ^k$ has finite index. Hence there is an integer $m\geq1$ such that this subgroup
contains $m\bZ^k$. 
Let $p:B\to B$ be the finite map defined by
\begin{equation}\label{eq:local-power-map}
p(z_1,\ldots,z_k,t_1,\ldots,t_l) =(z_1^m,\ldots,z_k^m,t_1,\ldots,t_l).
\end{equation}
The restriction $p\vert_{B\setminus D}$ is a covering and corresponds to the subgroup $m\bZ^k$. Therefore we have a surjective holomorphic map $h:B\setminus D\to\rho^{-1}(B\setminus D)$ satisfying $\rho\circ h=p\vert_{B\setminus D}$.

Let $b\in B$. Since $\rho$ is finite, there is a neighborhood $U$ of $b$ such that $\rho^{-1}(U)$ admits a closed holomorphic embedding into $U\times\bC^a$ over $U$, for some integer $a\geq 1$. Let $U'\Subset U$ be a neighborhood of $b$. By properness of $\rho$, the additional coordinate functions of this embedding are bounded on
$\rho^{-1}(\overline{U'})$. Their compositions with $h$ are therefore bounded holomorphic functions on $p^{-1}(U')\setminus D$. By the removable singularity theorem, they extend holomorphically to $p^{-1}(U')$. Since the embedded space is closed, the extended map still takes values in $\rho^{-1}(U)$. Thus we obtain a holomorphic extension
$h:B\to Z$ satisfying $\rho\circ h=p$.

For every compact subset $K\subset Z$, the set $h^{-1}(K)$ is a closed subset of the compact set $p^{-1}(\rho(K))$. Hence $h$ is proper and its image is closed in $Z$. Since this image contains the dense subset $\rho^{-1}(B\setminus D)$, we deduce that $h$
is surjective.

By \eqref{eq:local-power-map}, we have $p^{-1}(T)=T$ and $p\vert_T=\Id_T$. Since $\rho\circ h=p$ and $h$ is surjective, we get $h(T)=\rho^{-1}(T)$ and $\rho\circ h\vert_T=\Id_T$.
For every $x\in\rho^{-1}(T)$, there is $y\in T$ such that $h(y)=x$. We then have $\rho(x)=y$, and hence $h(\rho(x))=x$. Thus $h\vert_T$ and the restriction of $\rho$ are inverse holomorphic maps between $T$ and $\rho^{-1}(T)$.
\end{proof}

Let $\xi\in Z$ satisfy $y:=\rho(\xi)\in T$. 
Let $\theta:\widehat{\cO}_{Z,\xi}\to\bC[[s]]$ be a continuous local homomorphism of $\bC$-algebras. Assume that $\theta(\widehat\rho^*z_i)\ne0$ for $1\leq i\leq k$,
and define $a_i=\operatorname{ord}_s \theta(\widehat\rho^*z_i)>0$.
Let $\Delta\subset\bC$ be a disk centered at $0$, and let $\gamma:\Delta\to B$ be a holomorphic map such that $\gamma(0)=y$, $\gamma(\Delta\setminus\{0\})\subset B\setminus D$, and $\operatorname{ord}_0(z_i\circ\gamma)=a_i$ for every $i$.

\begin{lem}\label{lem:arc-transfer}
The map $\gamma$ has a holomorphic lift through $\rho$
on all of $\Delta$, sending $0$ to $\xi$.
\end{lem}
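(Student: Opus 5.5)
The plan is to reduce to the local quotient picture already established in the proof of Lemma~\ref{lem:local-power-coordinates}, and then to show that the formal arc $\theta$ and the holomorphic arc $\gamma$ have the same local monodromy. That monodromy is the class of $(a_1,\ldots,a_k)$ in $\pi_1(B\setminus D)\simeq\bZ^k$.

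First, we may replace $Z$ by its connected (equivalently irreducible, since $Z$ is normal) component $Z_\xi$ containing $\xi$. Let $H_\xi\subset\bZ^k$ be the finite-index subgroup corresponding to the connected covering $\rho^{-1}(B\setminus D)\cap Z_\xi\to B\setminus D$. As in the proof of Lemma~\ref{lem:local-power-coordinates}, choose $m$ with $m\bZ^k\subset H_\xi$ and obtain the proper surjection $h:B\to Z_\xi$ with $\rho\circ h=p$, where $p$ is the power map \eqref{eq:local-power-map}. Then $h$ identifies $Z_\xi$ with the quotient of $B$ by the finite group $G=H_\xi/m\bZ^k\subset(\bZ/m)^k$. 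Here $G$ acts diagonally by $z_i\mapsto e^{2\pi i g_i/m}z_i$. By Lemma~\ref{lem:local-power-coordinates}, $\xi$ is the unique point of $Z_\xi$ over $y$.

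The key step, and the one I expect to be the main obstacle, is to show that $(a_1,\ldots,a_k)\in H_\xi$. I would extract this from $\theta$ as follows. The formal arc $\theta$ has generic point outside $\rho^{-1}(D)$, since $\theta(\widehat\rho^*z_i)\neq0$. After the base change $s=\sigma^e$ for suitable $e$, it lifts through the Galois quotient $h$ to a formal arc $\widehat w(\sigma)$ in $B$ centered at a point of $h^{-1}(\xi)$. Its coordinates satisfy $\widehat w_i^m=\theta(\widehat\rho^*z_i)(\sigma^e)$, so $m\operatorname{ord}\widehat w_i=ea_i$. The substitution $\sigma\mapsto e^{2\pi i/e}\sigma$ fixes $\theta(\cdot)(\sigma^e)$, so it carries $\widehat w$ to another lift of the same arc, namely $g\cdot\widehat w$ for some $g\in G$. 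Comparing leading coefficients shows that $g$ acts on $z_i$ by $e^{2\pi i a_i/m}$, that is, $g\equiv(a_1,\ldots,a_k)\bmod m$. Hence $(a_i)\in H_\xi$. Making this rigorous requires the formal lifting through a finite Galois quotient, which is essentially Puiseux/Abhyankar for the normal crossing cover. It also needs care if some $a_i$ is not divisible by $e/m$-type constants; the leading-term comparison should handle this uniformly.

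Given this, the lifting is a covering-space argument. On $\Delta\setminus\{0\}$, the loop around $0$ is sent by $\gamma$ to $(\operatorname{ord}_0(z_i\circ\gamma))_i=(a_i)\in H_\xi$. Since $\pi_1(\Delta\setminus\{0\})=\bZ$, the lifting criterion gives a lift $\widetilde\gamma:\Delta\setminus\{0\}\to Z_\xi$ with $\rho\circ\widetilde\gamma=\gamma$. Concretely, one may instead write $z_i\circ\gamma(\sigma^m)=\sigma^{ma_i}u_i(\sigma^m)$ with $u_i$ nonvanishing on the simply connected disk, take analytic $m$-th roots, and push down by $h$. The resulting map is invariant under $\sigma\mapsto e^{2\pi i/m}\sigma$ precisely because that rotation acts by the element $(a_i)\bmod m\in G$. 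Either way, $\widetilde\gamma$ is bounded near $0$ in a local embedding of $Z_\xi$, since $\rho$ is finite. It therefore extends holomorphically across $0$ by the removable singularity theorem, exactly as in the proof of Lemma~\ref{lem:local-power-coordinates}. Finally, $\widetilde\gamma(0)$ lies over $y$ in $Z_\xi$, so it equals $\xi$.
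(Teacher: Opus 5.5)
Your proposal is correct, but the key step is organized differently from the paper's. Both proofs reduce to the same fact: $(\epsilon^{a_1},\ldots,\epsilon^{a_k})$, with $\epsilon=\exp(2\pi\mathrm{i}/m)$, lies in the deck group $G$ of $h$. Once this is known, the covering-space lift on $\Delta\setminus\{0\}$ and the removable-singularity extension with value $\xi$ are identical in the two arguments.

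\textbf{Your route.} You obtain this membership by lifting the formal arc $\theta$ upstairs through $h$ after a base change $s=\sigma^e$. You then identify the Galois action $\sigma\mapsto e^{2\pi\mathrm{i}/e}\sigma$ with an element of $G$ by comparing leading coefficients. This needs two facts that you only sketch:
\begin{enumerate}
\item a formal lift exists, by lying-over together with the fact that a finite extension of $\bC((s))$ embeds in some $\bC((s^{1/e}))$;
\item two formal lifts whose generic points lie off $D$ differ by an element of $G$. This holds because $Z_\xi\simeq B/G$ by normality, and $G$ acts freely off $D$, so the punctured formal quotient is a $G$-torsor.
\end{enumerate}
Both facts are standard.

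\textbf{The paper's route.} The paper never leaves $Z$. Take any $c$ whose character is trivial on $G$. The monomial $\prod_i w_i^{c_i}$ descends to a holomorphic function $\varphi_c$ on $Z'$, since it is bounded and $Z'$ is normal. It satisfies $\varphi_c^m=\rho^*(\prod_i z_i^{c_i})$, so applying $\theta$ gives $m\mid\sum_i c_ia_i$. Finite abelian duality then yields $(\epsilon^{a_i})\in G$.

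\textbf{Comparison.} The paper's route avoids Puiseux series and torsor arguments, because it applies $\theta$ only to functions already defined at $\xi$. In fact, if you build your formal lift concretely (taking $m$-th roots of $\theta(\widehat\rho^*z_i)(\sigma^m)$ and correcting by an element of $(\mu_m)^k$), the compatibility you must check is exactly on these invariant monomials. Your route makes the slogan ``formal monodromy equals analytic monodromy'' literal. Your explicit construction, taking $m$-th roots of $z_i\circ\gamma(\sigma^m)$ and descending $h\circ w$ along $\sigma\mapsto\sigma^m$, also produces the holomorphic lift on all of $\Delta$ at once, with no separate extension step at $0$.
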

\begin{proof}
We continue the proof of Lemma~\ref{lem:local-power-coordinates}. Let $Z'$ be the connected component containing $\xi$. Let $m$ and $h:B\to Z'$ be as in the proof of Lemma~\ref{lem:local-power-coordinates}. Let $G\subset(\mu_m)^k$ be the deck transformation group of $h$ off the coordinate divisors, where $\mu_m$ is the group of $m$-th roots of unity. Over $B\setminus D$, the fibers of $h$ are the $G$-orbits,
and $\rho$ has monodromy group $(\mu_m)^k/G$.

Let $c=(c_1,\ldots,c_k)\in\{0,\ldots,m-1\}^k$ be such that the character $(\zeta_i)\mapsto\prod_i\zeta_i^{c_i}$ is trivial on $G$. The monomial $\prod_i w_i^{c_i}$ descends off $\rho^{-1}(D)$ and extends to a holomorphic function $\varphi_c$ on $Z'$ because it is bounded. Since $\varphi_c^m=\rho^*(\prod_i z_i^{c_i})$, we obtain, by applying $\theta$ at $\xi$, that
\begin{equation}\label{eq:arc-transfer-orders}
m\,\operatorname{ord}_s\theta(\varphi_c)
=\sum_{i=1}^k c_i a_i.
\end{equation}
Let $\epsilon=\exp(2\pi\mathrm{i}/m)$. Then \eqref{eq:arc-transfer-orders} implies that $(\epsilon^{a_1},\ldots,\epsilon^{a_k})\in G$.

The monodromy along $\gamma$ is the class of this element in $(\mu_m)^k/G$ because $\operatorname{ord}_0(z_i\circ\gamma)=a_i$.
It is therefore trivial, so $\gamma$ lifts to $Z'$ on $\Delta\setminus\{0\}$. 
As in the proof of Lemma~\ref{lem:local-power-coordinates}, the lift extends across $0$ by finiteness and the removable singularity theorem. Its value there is $\xi$, the unique point of $Z'$ over $y$.
\end{proof}

\section*{Acknowledgement}
Our collaboration began during the participation of the second author to the conference "birational geometry and moduli spaces" organised by the first author in January 2026 at Xiamen University, though the work presented in this paper was mostly done later. We thank Xiamen University and other organisers for hospitality and organisation. 

R.-R. Sun is supported by an NSFC grant (No. 12501059) and a grant from Xiamen University (No. X2450215). S.-Y. Zhao acknowledges partial support from the French National Research Agency under the projects GAG (ANR-24-CE40-3526-01) and DynAtrois (ANR-24-CE40-1163).

\section*{AI usage}
The work presented in this paper originates from a section of our previous joint work~\cite{SunZhao}; our idea there was to combine the main result of \cite{SunZhao} with classification results of two-dimensional contracting germs available via valuation dynamics, to obtain a proof of Theorem~\ref{mainthm:Fatou} for $\bP^3$ under an additional tautness assumption as in \cite{SunZhao}. We used ChatGPT Astra to assemble such a three-dimensional proof, which turned out to be flawed. During the process we realized the importance of a false lemma inpired by the work of Astorg and Le (their papers have no issues). We have been correcting and upgrading that lemma, sometimes through extensive AI assistance, during every stage of our later work. It finally evolved into Section~\ref{sec:local-invariant}. At one stage we decided to use scheme-theoretic language to write these materials because of the need to play with singularities and with algebraic, holomorphic, and formal categories. Then the scheme-theoretic formalism made us realize that the use of two-dimensional germ classification or valuation dynamics is an overkill because postcritical finiteness could already provide easy constraints that may be hard for general local dynamics, and that some of the arguments can be written directly in algebro-geometric language. Then through extensive interaction with ChatGPT Astra, we develop the current Section~\ref{sec:transverse} with the formalism of symbolic powers of ideal sheaves, which finally allows our paper to work for arbitrary dimension. During the writing of the paper, the authors also realized that the use of our previous work~\cite{SunZhao} is not essential and can be replaced by more elementary arguments; this becomes now Section~\ref{sec:preparation}. We also used ChatGPT Astra for the final editorial work.

\bibliographystyle{amsplain}
\bibliography{biblio}

\providecommand{\bysame}{\leavevmode\hbox to3em{\hrulefill}\thinspace}
\providecommand{\MR}{\relax\ifhmode\unskip\space\fi MR }
\providecommand{\MRhref}[2]{%
  \href{http://www.ams.org/mathscinet-getitem?mr=#1}{#2}
}
\providecommand{\href}[2]{#2}
\begin{thebibliography}{10}

\bibitem{Astorg2020}
Matthieu Astorg, \emph{Dynamics of post-critically finite maps in higher
  dimension}, Ergodic Theory Dynam. Systems \textbf{40} (2020), no.~2,
  289--308. \MR{4048294}

\bibitem{BelkKoch2010}
James Belk and Sarah Koch, \emph{Iterated monodromy for a two-dimensional map},
  In the Tradition of Ahlfors--Bers. V, Contemporary Mathematics, vol. 510,
  American Mathematical Society, Providence, RI, 2010, pp.~1--11.

\bibitem{BertelootLoeb2001}
Fran{\c{c}}ois Berteloot and Jean-Jacques Loeb, \emph{Une caract{\'e}risation
  g{\'e}om{\'e}trique des exemples de {Latt{\`e}s} de {$\mathbb{P}^k$}},
  Bulletin de la Soci{\'e}t{\'e} Math{\'e}matique de France \textbf{129}
  (2001), no.~2, 175--188.

\bibitem{bost01}
Jean-Beno\^{i}t Bost, \emph{Algebraic leaves of algebraic foliations over
  number fields}, Publ. Math. Inst. Hautes \'{E}tudes Sci. (2001), no.~93,
  161--221. \MR{1863738}

\bibitem{BuffEpsteinKoch2012}
Xavier Buff, Adam~L. Epstein, and Sarah Koch, \emph{B{\"o}ttcher coordinates},
  Indiana University Mathematics Journal \textbf{61} (2012), no.~5, 1765--1799.

\bibitem{Chaperon2004}
Marc Chaperon, \emph{Stable manifolds and the {Perron}-{Irwin} method}, Ergodic
  Theory Dyn. Syst. \textbf{24} (2004), no.~5, 1359--1394.

\bibitem{Crass2007Klein}
Scott Crass, \emph{Solving the heptic by iteration in two dimensions: Geometry
  and dynamics under {Klein}'s group of order 168}, Journal of Modern Dynamics
  \textbf{1} (2007), no.~2, 175--203.

\bibitem{Crass2011Valentiner}
\bysame, \emph{New light on solving the sextic by iteration: An algorithm using
  reliable dynamics}, Journal of Modern Dynamics \textbf{5} (2011), no.~2,
  397--408.

\bibitem{DabijaJonsson2010}
Marius Dabija and Mattias Jonsson, \emph{Algebraic webs invariant under
  endomorphisms}, Publicacions Matem{\`a}tiques \textbf{54} (2010), no.~1,
  137--148.

\bibitem{Dupont2003}
Christophe Dupont, \emph{Exemples de {Latt{\`e}s} et domaines faiblement
  sph{\'e}riques de {$\mathbb{C}^n$}}, Manuscripta Mathematica \textbf{111}
  (2003), no.~3, 357--378.

\bibitem{ELS2001}
Lawrence Ein, Robert Lazarsfeld, and Karen~E. Smith, \emph{Uniform bounds and
  symbolic powers on smooth varieties}, Inventiones Mathematicae \textbf{144},
  241--252.

\bibitem{FornaessSibony1992}
John~Erik Forn{\ae}ss and Nessim Sibony, \emph{Critically finite rational maps
  on {$\mathbb{P}^2$}}, The Madison Symposium on Complex Analysis (Alexander
  Nagel and Edgar~Lee Stout, eds.), Contemporary Mathematics, vol. 137,
  American Mathematical Society, Providence, RI, 1992, pp.~245--260.

\bibitem{FornaessSibony}
John~Erik Fornaess and Nessim Sibony, \emph{Complex dynamics in higher
  dimension. {I}}, Complex analytic methods in dynamical systems. Proceedings
  of the congress held at Instituto de Matem\'atica Pura e Aplicada, IMPA, Rio
  de Janeiro, Brazil, January 1992, Paris: Soci{\'e}t{\'e} Math{\'e}matique de
  France, 1994, pp.~201--231.

\bibitem{GauthierHutzKaschner2023}
Thomas Gauthier, Benjamin Hutz, and Scott Kaschner, \emph{Symmetrization of
  rational maps: Arithmetic properties and families of {Latt{\`e}s} maps of
  {$\mathbb{P}^k$}}, Conformal Geometry and Dynamics \textbf{27} (2023),
  98--117.

\bibitem{GTV}
Thomas Gauthier, Johan Taflin, and Gabriel Vigny, \emph{Sparsity of
  postcritically finite maps of {{\(\mathbb{P}^k\)}} and beyond: a complex
  analytic approach}, Publ. Math., Inst. Hautes {\'E}tud. Sci. \textbf{143}
  (2026), 1--96.

\bibitem{GrauertRemmert}
Hans Grauert and Reinhold Remmert, \emph{Coherent analytic sheaves},
  Grundlehren der mathematischen Wissenschaften, vol. 265, Springer-Verlag,
  Berlin, 1984.

\bibitem{hart68}
Robin Hartshorne, \emph{Cohomological dimension of algebraic varieties}, Ann.
  of Math. (2) \textbf{88} (1968), 403--450. \MR{232780}

\bibitem{Hartshorne}
\bysame, \emph{Algebraic geometry}, Graduate Texts in Mathematics, vol.~52,
  Springer-Verlag, 1977.

\bibitem{Ingram2020MinimallyCritical}
Patrick Ingram, \emph{Minimally critical endomorphisms of {$\mathbb{P}^N$}},
  2020, Preprint; version 3 2022.

\bibitem{Ji}
Zhuchao Ji, \emph{Structure of {Julia} sets for post-critically finite
  endomorphisms on {{\(\mathbb{P}^2\)}}}, Math. Ann. \textbf{384} (2022),
  no.~1-2, 857--880.

\bibitem{Jonsson1998}
Mattias Jonsson, \emph{Some properties of {$2$}-critically finite holomorphic
  maps of {${\bf P}^2$}}, Ergodic Theory Dynam. Systems \textbf{18} (1998),
  no.~1, 171--187. \MR{1609475}

\bibitem{Koch2013}
Sarah Koch, \emph{Teichm\"{u}ller theory and critically finite endomorphisms},
  Adv. Math. \textbf{248} (2013), 573--617. \MR{3107522}

\bibitem{Le2020}
Van~Tu Le, \emph{Periodic points of post-critically algebraic holomorphic
  endomorphisms}, Ergodic Theory and Dynamical Systems \textbf{42}, no.~7,
  2382--2414.

\bibitem{Le2023}
\bysame, \emph{Periodic points of weakly post-critically finite all the way
  down maps}, Ann. Mat. Pura Appl. (4) \textbf{202} (2023), no.~3, 1187--1195.
  \MR{4576935}

\bibitem{Milnor2006}
John Milnor, \emph{Dynamics in one complex variable}, third ed., Annals of
  Mathematics Studies, vol. 160, Princeton University Press, Princeton, NJ,
  2006.

\bibitem{Rong2008}
Feng Rong, \emph{The {Fatou} set for critically finite maps}, Proceedings of
  the American Mathematical Society \textbf{136} (2008), no.~10, 3621--3625.

\bibitem{Rong2010Lattes}
\bysame, \emph{{Latt{\`e}s} maps on {$\mathbb{P}^2$}}, Journal de
  Math{\'e}matiques Pures et Appliqu{\'e}es \textbf{93} (2010), no.~6,
  636--650.

\bibitem{Shub1987}
Michael Shub, \emph{Global stability of dynamical systems. {With} the collab.
  of {Albert} {Fathi} and {Remi} {Langevin}. {Transl}. from the {French} by
  {Joseph} {Christy}}, New {York} etc.: {Springer}-{Verlag}. {XII}, 150 p. {DM}
  74.00 (1987)., 1987.

\bibitem{Sullivan}
Dennis Sullivan, \emph{Quasiconformal homeomorphisms and dynamics. {I}:
  {Solution} of the {Fatou}- {Julia} problem on wandering domains}, Ann. Math.
  (2) \textbf{122} (1985), 401--418.

\bibitem{SunZhao}
Ruiran Sun and Shengyuan Zhao, \emph{Postcritically finite endomorphisms {I}:
  Kobayashi hyperbolicity and tautness}, to appear.

\bibitem{Ueda1993}
Tetsuo Ueda, \emph{Complex dynamical systems on projective spaces}, Chaotic
  Dynamical Systems, Advanced Series in Dynamical Systems, vol.~13, World
  Scientific, 1993, pp.~120--138.

\bibitem{Ueda1998}
\bysame, \emph{Critical orbits of holomorphic maps on projective spaces},
  Journal of Geometric Analysis \textbf{8} (1998), no.~2, 319--334.

\bibitem{Ueno2007Symmetric}
Kohei Ueno, \emph{Dynamics of symmetric holomorphic maps on projective spaces},
  Publicacions Matem{\`a}tiques \textbf{51} (2007), no.~2, 333--344.

\bibitem{Zariski_normal}
Oscar Zariski, \emph{Sur la normalit\'e analytique des vari\'et\'es normales},
  Annales de l'Institut Fourier \textbf{2} (1950), 161--164.

\end{thebibliography}
\end{document}